\documentclass[12pt]{article}
\usepackage{amsmath}
\usepackage{graphicx}
\usepackage{enumerate}
\usepackage{natbib}
\usepackage{url} 
\usepackage{booktabs}
\usepackage{amsthm} 
\usepackage{nicefrac}

\usepackage{etoolbox}

\newcommand{\blind}{1}

\usepackage{amssymb}
\usepackage{xfrac} 
\usepackage{physics}
\usepackage{subcaption}
\usepackage{hyperref}
\usepackage{color}

\newtheorem{lemma}{Lemma}[section]
\counterwithin{lemma}{section}

\newtheorem{proposition}{Proposition}%
\newtheorem{remark}{Remark}%
\newtheorem{definition}{Definition}
\newtheorem{theorem}{Theorem}

\DeclareMathOperator*{\esssup}{ess\,sup}

\newcommand{\almosteverywhere}{\mathrm{a.e.}\;}

\begin{document}

\setcounter{section}{0}
\renewcommand{\thesection}{\arabic{section}}

\def\spacingset#1{\renewcommand{\baselinestretch}%
{#1}\small\normalsize} \spacingset{1}


\date{}
\if1\blind
{
  \title{\bf Numerical approximation of fractional diffusion equations on metric graphs}
  \author{
    David Bolin\thanks{The authors are listed alphabetically.}, 
    Lenin Riera-Segura\footnotemark[1], 
    and Alexandre B. Simas\footnotemark[1]\\
    Statistics Program, Computer, Electrical and Mathematical\\
    Sciences and Engineering (CEMSE) Division,\\
    King Abdullah University of Science and Technology (KAUST),\\
    Thuwal, 23955-6900, Kingdom of Saudi Arabia
  }
  \maketitle
} \fi

\if0\blind
{
  \bigskip
  \bigskip
  \bigskip
  \begin{center}
    {\LARGE\bf Numerical approximation of fractional diffusion equations on metric graphs}
\end{center}
  \medskip
} \fi

\bigskip
\begin{abstract}
    We study fractional diffusion equations on compact metric graphs, where the nonlocal dynamics is governed by fractional powers of the shifted Kirchhoff–Laplacian. Building on recent advances in the analysis of fractional operators on metric graphs, we establish a rigorous mathematical framework and propose a fully discrete scheme based on backward Euler time-stepping and finite element discretization. To approximate the action of the fractional operator, we employ rational approximations, reducing the problem to a sequence of sparse elliptic solves for efficient implementation. We derive error estimates for the temporal, spatial, and rational discretizations, and confirm convergence through numerical experiments.
\end{abstract}

\noindent%
{\it Keywords:} fractional Sobolev spaces, fractional Laplacian, shifted Kirchhoff--Laplacian, finite elements, analytic semigroup, rational approximation

\spacingset{1} 

\section{Introduction}
\label{sec:introduction}
In recent years, there has been growing interest in dynamical processes on networks \citep{Cordoni2017Stochasticreactiondiffusion}, driven by applications ranging from disease spread \citep{Pastor2015Epidemicprocess} and information propagation in social networks \citep{Du2018PDE} to gas pipeline systems \citep{Herty2010Anewmodelforgasflow}, among others, where the underlying dynamics are often modeled by partial differential equations (PDEs) \citep{bottcher2024dynamicalprocessesmetricnetworks}. These include a broad range of physical models, such as Schrödinger, wave, and Klein--Gordon equations \citep{Dutykh2018Wave, Goloshchapova2021Anonlinear, Goodman2019NLSbifurcations, Goodman2025QGlAB}, as well as stochastic space--time models \citep{Cordoni2017Stochasticreactiondiffusion, Kovacs2021Stochastics}. A fundamental class of such dynamics is diffusion, which serves as a core model for the propagation of mass, energy, or information across networked systems \citep{Barrat2008Dynamicalprocess, Jivkov2014Anetworkmodel}. Parabolic PDEs, in particular, frequently arise in this setting as they effectively capture the temporal evolution of diffusive processes on networks. For instance, Weller \citep{kups73182} employed parabolic equations on metric graphs to simulate the distribution of tau proteins in the brain's structural network, offering insights into the progression of Alzheimer’s disease. See \citep[Chapter~11]{Danko2017Modelelements} and \citep{Pastor2015Epidemicprocess, Raj2012Anetworkdiffusionmodel} for additional examples of diffusion modeling on networks. 

However, in many real-world scenarios, classical diffusion fails to capture the observed dynamics, especially when long-range interactions are present \citep{Daoud2025classp, Daoud2024Aclass}. This has led to increasing interest in fractional diffusion, where the standard diffusion operator is replaced by its fractional counterpart to better reflect anomalous transport phenomena \citep{Vazquez2017Themathematical}. For further discussion on anomalous diffusion across different domains, see \citep{Henry2006Anomalous, Miller2006Onthecontrollability, Riascos2014Fractional, Somathilake2018Aspacefractional} and the references therein.  An important application of fractional diffusion arises in the context of metric graphs, where it has become a powerful modeling tool for complex systems in which both anomalous transport and the underlying network topology significantly influence the dynamics. Central to these formulations is the fractional Laplacian, which serves as the primary operator governing the nonlocal diffusion dynamics. To date, studies of fractional diffusion on metric graphs have primarily focused on time-fractional models \citep{Faheem2023Acollocation, Kumari2025Finite, Mehandiratta2021Optimalcontrol}. Existing works on space-fractional diffusion are comparatively few \citep{Leugering2023Optimalcontrol} and, together with the time-fractional studies, have been restricted to relatively simple graph topologies, such as star graphs. In contrast, the present work considers space-fractional diffusion on general compact metric graphs and, to the best of our knowledge, provides the first numerical analysis for this class of problems.

Several recent works have advanced the theoretical and numerical analysis of fractional diffusion problems, including studies on well-posedness, controllability, and discretization techniques \citep{Ainsworth2018Towards, Antil2016Aspacetime, Bonito2017Theapproximation, Bonito2018Numericalmethods, Daoud2025classp, Daoud2024Aclass, Miller2006Onthecontrollability, Nochetto2016APDEapproach, Riascos2014Fractional, Somathilake2018Aspacefractional, Vazquez2017Themathematical, kups73182}. Among these, Glusa and Otárola \citep{Glusa2021errorestimates} addressed a control-constrained linear-quadratic optimal control problem for the fractional heat equation on Euclidean domains, introducing a numerical method based on an implicit finite difference discretization in time with a piecewise linear finite element discretization in space. Inspired by their work, we consider the associated state equation but now on a metric graph, and refine the numerical approximation by incorporating a rational approximation of the fractional power on top of the standard time and space discretizations. Specifically, we analyze and numerically approximate solutions to fractional diffusion equations on metric graphs of the form
\begin{equation}
\label{eq:maineq}
\left\{
\begin{aligned}
    \partial_t u(s,t) + (\kappa^2 - \Delta_\Gamma)^{\sfrac{\alpha}{2}} u(s,t) &= f(s,t), && \quad (s,t) \in \Gamma \times (0, T), \\
    u(s,0) &= u_0(s), && \quad s \in \Gamma,
\end{aligned}
\right.
\end{equation}
with $u(\cdot,t)$ satisfying the Kirchhoff vertex conditions
\begin{equation}
\label{eq:Kcond}
   \mathcal{K} =  \left\{\phi\in C(\Gamma)\cap \textstyle\bigoplus_{e\in\mathcal{E}} C^{1}(e)\;\middle|\; \forall v\in \mathcal{V}:\; \textstyle\sum_{s\in v}\partial \phi(s)=0 \right\}.
\end{equation}
Here $\Gamma = (\mathcal{V},\mathcal{E})$ is a compact metric graph, $\kappa>0$, $\alpha\in(0,2]$ determines the smoothness of $u(\cdot,t)$, $\Delta_{\Gamma}$ is the so-called Kirchhoff--Laplacian. Details will be provided in later sections. Further, $f:\Gamma\times (0,T)\to\mathbb{R}$ on the right-hand side and $u_0: \Gamma \to \mathbb{R}$ determining the initial condition are fixed functions.  Recent work by Bolin et al. \citep{bolin2023regularity} laid the foundation for the study of fractional elliptic equations on metric graphs, establishing well-posedness and regularity under Kirchhoff vertex conditions. By exploiting the spectral structure of the Kirchhoff--Laplacian and introducing fractional Sobolev spaces tailored to graph domains, they developed a rigorous functional framework. These contributions provide essential mathematical tools and pave the way for the analysis of fractional parabolic equations. In this work, we build upon these results to model fractional diffusion on metric graphs.

Turning back to the nonlocal nature of the fractional Laplacian, which encapsulates long-range interactions across the domain, we note that it gives rise to  unique analytical and computational challenges \citep{Antil2016Aspacetime}. On bounded domains in Euclidean space, several definitions of the fractional Laplacian coexist, each reflecting the operator’s nonlocal behavior in different ways and leading to distinct theoretical frameworks and specialized numerical methods. The integral definition, based on singular kernels and extensions by zero outside the domain, naturally leads to dense system matrices, which require specialized techniques such as matrix compression or hierarchical solvers to achieve computational efficiency~\citep{Ainsworth2018Towards, Bonito2018Numericalmethods, Glusa2021errorestimates}. An alternative approach is based on the spectral definition, where the fractional Laplacian is defined via eigenfunction expansions of the Dirichlet Laplacian. This definition enables the use of the Caffarelli–Silvestre extension~\citep{Caffarelli2007Extension} adapted to bounded domains, which reformulates the fractional Laplacian as a local, degenerate elliptic operator in a higher-dimensional space. The resulting extension problem is typically discretized using finite element methods on anisotropically graded meshes to resolve singularities near the boundary~\citep{Bonito2018Numericalmethods, Nochetto2016APDEapproach}. In the context of compact metric graphs, however, the geometry and topology differ from those of Euclidean domains, and the analytical and numerical techniques developed for these Euclidean-based formulations do not transfer directly to the graph setting. Building on the functional framework introduced by Bolin et al.~\citep{bolin2023regularity}, we instead adopt a spectral definition based on the eigenpairs of the shifted Kirchhoff--Laplacian $L=\kappa^2-\Delta_\Gamma$, whose domain naturally encodes the graph topology and the Kirchhoff vertex conditions $\mathcal{K}$. Fractional powers of the operator are approximated using rational functions, reducing the problem to a sequence of standard elliptic solves. All solves are performed using sparse matrices, preserving sparsity throughout the computations. This yields a sparse, efficient, and scalable method that avoids the complexities of domain extensions, nonlocal kernel evaluations, or matrix compression techniques.

The remainder of the paper is organized as follows. In Section~\ref{notandpre}, we introduce the notation and the theoretical framework, including the structure of compact metric graphs, function spaces, spectral properties of the shifted Kirchhoff--Laplacian. 
Section~\ref{sec:wellposedness} addresses the well-posedness of the problem~\eqref{eq:maineq}, proving existence, uniqueness, and regularity of solutions. In Section~\ref{numericalapproxsec}, we introduce a fully discrete numerical scheme combining backward Euler time-stepping with a finite element spatial discretization. 
These results are used to derive error estimates in time and space in Section~\ref{timeandspaceerroranalysis}. Section~\ref{rationalapproximationsection} introduces a rational approximation strategy for efficiently approximating the action of the fractional operator $L^{-\sfrac{\alpha}{2}}$ and provides error estimates that quantify the impact of this approximation on the overall numerical solution. Section~\ref{numericalimplementationsec} presents the numerical implementation and convergence results, supported by illustrative examples. The computations were performed using the \texttt{MetricGraph}~\citep{MetricGraphRpackage} and \texttt{rSPDE}~\citep{rSPDERpackage} packages within the \texttt{R}~\citep{Rsoftware} software environment. Finally, Section~\ref{conclusion} offers concluding remarks and discusses potential directions for future research. Code for reproducing all results is available at \texttt{https://github.com/leninrafaelrierasegura/NAFDEMG}.

\section{Notation and Preliminaries}
\label{notandpre}
%
Throughout the article, $\Gamma$ denotes a compact metric graph, defined as a combinatorial graph $\Gamma = (\mathcal{V}, \mathcal{E})$ equipped with a metric $d(\cdot,\cdot)$, where the set of vertices $\mathcal{V}=\{v\}$ is finite and the set of undirected edges $\mathcal{E} = \{e\}$ are rectifiable curves with positive finite length $\ell_e$. 
We assume that $\Gamma$ is connected and that the metric $d$ corresponds to the geodesic (shortest path) distance. For each vertex $v\in\mathcal{V}$, let $\mathcal{E}_{v}$ denote the collection of edges incident to $v$, and let $L_v=\{e\in\mathcal{E}_v:\; e \text{ is a loop}\}$. The degree of $v$ is defined by $\deg(v)=|\mathcal{E}_v|+|L_v|$. A location $s\in\Gamma$ is a point on some edge $e\in\mathcal{E}$ and may be expressed by the ordered pair $(e,t)$, $t\in[0,\ell_e]$, called a coordinate representation of $s$. A vertex $v\in\mathcal{V}$ admits $\deg(v)$ distinct coordinate representations, whereas any point $s\in\Gamma\setminus\mathcal{V}$ has a unique coordinate representation. We write $s\in v$ to indicate $s$ is a coordinate representation of the vertex $v$. A real-valued function $f$ on $\Gamma$ is specified as a family $\{f_e\}_{e\in\mathcal{E}}$ with edgewise-defined components $f_e:[0,\ell_e]\to\mathbb{R}$ satisfying  $f_e = f|_e$. 

We denote the space of continuous functions on $\Gamma$ by $C(\Gamma)$ and equip it with the supremum norm $\|f\|_{C(\Gamma)} = \sup_{s \in \Gamma} |f(s)| $. We denote the space of essentially bounded functions by $L^\infty(\Gamma)$ and equip it with the norm $\|f\|_{L^\infty(\Gamma)} = \esssup_{s \in \Gamma} |f(s)|$ defined as $\esssup_{s \in \Gamma} |f(s)| := \inf \{ M \geq 0 : \lambda(\{s \in \Gamma : |f(s)| > M\}) = 0 \}$. Here $ \lambda $ is the Lebesgue measure on $ \Gamma $, given by $ \lambda(A) = \sum_{e \in \mathcal{E}} \lambda_e(A \cap e) $, with $ \lambda_e $ being the Lebesgue measure on each edge $ e $, identified with a compact interval. From now on, we write $X(e)$ to denote a standard function space $X$ (e.g., $L_2$, $H^k$) on the interval $[0,\ell_e]$. For example, $C^1(e)$ in \eqref{eq:Kcond} denotes the space of continuously differentiable functions on $[0,\ell_e]$. The space of square-integrable functions on $\Gamma$, denoted by $L_2(\Gamma)$, consists of functions whose edgewise components are square-integrable functions. It is equipped with the inner product  $(f,g)_{L_2(\Gamma)} = \int_\Gamma f(s)g(s)\dd s = \sum_{e\in\mathcal{E}}\int_{e}f_{e}(s)g_{e}(s)\dd s$ and the corresponding norm $\norm{f}^2_{L_2(\Gamma)} = \sum_{e\in\mathcal{E}}\norm{f_{e}}^2_{L_2(e)}$. The Sobolev space $ H^1(\Gamma) $ is defined as $ \{f \in C(\Gamma) : \|f\|^2_{H^1(\Gamma)} < \infty \} = C(\Gamma) \cap \bigoplus_{e \in \mathcal{E}} H^1(e)$ and is equipped with the norm $\|f\|^2_{H^1(\Gamma)} = \sum_{e \in \mathcal{E}} (\|f'_{e}\|^2_{L_2(e)}+\|f_{e}\|^2_{L_2(e)})$, where $f_{e}'$ denotes the weak derivative of $ f_{e} $, characterized as the unique function in $L_2(e)$ such that  $f_e(x) = f_e(0) + \int_0^x f_e'(s)\dd s$ for all $x \in [0, \ell_{e}]$. This characterization implies that any edgewise component $ f_e \in H^1(e) $  admits a continuous representative. For $k\in\mathbb{N}$, we define the decoupled Sobolev space of order $k$ as  $\widetilde{H}^k(\Gamma) = \bigoplus_{e \in \mathcal{E}} H^k(e)$, with norm $\|f\|^2_{\widetilde{H}^k(\Gamma)} = \sum_{e \in \mathcal{E}} \|f_{e}\|^2_{H^k(e)}$. In particular, if $ f \in \widetilde{H}^2(\Gamma) $, then each $ f_e' $ can be identified with a continuous function.  However, functions in $\widetilde{H}^k(\Gamma)$ are not necessarily globally continuous across $\Gamma$. To ensure global continuity, we define the globally continuous Sobolev space $\widetilde{H}_C^k(\Gamma) = \widetilde{H}^k(\Gamma) \cap C(\Gamma)$. In particular, $ H^1(\Gamma) = \widetilde{H}^1(\Gamma) \cap C(\Gamma) = \widetilde{H}_C^1(\Gamma) $. To encode both second-order edgewise regularity and physically meaningful vertex conditions (namely, continuity and flux conservation), we introduce the space  $K(\Gamma) = \{f \in \widetilde{H}^2_C(\Gamma) : f\in \mathcal{K}\}$, where $\mathcal{K}$, as defined in \eqref{eq:Kcond}, denotes the set of functions satisfying the Kirchhoff conditions at each vertex. For a function $f\in\widetilde{H}^2(\Gamma)$, the directional derivative $ \partial f(s)$ at $s\in v$ along an edge $ e =[0,\ell_e]$ is defined as $ \partial f(s) = f_e'(0) $ if $ s=(e,0) $ and $ \partial f(s) = -f_e'(\ell_e) $ if $ s=(e, \ell_e) $. 

We now briefly recall the relevant interpolation theory. See \citep{ChandlerWilde2015interpolation} for more details. If $ E \subset F $ are two Hilbert spaces, we denote the real interpolation space of order $ s \in (0, 1) $ between $ F $ and $ E $ by $ (F, E)_s $. Using this, we define the fractional Sobolev spaces as $H^s(\Gamma) := (L_2(\Gamma), H^1(\Gamma))_s$ for $0 < s < 1$ and $H^s(\Gamma) := (H^1(\Gamma), \widetilde{H}_C^2(\Gamma))_{s-1}$ for $1 < s < 2$. Given two Hilbert spaces $(E, \norm{\cdot}_E)$ and $(F, \norm{\cdot}_F)$, we write $E\hookrightarrow F$ to indicate that $E$ is continuously embedded in $F$, i.e., $E\subset F$ and $\norm{\cdot}_{F}\lesssim \norm{\cdot}_{E}$. Here and in the sequel, the notation $A\lesssim B$ means $A\leq CB$ for some constant $C>0$, which may vary from line to line and is not of essential importance (e.g., it does not depend on discretization parameters). Given a function $\phi:\Gamma\times(0,T)\to\mathbb{R}$, the notation $\phi(t)$ refers to the function $\phi:(0,T)\ni t\mapsto \phi(t) := \phi(\cdot, t)\in X$, where $X$ is some Banach space. Additionally, for notational convenience, expressions such as $(\phi,\phi)_{L_2(\Gamma)}$ and $(\phi(t),\phi(t))_{L_2(\Gamma)}$ are shorthand for $(\phi(\cdot,t),\phi(\cdot,t))_{L_2(\Gamma)}$ for $\almosteverywhere t\in(0,T)$. We adopt the convention that matrices are denoted by bold capital letters (e.g., $\mathbf{A}$), their columns by single subscripts (e.g., $\mathbf{A}_k$), and their entries by double subscripts (e.g., $\mathbf{A}_{i,j}$).

The Kirchhoff--Laplacian $\Delta_{\Gamma}: D(\Delta_{\Gamma}) = K(\Gamma)\subset L_2(\Gamma)\to L_2(\Gamma)$ introduced in \eqref{eq:maineq} is defined as a second-order differential operator acting edgewise on functions in $K(\Gamma)$. That is, $ (\Delta_{\Gamma} f)|_e := f_e''$ for $f\in D(\Delta_{\Gamma})$. This domain ensures sufficient smoothness on each edge, continuity across the graph, and compatibility with Kirchhoff vertex conditions, making $\Delta_{\Gamma}$ a self-adjoint operator. For $\kappa>0$, we define the operator $L:D(L) = D(\Delta_{\Gamma})\to L_2(\Gamma)$ as the shifted Kirchhoff--Laplacian $L =\kappa^2 -\Delta_{\Gamma}$, which is densely-defined in $L_2(\Gamma)$, self-adjoint, and positive definite with a compact inverse, ensuring a well-posed spectral decomposition \citep{berkolaiko2013introduction, bolin2024gaussian}. Therefore, there exists a complete orthonormal system  $\{e_j\}_{j=1}^\infty$ on $L_2(\Gamma)$, with corresponding eigenvalues $\{\lambda_j\}_{j=1}^\infty$, that diagonalizes operator $L$. Moreover, for $\alpha>0$, the fractional operator $L^{\sfrac{\alpha}{2}}$  and inverse fractional operator $L^{-\sfrac{\alpha}{2}}$ are well-defined in the spectral sense. Specifically, the fractional operator $L^{\sfrac{\alpha}{2}} : D(L^{\nicefrac{\alpha}{2}}) \longmapsto L_2(\Gamma)$ is defined via $\phi \longmapsto L^{\nicefrac{\alpha}{2}}\phi = \sum_{j\in\mathbb{N}}\lambda_j^{\nicefrac{\alpha}{2}}(\phi, e_j)_{L_2(\Gamma)}e_j$, with domain $ D(L^{\nicefrac{\alpha}{2}}) = \dot{H}^{\alpha}(\Gamma) = \{\phi\in L_2(\Gamma): \norm{\phi}_{\dot{H}^{\alpha}}<\infty\}$, which is a Hilbert space  with inner product $(\phi,\psi)_\alpha = (L^{\nicefrac{\alpha}{2}} \phi, L^{\nicefrac{\alpha}{2}} \psi)_{L_2(\Gamma)}$ and induced norm $\|\phi\|_{\dot{H}^{\alpha}(\Gamma)}^2 = \|L^{\nicefrac{\alpha}{2}} \phi\|^2_{L_2(\Gamma)} = \sum_{j\in\mathbb{N}}\lambda_j^{\alpha}(\phi, e_j)^2_{L_2(\Gamma)}$. Using the fact that finite linear combinations of the eigenfunctions $\{e_j\}_{j=1}^\infty$ are dense in $L_2(\Gamma)$ and the Borel functional calculus from the spectral theorem applies, one can show that $L^{\sfrac{\alpha}{2}}$ inherits all the key properties of $L$. In particular, $L^{\sfrac{\alpha}{2}}$ is self-adjoint, densely-defined and bounded below. This implies that it is sectorial, and according to \citep[Theorem~1.3.4.]{Henry1981} $-L^{\sfrac{\alpha}{2}}$ is the infinitesimal generator of a compact analytic semigroup $T(t) :=\{\mathrm{e}^{-L^{\sfrac{\alpha}{2}}t}\}_{t\geq 0}$ of uniformly bounded linear operators on $L_2(\Gamma)$. Using the spectral decomposition, the semigroup has representation
\begin{align}
\label{spectral_semigroup}
   \textstyle T(t)\phi = \sum_{j\in\mathbb{N}}\mathrm{e}^{-\lambda^{\sfrac{\alpha}{2}}_jt}\left(\phi, e_j\right)_{L_2(\Gamma)}e_j,
\end{align}
and satisfies the contraction property $\norm{T(t)\phi}_{L_2(\Gamma)}\leq \norm{\phi}_{L_2(\Gamma)}$ for $t\geq0$.

We conclude the preliminaries by introducing the dual space of $\dot{H}^{\alpha}(\Gamma)$ as $\dot{H}^{-\alpha}(\Gamma)$ and endow it with the norm $\norm{\phi}_{\dot{H}^{-\alpha}(\Gamma)}^2  = \sum_{j\in\mathbb{N}}\lambda_j^{-\alpha}\langle\phi, e_j\rangle^2$, where $\langle\cdot,\cdot\rangle$ denotes the duality pairing between $\dot{H}^{-\alpha}(\Gamma)$ by $\dot{H}^{\alpha}(\Gamma)$. For $0\leq \alpha\leq \beta$, we have the continuous embeddings $\dot{H}^{\beta}(\Gamma)\hookrightarrow\dot{H}^{\alpha}(\Gamma)\hookrightarrow L_2(\Gamma) \hookrightarrow \dot{H}^{-\alpha}(\Gamma)\hookrightarrow\dot{H}^{-\beta}(\Gamma)$. Moreover, according to \citep[Lemma~2.1]{bolin2020numerical}, operator $L^{\nicefrac{\alpha}{2}}$ has a unique bounded extension to an isometric isomorphism $L^{\nicefrac{\alpha}{2}}: \dot{H}^{r}(\Gamma)\to \dot{H}^{r-\alpha}(\Gamma)$ for any $r\in\mathbb{R}$. This extension will play a crucial role in establishing the regularity properties of solutions to elliptic problems.
\section{Well-posedness}
\label{sec:wellposedness}
This section presents the regularity properties of the associated elliptic problem and the well-posedness of the fractional diffusion problem.
\subsection{Elliptic regularity}
Let $\alpha>0$ and consider the elliptic problem
\begin{align}
    \label{eq:elliptic_problem}
    L^{\sfrac{\alpha}{2}} u(s)=f(s),\quad s\in\Gamma,
\end{align}
where $u$ satisfies the Kirchhoff vertex conditions \eqref{eq:Kcond}. Let $f\in\dot{H}^{-\sfrac{\alpha}{2}}(\Gamma)$. A weak formulation for \eqref{eq:elliptic_problem} reads as follows: Find $u\in\dot{H}^{\sfrac{\alpha}{2}}(\Gamma)$ such that
\begin{align}
\label{weak_ellliptic_pro}
    \mathfrak{a}_{\alpha}(u, v) = \langle f,v\rangle,\quad \forall v\in \dot{H}^{\sfrac{\alpha}{2}}(\Gamma),
\end{align}
where the bilinear form $\mathfrak{a}_{\alpha}:\dot{H}^{\sfrac{\alpha}{2}}(\Gamma)\times \dot{H}^{\sfrac{\alpha}{2}}(\Gamma)\to \mathbb{R}$ is defined by $\mathfrak{a}_{\alpha}(\phi,\psi) := (L^{\sfrac{\alpha}{4}}\phi,L^{\sfrac{\alpha}{4}}\psi)_{L_2(\Gamma)}$ for $\phi,\psi\in\dot{H}^{\sfrac{\alpha}{2}}(\Gamma)$. Because $|\mathfrak{a}_{\alpha}(\phi,\psi)|\leq \|\phi\|_{\dot{H}^{\sfrac{\alpha}{2}}(\Gamma)}\|\psi\|_{\dot{H}^{\sfrac{\alpha}{2}}(\Gamma)}$ and $\mathfrak{a}_{\alpha}(\phi,\phi) = \|\phi\|^2_{\dot{H}^{\sfrac{\alpha}{2}}(\Gamma)}$, the bilinear form $\mathfrak{a}_{\alpha}(\cdot,\cdot)$ is continuous and coercive on $\dot{H}^{\sfrac{\alpha}{2}}(\Gamma)$. Therefore, the Lax--Milgram theorem ensures the well-posedness of problem \eqref{weak_ellliptic_pro}, guaranteeing the existence and uniqueness of the solution $u\in \dot{H}^{\sfrac{\alpha}{2}}(\Gamma)$ for any $f \in \dot{H}^{-{\sfrac{\alpha}{2}}}(\Gamma)$. However, under stronger regularity assumptions on the right-hand side, we obtain improved regularity of the solution, as stated below.
\begin{proposition}
\label{proposition:elliptic_regularity}
    Let $\alpha>0$ and $r\in\mathbb{R}$. If $f\in\dot{H}^{r}(\Gamma)$, then the solution $u$ to problem \eqref{eq:elliptic_problem} belongs to $\dot{H}^{\alpha+r}(\Gamma)$. Moreover, we have $\norm{u}_{\dot{H}^{\alpha+r}(\Gamma)} = \norm{f}_{\dot{H}^{r}(\Gamma)}$.
\end{proposition}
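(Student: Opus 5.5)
The plan is to identify the solution of \eqref{eq:elliptic_problem} with $u = L^{-\sfrac{\alpha}{2}} f$ and then read off the norm identity from the isometry property \citep[Lemma~2.1]{bolin2020numerical} quoted at the end of Section~\ref{notandpre}. By that lemma, $L^{\sfrac{\alpha}{2}}$ extends to an isometric isomorphism $\dot{H}^{\alpha+r}(\Gamma)\to\dot{H}^{r}(\Gamma)$ for every $r\in\mathbb{R}$. Hence, for $f\in\dot{H}^{r}(\Gamma)$ there is a unique $\tilde u\in\dot{H}^{\alpha+r}(\Gamma)$ with $L^{\sfrac{\alpha}{2}}\tilde u = f$. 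Moreover, $\|\tilde u\|_{\dot{H}^{\alpha+r}(\Gamma)}=\|f\|_{\dot{H}^{r}(\Gamma)}$.

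Concretely, in spectral coefficients $\tilde u = \sum_j \lambda_j^{-\sfrac{\alpha}{2}} f_j e_j$, where $f_j=\langle f,e_j\rangle$ (or $(f,e_j)_{L_2(\Gamma)}$ when $r\ge 0$). The norm identity is then immediate:
\begin{align*}
\|\tilde u\|^2_{\dot{H}^{\alpha+r}(\Gamma)}=\sum_j\lambda_j^{\alpha+r}\lambda_j^{-\alpha}f_j^2=\|f\|^2_{\dot{H}^{r}(\Gamma)}.
\end{align*}

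The remaining step is to show that $\tilde u$ coincides with "the solution" $u$ of \eqref{eq:elliptic_problem} in whatever sense the problem is meaningful for that $r$. There are two cases.

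\emph{Case $r\ge -\sfrac{\alpha}{2}$.} Here $f\in\dot{H}^{-\sfrac{\alpha}{2}}(\Gamma)$ by the embeddings, so the Lax--Milgram solution $u\in\dot{H}^{\sfrac{\alpha}{2}}(\Gamma)$ of \eqref{weak_ellliptic_pro} exists. We check that $\tilde u\in\dot{H}^{\alpha+r}(\Gamma)\hookrightarrow\dot{H}^{\sfrac{\alpha}{2}}(\Gamma)$ satisfies \eqref{weak_ellliptic_pro}. It suffices to test with $v=e_k$, since finite combinations of eigenfunctions are dense in $\dot{H}^{\sfrac{\alpha}{2}}(\Gamma)$ and both sides are continuous in $v$. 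This gives
\begin{align*}
\mathfrak{a}_\alpha(\tilde u,e_k)=\lambda_k^{\sfrac{\alpha}{2}}(\tilde u,e_k)_{L_2(\Gamma)}=f_k=\langle f,e_k\rangle.
\end{align*}
Uniqueness in Lax--Milgram then yields $u=\tilde u$.

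\emph{Case $r<-\sfrac{\alpha}{2}$.} Here the weak formulation is not available. I would interpret the solution as the solution in the extended sense $L^{\sfrac{\alpha}{2}}u=f$ in $\dot{H}^{r}(\Gamma)$. Uniqueness then follows from the injectivity of the extended isomorphism.

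The main obstacle is this identification step, i.e., making precise that the extended operator agrees with the one appearing in the weak formulation. This needs the duality pairing $\langle\cdot,\cdot\rangle$ to be consistent with the coefficient representation $f_j=\langle f,e_j\rangle$ used in defining the negative norms. I would handle it through the density of eigenfunction spans in all $\dot{H}^{s}(\Gamma)$ together with the continuity of $\mathfrak{a}_\alpha$. Everything else is a direct coefficient computation.
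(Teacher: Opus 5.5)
Your proposal is correct and follows essentially the same route as the paper: both set $u=L^{-\sfrac{\alpha}{2}}f$ and read off membership in $\dot{H}^{\alpha+r}(\Gamma)$ and the norm identity directly from the isometric isomorphism $L^{\sfrac{\alpha}{2}}:\dot{H}^{\alpha+r}(\Gamma)\to\dot{H}^{r}(\Gamma)$ of \citep[Lemma~2.1]{bolin2020numerical}. Your extra check that this $u$ coincides with the Lax--Milgram solution of \eqref{weak_ellliptic_pro} when $r\geq-\sfrac{\alpha}{2}$ is sound; it tests against eigenfunctions and uses density, and the paper leaves this identification implicit.
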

\begin{proof}
    Since $L^{\nicefrac{\alpha}{2}}$ is an isometric isomorphism from $\dot{H}^{\alpha+r}(\Gamma)$ to $ \dot{H}^{r}(\Gamma)$ for every $r\in\mathbb{R}$, we have that for every $f\in\dot{H}^{r}(\Gamma)$, 
    $u = L^{-\nicefrac{\alpha}{2}} f$ belongs to $\dot{H}^{\alpha+r}(\Gamma)$. In particular, $L^{\nicefrac{\alpha}{2}} u$ is well-defined as an element in $\dot{H}^r(\Gamma)$, and we have
    \begin{align*}
        \norm{u}_{\dot{H}^{\alpha+r}(\Gamma)} = \|L^{\nicefrac{\alpha}{2}}u\|_{\dot{H}^{r}(\Gamma)}= \norm{f}_{\dot{H}^{r}(\Gamma)}.
    \end{align*}
\end{proof}
Let $\mathbb{U} = \{\phi\in L_2(0,T;\dot{H}^{\sfrac{\alpha}{2}}(\Gamma)):\partial_t\phi\in L_2(0,T;\dot{H}^{-\sfrac{\alpha}{2}}(\Gamma))\}$. We define a weak solution of \eqref{eq:maineq} as follows.
\begin{definition}
    A function $u\in\mathbb{U}$ is called a \emph{weak solution} of \eqref{eq:maineq} if 
    \begin{equation}
\label{weakformofentireequation}
    \left\{
\begin{aligned}
    \langle\partial_tu,\phi\rangle + \mathfrak{a}_{\alpha}(u,\phi) &= \langle f,\phi\rangle,\quad\forall \phi\in \dot{H}^{\sfrac{\alpha}{2}}(\Gamma),\quad \almosteverywhere t\in(0,T),\\  
    u(0)&=u_0.
\end{aligned}
\right.
\end{equation}
\end{definition}
By the Lions--Magenes theorem applied to the Gelfand triple $\dot{H}^{\sfrac{\alpha}{2}}(\Gamma)\hookrightarrow L_2(\Gamma) \hookrightarrow \dot{H}^{-\sfrac{\alpha}{2}}(\Gamma)$, every $u\in\mathbb{U}$ admits a representative in $C([0,T];L_2(\Gamma))$ \citep{Evans2010Partial, Lions1972Nonhomogeneous}, so the initial condition $u(0)=u_0$ is well-defined.
\subsection{Existence and Uniqueness}
\label{existencesection}
To establish well-posedness, we rewrite \eqref{eq:maineq} as an abstract Cauchy problem in $L_2(\Gamma)$, given by
\begin{equation}
\label{eq:cauchyproblem}
    \left\{
\begin{aligned}
    \partial_tu(t)+L^{\sfrac{\alpha}{2}}u(t)&=f(t),\quad t\in (0, T),\\  
    u(0)&=u_0\in D(L^{\sfrac{\alpha}{2}}).
\end{aligned}
\right.
\end{equation}
The semigroup generated by $-L^{\sfrac{\alpha}{2}}$ provides the natural framework for analyzing \eqref{eq:maineq}. The next theorem gives the corresponding well-posedness result and identifies the mild solution with the weak solution introduced above.
\begin{theorem}
\label{theorem:existenceuniquenessandregularity}
    Let $\alpha>0$. If $u_0\in\dot{H}^{\alpha}(\Gamma)$ and $f\in L_2(0,T;L_2(\Gamma))$, then \eqref{eq:cauchyproblem} has a unique mild solution given by 
\begin{equation}
\label{eq:mildsolution}
        u(t) = T(t)u_0 + \int_0^t T(t-r)f(r)\dd r,\quad t\geq 0.
\end{equation}
Moreover, $u\in L_2(0,T;\dot{H}^{\alpha}(\Gamma))\cap H^1(0,T;L_2(\Gamma))$ and
\begin{align}
\label{ineq:boundonu}
\norm{u}_{L_2(0,T;\dot{H}^{\alpha}(\Gamma))}^2 +\norm{\partial_tu}_{L_2(0,T;L_2(\Gamma))}^2\lesssim \norm{u_0}_{\dot{H}^{\alpha}(\Gamma)}^2 + \norm{f}_{L_2(0,T;L_2(\Gamma))}^2.
\end{align}
Finally, the mild solution is the unique weak solution of
\eqref{weakformofentireequation}.
\end{theorem}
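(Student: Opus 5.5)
The plan is to work entirely in the eigenbasis $\{e_j\}$ of $L$, where everything reduces to scalar ODEs. Write $u_{0,j}=(u_0,e_j)_{L_2(\Gamma)}$, $f_j(t)=(f(t),e_j)_{L_2(\Gamma)}$ and $\mu_j=\lambda_j^{\sfrac{\alpha}{2}}\geq \lambda_1^{\sfrac{\alpha}{2}}>0$. By the spectral representation \eqref{spectral_semigroup}, the mild solution \eqref{eq:mildsolution} has coefficients
\begin{equation*}
u_j(t)=\mathrm{e}^{-\mu_j t}u_{0,j}+\int_0^t \mathrm{e}^{-\mu_j(t-r)}f_j(r)\dd r .
\end{equation*}
This is the unique absolutely continuous solution of $u_j'+\mu_j u_j=f_j$ with $u_j(0)=u_{0,j}$. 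Existence of the mild solution as an element of $C([0,T];L_2(\Gamma))$ follows from the contraction property of $T(t)$ and Bochner integrability of $r\mapsto T(t-r)f(r)$, since $f\in L_2(0,T;L_2(\Gamma))\subset L_1(0,T;L_2(\Gamma))$. Uniqueness of the mild solution is immediate because it is given by an explicit formula.

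\textbf{Energy estimate.} Multiply the scalar ODE by $\mu_j u_j$ and integrate over $(0,T)$:
\begin{equation*}
\tfrac12\mu_j u_j(T)^2+\mu_j^2\|u_j\|_{L_2(0,T)}^2=\tfrac12\mu_j u_{0,j}^2+\int_0^T f_j\,\mu_j u_j\dd t .
\end{equation*}
Applying Young's inequality to the last term gives $\mu_j^2\|u_j\|^2_{L_2(0,T)}\leq \mu_j u_{0,j}^2+\|f_j\|^2_{L_2(0,T)}$. Since $\mu_j\lesssim \mu_j^2=\lambda_j^{\alpha}$ (because $\mu_j\geq\lambda_1^{\sfrac{\alpha}{2}}$), the first term is bounded by $\lambda_j^\alpha u_{0,j}^2$ up to a constant. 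The ODE itself then gives $\|u_j'\|_{L_2(0,T)}\leq \|f_j\|_{L_2(0,T)}+\mu_j\|u_j\|_{L_2(0,T)}$. Summing over $j$ and using Parseval, $\sum_j\lambda_j^\alpha\|u_j\|^2_{L_2(0,T)}=\|u\|^2_{L_2(0,T;\dot H^\alpha(\Gamma))}$ and $\sum_j\|f_j\|_{L_2(0,T)}^2=\|f\|^2_{L_2(0,T;L_2(\Gamma))}$, which yields \eqref{ineq:boundonu}.

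\textbf{Main obstacle.} The delicate point is justifying that $\partial_t u$ exists in $L_2(0,T;L_2(\Gamma))$ and equals $\sum_j u_j'e_j$. I would handle this through Galerkin truncations $u^N=\sum_{j\le N}u_je_j$. The per-mode bounds show that $\{u^N\}$ and $\{\partial_t u^N\}$ are Cauchy in $L_2(0,T;\dot H^\alpha(\Gamma))$ and $L_2(0,T;L_2(\Gamma))$ respectively, since tails of convergent series go to zero. The limit is therefore in $H^1(0,T;L_2(\Gamma))$, its weak derivative is the limit of $\partial_t u^N$, and it coincides with the mild solution because $u^N\to u$ in $C([0,T];L_2(\Gamma))$.

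\textbf{Weak solution.} The continuous embeddings $\dot H^\alpha\hookrightarrow\dot H^{\sfrac{\alpha}{2}}$ and $L_2\hookrightarrow\dot H^{-\sfrac{\alpha}{2}}$ give $u\in\mathbb U$. To verify \eqref{weakformofentireequation}, test the ODE against $\phi_j=(\phi,e_j)_{L_2(\Gamma)}$ and sum. For $u(t)\in\dot H^\alpha(\Gamma)$ we have $\mathfrak a_\alpha(u,\phi)=\sum_j\mu_j u_j\phi_j=(L^{\sfrac{\alpha}{2}}u,\phi)_{L_2(\Gamma)}$, so the identity holds for a.e.\ $t$, and $u(0)=u_0$ by construction. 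For uniqueness among weak solutions, take the difference $w\in\mathbb U$ of two weak solutions; it satisfies the homogeneous problem with $w(0)=0$. Testing with $\phi=w(t)$ and using the Lions--Magenes identity $\frac{\dd}{\dd t}\|w\|^2_{L_2(\Gamma)}=2\langle\partial_t w,w\rangle$ gives $\frac12\frac{\dd}{\dd t}\|w\|_{L_2(\Gamma)}^2+\|w\|^2_{\dot H^{\sfrac{\alpha}{2}}(\Gamma)}=0$. Hence $\|w(t)\|_{L_2(\Gamma)}$ is nonincreasing from zero, so $w\equiv0$.
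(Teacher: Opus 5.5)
Your proof is correct, but it takes a different route from the paper's.

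\textbf{The paper's route.} The argument is purely abstract. It notes that $-L^{\sfrac{\alpha}{2}}$ generates an analytic contraction semigroup and obtains \eqref{eq:mildsolution} by variation of parameters. It then cites the regularity and the bound \eqref{ineq:boundonu} as ``standard consequences of analytic semigroup theory'' (in effect, maximal $L_2$-regularity). Finally, it asserts that this regularity makes $u$ a weak solution.

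\textbf{Your route.} You diagonalize in the eigenbasis of $L$ and reduce to the scalar ODEs $u_j'+\mu_ju_j=f_j$. You then derive a per-mode energy bound and sum via Parseval. You justify $\partial_tu\in L_2(0,T;L_2(\Gamma))$ through Galerkin truncations. Finally, you prove uniqueness among weak solutions explicitly via the Lions--Magenes energy identity.

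\textbf{What each buys.} The abstract route is shorter and would carry over to non-self-adjoint sectorial generators. However, it outsources the key estimate and leaves uniqueness of weak solutions implicit, even though the statement claims it.

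Your route needs neither analyticity nor external maximal-regularity results, only the spectral structure already set up in Section~2. It is self-contained, with explicit constants, and it makes the weak-solution uniqueness rigorous.

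It also shows something sharper. Your per-mode bound contains $\mu_ju_{0,j}^2$, so summing gives $\|u_0\|^2_{\dot H^{\alpha/2}(\Gamma)}$. Hence \eqref{ineq:boundonu} holds under the weaker, trace-space assumption $u_0\in\dot H^{\alpha/2}(\Gamma)$. You only land on $\|u_0\|_{\dot H^{\alpha}(\Gamma)}$ because you use $\mu_j\lesssim\mu_j^2$.

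\textbf{Two details to state explicitly.} First, when verifying the weak form, the exceptional null set must not depend on $\phi$. Take the union over $j$ of the null sets where the $j$th ODE fails, and conclude that $\partial_tu(t)+L^{\sfrac{\alpha}{2}}u(t)-f(t)=0$ in $L_2(\Gamma)$ off that set.

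Second, the claimed convergence $u^N\to u$ in $C([0,T];L_2(\Gamma))$ follows from the summable bound $\sup_t u_j(t)^2\le 2u_{0,j}^2+2T\|f_j\|^2_{L_2(0,T)}$. Alternatively, convergence in $L_2(0,T;L_2(\Gamma))$ already suffices to identify the limit.
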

\begin{proof}
Existence and uniqueness of the mild solution follows as $-L^{\sfrac{\alpha}{2}}$ generates an analytic contraction semigroup on $L_2(\Gamma)$. The representation formula \eqref{eq:mildsolution} is given by the variation of parameters formula. The regularity properties and estimate \eqref{ineq:boundonu} are standard consequences of analytic semigroup theory. The stated regularity implies that $u$ satisfies \eqref{weakformofentireequation}, and hence the mild solution coincides with the weak.
\end{proof}
Using the representation \eqref{spectral_semigroup} of the semigroup $T(t)$, the solution \eqref{eq:mildsolution} is
\begin{equation}
\label{eq:sol_reprentation}
        u(s,t) = \displaystyle\sum_{j\in\mathbb{N}}\mathrm{e}^{-\lambda^{\sfrac{\alpha}{2}}_jt}\left(u_0, e_j\right)_{L_2(\Gamma)}e_j(s) + \int_0^t \displaystyle\sum_{j\in\mathbb{N}}\mathrm{e}^{-\lambda^{\sfrac{\alpha}{2}}_j(t-r)}\left(f(r), e_j\right)_{L_2(\Gamma)}e_j(s)\dd r.
\end{equation}
\section{Numerical approximation}
\label{numericalapproxsec}
We approximate \eqref{eq:maineq} using the implicit backward Euler method in time and piecewise linear finite elements on the graph in space.
\subsection{Time discretization}
    To discretize the time domain, we divide the interval $[0, T]$ into $N$ uniform subintervals with time steps $t_k = k\tau,\; k = 0, \dots, N$, where $\tau = T/N$ is the time step size. For a continuous function $\phi\in C([0,T];X)$, we denote its $X$-value at $t_k$ by $\phi^k=\phi(t_k)$ and let $\phi^\tau$ denote the sequence of evaluations $\{\phi^k\}_{k=0}^N$ at the time step sequence $\{t_k\}_{k=0}^N$. For any sequence $\phi^\tau$, we define the norms $\norm{\phi^\tau}_{\ell^\infty(X)} = \sup_{k=1,\dots,N}\|\phi^k\|_X$ and $\norm{\phi^\tau}^p_{\ell^p(X)} =\tau\sum_{k=1}^N\|\phi^k\|_X^p$ for $p\in[1,\infty)$ and the difference operators $\delta\phi^{k+1} = (\phi^{k+1}-\phi^{k})/\tau$ for $k=0,\dots, N-1$ and $\bar{\delta}\phi^{k} = -\delta\phi^{k+1}$ for $k=N-1,\dots, 0$. In addition, we let $\hat{\phi}^\tau$ denote its piecewise linear interpolation given by $\hat{\phi}^\tau(0) = \phi^0$ and $\hat{\phi}^\tau(t) = \phi^k+(t-t_k)\delta \phi^{k+1}$ for $t\in(t_k,t_{k+1}]$ and $k = 0,\dots, N-1$, and $\tilde{\phi}^\tau$ represent the piecewise constant function given by
 \begin{align}
 \label{def:piecewise_constant}
 \begin{cases}
 \tilde{\phi}^\tau(0) = \phi^0\\
     \tilde{\phi}^\tau(t) = \phi^k,\quad t\in(t_{k-1},t_k],\quad k=1,\dots, N.
 \end{cases}
 \end{align}
 
 \begin{remark}
 \label{discrete_to_continuous}
     For any sequence $\phi^\tau$, we have $\norm{\phi^\tau}_{\ell^2(X)} = \|\tilde{\phi}^\tau\|_{L_2(0,T;X)}$. Indeed,
     \begin{align*}
\textstyle\|\tilde{\phi}^\tau\|^2_{L_2(0,T;X)} = \sum_{k = 1}^N \int_{t_{k-1}}^{t_k} \|\tilde{\phi}^\tau(t)\|^2_X\dd t = \sum_{k = 1}^N \int_{t_{k-1}}^{t_k} \|\phi^k\|^2_X\dd t =\norm{\phi^\tau}^2_{\ell^2(X)}.
     \end{align*}
     We will henceforth identify $\|\tilde{\phi}^\tau\|_{L_2(0,T;X)}$ as $\norm{\phi^\tau}_{L_2(0,T;X)}$ for notational simplicity.
 \end{remark}
\subsection{Spatial discretization}
\phantomsection 
\label{sec:finite_element_approximation}
To build the finite element approximation, we define a space of continuous, piecewise linear functions, denoted by $V_h$, which is spanned by a collection of hat functions $\{\psi_h^i\}_{i=1}^{N_h}$ defined over the graph. These basis functions fall into two categories: those associated with internal nodes along the edges and those centered at graph vertices. See Figure~\ref{basis}. Each edge is subdivided into uniformly spaced segments, and standard hat functions are defined at the internal nodes. At each vertex, a special hat function is constructed with support on all incident edges, forming a star-shaped structure around the vertex. Let $\mathcal T_h$ denote the resulting partition of $\Gamma$. Throughout, we assume that $\mathcal T_h$ is conforming and quasi-uniform, with every graph vertex coinciding with a mesh node. More precisely, there exists a constant $\rho\ge1$, independent of $h$, such that $h_K\le h$ and $h\le \rho h_K$ for all $K\in\mathcal T_h$, where $h_K$ denotes the length of the mesh interval $K$ and $ h=\max_{K\in\mathcal T_h} h_K$. Further details on the construction of the basis functions can be found in \citep{arioli2018finite}.
\begin{figure}[t]
\centering
\includegraphics[width=0.5\textwidth]{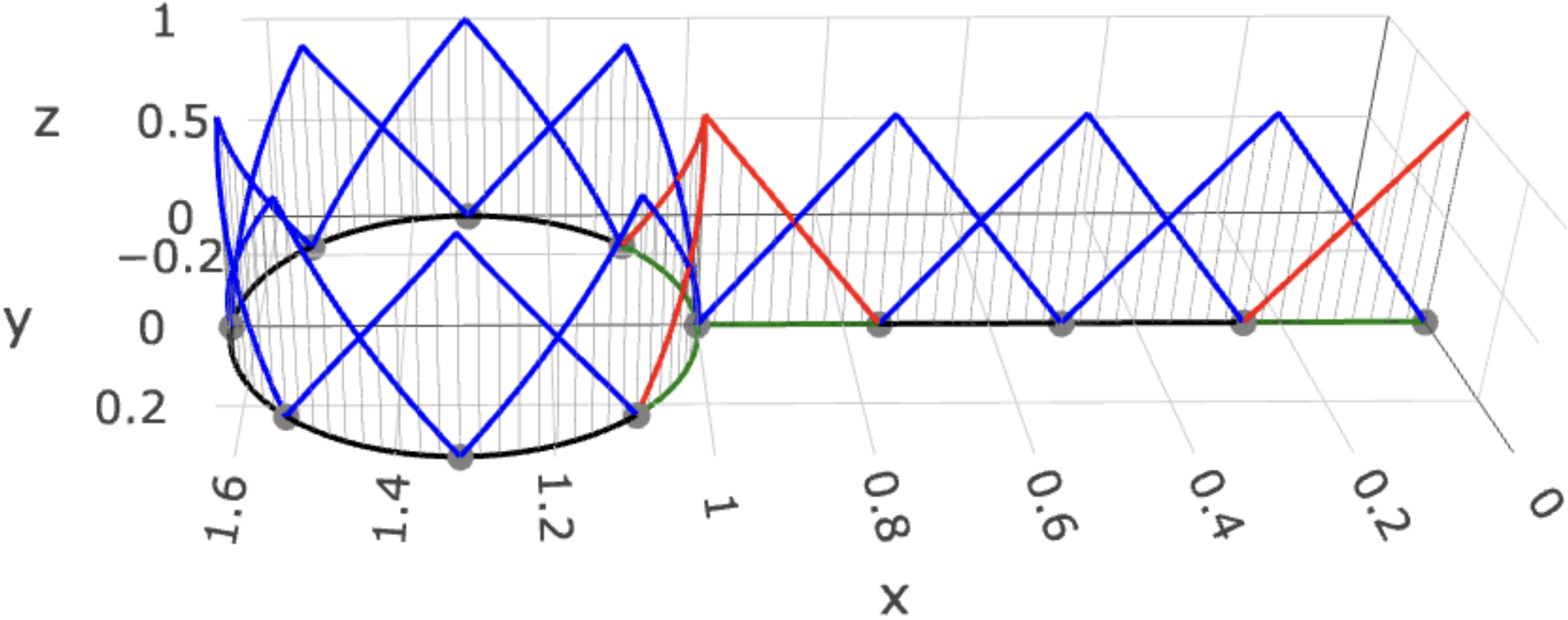}
\caption{Illustration of the basis function system $\{\psi^i_h\}_{i=1}^{N_h}$ on the tadpole graph (in black). Standard hat functions associated with internal edge nodes are shown in blue, while vertex-centered functions are highlighted in red. The star-shaped structures around the vertices are shown in green.}
\label{basis}
\end{figure}

To approximate the fractional operator in the discrete setting, we introduce the following operator acting on the finite element space $V_h$.
\begin{definition}
   We define the discrete version of $L$ as $L_h:V_h\to V_h$ via 
\begin{align*}
    (L_h\phi,\psi)_{L_2(\Gamma)} = a(\phi, \psi),\quad \phi,\psi\in V_h,
\end{align*} 
where $a:H^1(\Gamma)\times H^1(\Gamma)\to\mathbb{R}$ given by $a(u,v)= \kappa^2(u,v)_{L_2(\Gamma)} + (u',v')_{L_2(\Gamma)}$ is the bilinear form associated with $L$.
\end{definition}
Since $L_h$ is symmetric positive definite on $V_h$, it admits a spectral decomposition with eigenpairs $\{(\lambda_{j,h},e_{j,h})\}_{j=1}^{N_h}$. The eigenfunctions $\{e_{j,h}\}_{j=1}^{N_h}$ are orthonormal in $L_2(\Gamma)$, while the eigenvalues satisfy $0<\kappa^2=\lambda_{1,h}\leq\cdots\leq\lambda_{N_h,h}$ and $\lambda_j\leq\lambda_{j,h}$ for $j=1,\dots,N_h$ \citep[Section~6.3]{bolin2023regularity}. For $\alpha\in[0,2]$, we define the fractional power of $L_h$ by $L_h^{\sfrac{\alpha}{2}}\phi=\sum_{j=1}^{N_h}\lambda_{j,h}^{\sfrac{\alpha}{2}}(\phi,e_{j,h})_{L_2(\Gamma)}e_{j,h}$ for $\phi\in V_h$. This induces the discrete bilinear form $\mathfrak{a}_{h,\alpha}:V_h\times V_h\to\mathbb{R}$, given by $\mathfrak{a}_{h,\alpha}(\phi,\psi)=(L_h^{\sfrac{\alpha}{4}}\phi,L_h^{\sfrac{\alpha}{4}}\psi)_{L_2(\Gamma)}$ for $\phi,\psi\in V_h$.
\subsection{Elliptic projection}
In this subsection, we introduce the $L_2(\Gamma)$-orthogonal projector $P_h$ and the elliptic projector $G_{h,\alpha}$. While $P_h$ provides optimal approximation properties in $L_2(\Gamma)$, the operator $G_{h,\alpha}$ is defined via the bilinear form $\mathfrak{a}_{\alpha}(\cdot,\cdot)$ and plays a central role in the error analysis of the discrete fractional operator.

\begin{definition}
Let $P_h:L_2(\Gamma)\to V_h$ denote the $L_2(\Gamma)$-orthogonal projection onto $V_h$, i.e., for $\phi\in L_2(\Gamma)$, the projection $P_h\phi$ is the unique function in $V_h$ satisfying
\begin{align}
\label{eq:projector}
    (P_h\phi,v_h)_{L_2(\Gamma)} = (\phi,v_h)_{L_2(\Gamma)},\quad\forall v_h\in V_h.
\end{align}
\end{definition}
\begin{definition}
Let $\alpha\in[0,2]$. For $\alpha=0$, we define $G_{h,0}:=P_h$. For $\alpha\in(0,2]$ and $\phi\in\dot{H}^{\sfrac{\alpha}{2}}(\Gamma)$, the elliptic projector $G_{h,\alpha}:\dot{H}^{\sfrac{\alpha}{2}}(\Gamma)\to V_h$ is such that $G_{h,\alpha}\phi\in V_h$ is unique and
\begin{align}
\label{eq:elliptic_projector}
        \mathfrak{a}_{\alpha}(G_{h,\alpha}\phi,v_h) = \mathfrak{a}_{\alpha}(\phi,v_h),\quad\forall v_h\in V_h.
\end{align}

\end{definition}
\begin{proposition}
\label{proposition:with_projection}
    Let $\alpha\in[0,2]$. If $\phi\in\dot{H}^{\alpha}(\Gamma)$, then
    \begin{align}
    \label{ineq:estimateforPh}
        \norm{\phi-P_h \phi}_{L_2(\Gamma)}\lesssim h^{\alpha}\norm{\phi}_{\dot{H}^{\alpha}(\Gamma)}.
    \end{align}
\end{proposition}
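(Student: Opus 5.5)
The plan is to prove \eqref{ineq:estimateforPh} at the two endpoints $\alpha=0$ and $\alpha=2$ and then interpolate. Write $E_h := I-P_h$. Since $P_h$ is the $L_2(\Gamma)$-orthogonal projection, $\|E_h\|_{\mathcal L(L_2(\Gamma))}\le 1$, which gives the case $\alpha=0$ with constant one.

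For $\alpha=2$, we need $\|\phi-P_h\phi\|_{L_2(\Gamma)}\lesssim h^2\|\phi\|_{\dot H^2(\Gamma)}$. Because $\dot H^2(\Gamma)=D(L)=K(\Gamma)$ with equivalent norms, we have $\|\phi\|_{\widetilde H^2(\Gamma)}\lesssim \|L\phi\|_{L_2(\Gamma)}$. One way to see this is edgewise: $\phi_e''=\kappa^2\phi_e-(L\phi)_e$ and $\|\phi\|_{L_2}\le\kappa^{-2}\|L\phi\|_{L_2}$. The best-approximation property of $P_h$ then gives $\|\phi-P_h\phi\|_{L_2}\le\|\phi-I_h\phi\|_{L_2}$, where $I_h$ is the nodal piecewise linear interpolant. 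This interpolant is well defined and lies in $V_h$, since $\phi\in C(\Gamma)$ and every vertex is a mesh node, so the vertex values glue continuously into the star-shaped hat functions. On each mesh interval $K$, the standard one-dimensional interpolation estimate gives $\|\phi-I_h\phi\|_{L_2(K)}\lesssim h_K^2\|\phi_e''\|_{L_2(K)}$. Summing over $K$ yields $\|\phi-P_h\phi\|_{L_2}\lesssim h^2\|\phi\|_{\widetilde H^2(\Gamma)}\lesssim h^2\|\phi\|_{\dot H^2(\Gamma)}$.

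For $\alpha\in(0,2)$, we interpolate the operator $E_h$. The spaces $\dot H^\alpha(\Gamma)$ are defined spectrally, so $\dot H^{\alpha}(\Gamma)=(L_2(\Gamma),\dot H^2(\Gamma))_{\alpha/2}$ with equivalent norms; with the Hilbert-space (K-method) normalization as in \citep{ChandlerWilde2015interpolation}, the norms are in fact equal. The operator $E_h$ is bounded $L_2\to L_2$ with norm $\le 1$ and bounded $\dot H^2\to L_2$ with norm $\lesssim h^2$. The interpolation inequality for linear operators then gives $\|E_h\|_{\dot H^\alpha\to L_2}\lesssim 1^{1-\alpha/2}(h^2)^{\alpha/2}=h^\alpha$, with a constant independent of $h$.

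The main obstacle is the endpoint $\alpha=2$, specifically the identification $\dot H^2(\Gamma)\hookrightarrow\widetilde H^2_C(\Gamma)$ together with the check that the interpolant respects the vertex structure; the Kirchhoff condition itself plays no role for $P_h$. If one prefers to avoid spectral interpolation, an alternative is a direct spectral argument. Split $\phi=\sum_{\lambda_j\le h^{-2}}+\sum_{\lambda_j>h^{-2}}$. The high-frequency part is bounded by the $\alpha=0$ estimate, $\|\cdot\|_{L_2}\le h^{\alpha}\|\phi\|_{\dot H^\alpha}$. The low-frequency part is bounded by the $\alpha=2$ estimate, $h^2\|\cdot\|_{\dot H^2}\le h^2 h^{\alpha-2}\|\phi\|_{\dot H^\alpha}$. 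This gives the same conclusion elementarily.
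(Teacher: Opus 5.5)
Your proposal is correct, and it has the same structure as the paper's proof. Both use the contraction bound at $\alpha=0$, an $O(h^2)$ bound at $\alpha=2$ from the $L_2$ best-approximation property of $P_h$, and operator interpolation of $I-P_h$ via \citep{ChandlerWilde2015interpolation}.

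The real difference is the comparison element at $\alpha=2$. The paper bounds $\|(I-P_h)\phi\|_{L_2(\Gamma)}$ by $\|(I-G_{h,2})\phi\|_{L_2(\Gamma)}$ and imports the Ritz-projection error bound from \citep[Proposition~6.2]{bolin2023regularity}, which is itself a duality result. You instead compare with the nodal interpolant $I_h\phi$ and use only the elementwise estimate together with $\|\phi''\|_{L_2(\Gamma)}\lesssim\|L\phi\|_{L_2(\Gamma)}$. Your route is self-contained, needs no Aubin--Nitsche argument, and is precisely the argument the paper itself gives later in the proof of Lemma~\ref{lem:graph-fractional-fe-approx}. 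The paper's route is shorter on the page but relies on an external result.

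Your spectral-splitting alternative is also valid and sidesteps the abstract interpolation theorem. Modes with $\lambda_j\le h^{-2}$ are handled by the $\alpha=2$ bound, using $\lambda_j^{2}\le h^{2\alpha-4}\lambda_j^{\alpha}$. Modes with $\lambda_j>h^{-2}$ are handled by the $\alpha=0$ bound, using $1\le (h^2\lambda_j)^{\alpha}$. This works because $\dot H^\alpha(\Gamma)$ and the endpoint space $\dot H^2(\Gamma)$ are both defined through the spectral calculus of the same operator $L$. The interpolation-theorem route is what you would need if the intermediate norms were not given that way.
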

\begin{proof}
    Since $P_h$ is the $L_2(\Gamma)$-orthogonal projection, we have $\norm{(I-P_h)\phi}_{L_2(\Gamma)}\leq\norm{\phi}_{L_2(\Gamma)}$. This corresponds to \eqref{ineq:estimateforPh} with $\alpha = 0$. By definition of $P_h$ and \citep[Proposition~6.2]{bolin2023regularity}, we obtain $\norm{(I-P_h)\phi}_{L_2(\Gamma)}\leq\norm{(I-G_{h,2})\phi}_{L_2(\Gamma)}\leq h^2\norm{\phi}_{\dot{H}^2(\Gamma)}$, which yields \eqref{ineq:estimateforPh} with $\alpha = 2$. Having obtained estimate \eqref{ineq:estimateforPh} for $\alpha = 0$ and $\alpha=2$, we can apply the interpolation theory from \citep[Section~3.2]{ChandlerWilde2015interpolation} (specifically,  Theorem 3.5 and (ii) on page 428 with $T=I-P_h$) to obtain estimate \eqref{ineq:estimateforPh} for $\alpha\in[0,2]$ as well. 
\end{proof}
\begin{lemma}
\label{lem:graph-fractional-fe-approx}
    If $\mathcal{T}_h$ is quasi-uniform, then for every $0\leq t\leq1$ and $t\leq s\leq2$, we have $\inf_{v_h\in V_h}\|u-v_h\|_{\dot H^t(\Gamma)}\lesssim h^{s-t}\|u\|_{\dot H^s(\Gamma)}$ for all $u\in \dot H^s(\Gamma)$. In particular, for every $\alpha\in(0,2]$, we have $\inf_{v_h\in V_h}\|u-v_h\|_{\dot H^{\sfrac{\alpha}{2}}(\Gamma)}\lesssim h^{\sfrac{\alpha}{2}}\|u\|_{\dot H^\alpha(\Gamma)}$ for all $u\in \dot H^\alpha(\Gamma)$.
\end{lemma}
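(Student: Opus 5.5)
The plan is to reduce the estimate to a bounded linear operator $\Pi_h:\dot H^s(\Gamma)\to V_h$ with good approximation in $\dot H^0$ and $\dot H^1$, and then to interpolate in both $s$ and $t$. The needed identifications are $\dot H^0(\Gamma)=L_2(\Gamma)$ with equal norms, and $\dot H^1(\Gamma)=H^1(\Gamma)$ with equivalent norms. The latter holds because the form $a(\cdot,\cdot)$ is exactly $\|\cdot\|^2_{\dot H^1}$ on $H^1(\Gamma)$, the form domain of $L$. We also need $\dot H^2(\Gamma)=K(\Gamma)\hookrightarrow \widetilde H^2_C(\Gamma)$ with $\|\phi\|_{\widetilde H^2}\lesssim\|L\phi\|_{L_2}$. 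The intermediate spaces $\dot H^\theta$ are exact real/complex interpolation spaces of the Hilbert scale $(\dot H^{\theta_0},\dot H^{\theta_1})_\lambda$, as in \citep{ChandlerWilde2015interpolation}.

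\textbf{Choice of operator.} I would take $\Pi_h=G_{h,2}$, the $a$-orthogonal (Ritz) projection onto $V_h\subset H^1(\Gamma)$. Then $\|(I-G_{h,2})u\|_{\dot H^1}\le\|u\|_{\dot H^1}$ by orthogonality, and $\|(I-G_{h,2})u\|_{L_2}\le\|u\|_{L_2}$ fails in general. I would therefore handle $t=0$ separately using $P_h$ and Proposition~\ref{proposition:with_projection}, which already gives $\|u-P_hu\|_{L_2}\lesssim h^{s}\|u\|_{\dot H^s}$ for $s\in[0,2]$.

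\textbf{Case $t=1$.} Orthogonality gives the bound for $s=1$ with constant $1$. For $s=2$, Céa's lemma gives $\|(I-G_{h,2})u\|_{\dot H^1}\lesssim\|u-I_hu\|_{H^1}$, where $I_h$ is the nodal interpolant. Since vertices are mesh nodes, $I_hu$ is continuous and lies in $V_h$. Edgewise standard estimates then give $\lesssim h\|u\|_{\widetilde H^2}\lesssim h\|u\|_{\dot H^2}$. Interpolating the operator $I-G_{h,2}:\dot H^s\to\dot H^1$ between $s=1$ and $s=2$ gives $h^{s-1}$ for $s\in[1,2]$.

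\textbf{Case $t\in(0,1)$.} This is the main obstacle: interpolating in the target space requires one fixed operator bounded in both endpoint norms. For $s\ge1$ I would use $G_{h,2}$. The Aubin--Nitsche duality argument, using $\dot H^2$-regularity of $L^{-1}$ and the $s=2$ estimate above, gives $\|(I-G_{h,2})u\|_{L_2}\lesssim h\|(I-G_{h,2})u\|_{\dot H^1}\lesssim h^{s}\|u\|_{\dot H^s}$. Interpolating between target spaces $L_2$ and $\dot H^1$ at fixed $s$ then yields $h^{s-t}$.

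For $t\le s<1$ I would instead use $P_h$. Its $\dot H^1$-stability $\|P_hu\|_{H^1}\lesssim\|u\|_{H^1}$ follows from quasi-uniformity via an inverse estimate: compare $P_h u$ with $I_h$ or a Clément/Scott--Zhang-type quasi-interpolant and apply the inverse inequality $\|v_h\|_{H^1}\lesssim h^{-1}\|v_h\|_{L_2}$. This gives $\|(I-P_h)u\|_{\dot H^1}\lesssim h^{s-1}\|u\|_{\dot H^s}$ for $s\in[1,2]$, and $\lesssim \|u\|_{\dot H^1}$ at $s=1$. Combined with the $L_2$ bound from Proposition~\ref{proposition:with_projection}, a double (bilinear-type) interpolation, or interpolation in $t$ at each fixed $s$ after first establishing the endpoint-$t$ bounds for all $s$, yields $\|(I-P_h)u\|_{\dot H^t}\lesssim h^{s-t}\|u\|_{\dot H^s}$ for all $0\le t\le1$ and $t\le s\le 2$.

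This case is also where the case $s<1$ with target $\dot H^t$ needs care. The bound $\|(I-P_h)u\|_{\dot H^t}\lesssim\|u\|_{\dot H^t}$ at $s=t$ comes from interpolating the stability of $P_h$ in $L_2$ and $\dot H^1$. The bound then follows from reiteration.

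\textbf{Particular case.} Taking $t=\alpha/2\in(0,1]$ and $s=\alpha\ge t$ gives $h^{\alpha/2}$, which is the stated estimate since the infimum is bounded by the error of $P_hu$.
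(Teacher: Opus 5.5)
Your proof is correct, but it is assembled differently from the paper's.

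For $0\le t\le s\le 1$, the paper builds an explicit Scott--Zhang-type quasi-interpolant $\Pi_h$ on the graph. It uses a dual basis on one chosen interval per node, patches $\omega_K$, a patchwise Poincar\'e inequality and an overlap count. The point is to have a single operator on $L_2(\Gamma)$ that is $L_2$-stable, $H^1$-stable and first-order accurate. You replace it by $P_h$. Its $L_2$-stability is free, and its $H^1$-stability follows from the inverse inequality after comparing with $I_h$. In one dimension $I_h$ is defined and $H^1$-stable on all of $H^1(\Gamma)$, since $(I_hu)'|_K$ is the mean of $u'$ on $K$, and it satisfies $\|u-I_hu\|_{L_2}\lesssim h\|u'\|_{L_2}$. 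So the Cl\'ement option is unnecessary. This is noticeably shorter than the paper's patch-and-overlap bookkeeping and uses no extra assumption, since both routes rely on quasi-uniformity.

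For $1\le s\le 2$, the paper uses $I_h$ directly. Its $L_2$ and $\dot H^1$ errors at $s=1,2$ are elementary; the paper interpolates in $s$ and then applies $\|v\|_{\dot H^t}\le\|v\|_{L_2}^{1-t}\|v\|_{\dot H^1}^t$. Your detour through $G_{h,2}$, C\'ea and Aubin--Nitsche is valid but gains nothing here. Your C\'ea step already rests on those same $I_h$ estimates, so you could interpolate $I-I_h$ directly.

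Your $t=0$ case leans on Proposition~\ref{proposition:with_projection}, and hence on \citep{bolin2023regularity}. That is legitimate, because the proposition does not use this lemma, but the paper's proof of the lemma is self-contained.

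One step needs correcting. For $s<1$, the option of interpolating in $t$ at each fixed $s$ is not available, because $(I-P_h)u\notin\dot H^1$ for general $u\in\dot H^s$. What works, and is exactly the paper's final step, is to interpolate in the source space with $\theta=(s-t)/(1-t)$ between two maps:
\begin{itemize}
\item $I-P_h:\dot H^t\to\dot H^t$, which is bounded;
\item $I-P_h:\dot H^1\to\dot H^t$, with norm $\lesssim h^{1-t}$, obtained from your $s=1$ bounds in $L_2$ and $\dot H^1$ via the interpolation inequality.
\end{itemize}
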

\begin{proof}
    We first prove the estimate for $0\leq t\leq s\leq1$. Since $\dot H^1(\Gamma)=H^1(\Gamma)$ with equivalent norms, the spectral characterization of $L$ implies that it suffices to prove the result for the interpolation scale $H^r(\Gamma)=[L_2(\Gamma),H^1(\Gamma)]_r$, $0\le r\le1$. For each node $a\in\mathcal N_h$, fix once and for all a mesh interval $K_a\in\mathcal T_h$ having $a$ as an endpoint. If several such intervals exist, choose one arbitrarily. For each node $a\in\mathcal N_h$, let $\psi_a\in V_h$ be the associated global hat function, let $\ell_a = \psi_a|_{K_a}$ denote the local affine nodal basis function on $K_a$ corresponding to $a$, and let $\eta_a\in\mathbb P_1(K_a)$ be the local dual basis function satisfying $\int_{K_a}\eta_a\ell_b\dd x=\delta_{ab}$ for each endpoint $b$ of $K_a$. Define the linear functional $\lambda_a(u)=\int_{K_a}u\,\eta_a\dd x$ for $u\in L_2(\Gamma)$ and define the quasi-interpolant $\Pi_hu=\sum_{a\in\mathcal N_h}\lambda_a(u)\psi_a$. Then $\Pi_h:L_2(\Gamma)\to V_h$ is well-defined. Moreover, $\Pi_h$ reproduces $V_h$. Indeed, if $v_h\in V_h$, then $v_h|_{K_a}$ is affine and therefore $\lambda_a(v_h)=v_h(a)$. Since $v_h$ is continuous at graph vertices, this value is independent of the edge by which the vertex is approached. Hence $\Pi_hv_h=\sum_{a\in\mathcal N_h}v_h(a)\psi_a=v_h$. For each interval $K\in\mathcal T_h$, denote its endpoints by $a_K$ and $b_K$ and define $\omega_K=K\cup K_{a_K}\cup K_{b_K}$. This is a connected subgraph of $\Gamma$. It consists of at most three mesh intervals and satisfies $|\omega_K|\leq (1+2\rho)h_K$. Let $D_\Gamma:=\max\{2,\max_{v\in\mathcal V}\deg(v)\}$. Thus, every mesh node is contained in at most $D_\Gamma$ mesh intervals. Furthermore, each fixed mesh interval $J\in\mathcal T_h$ is contained in at most $N_{\mathrm{ov}}:=1+2D_\Gamma$ of the patches $\omega_K$.

    Next we prove $L_2$ stability. On the reference interval $(0,1)$, the two dual functions to the affine nodal basis have finite $L_2$ norms. By scaling, $\|\eta_a\|_{L_2(K_a)}\lesssim h_{K_a}^{-\sfrac{1}{2}}$ and consequently, $|\lambda_a(u)|\lesssim h_{K_a}^{-\sfrac{1}{2}}\|u\|_{L_2(K_a)}$. On any interval $K$ with endpoint $a$, the restriction of $\psi_a$ is affine and satisfies $\|\psi_a\|_{L_2(K)}\leq h_K^{\sfrac{1}{2}}$ and $\|\psi_a'\|_{L_2(K)}\leq h_K^{-\sfrac{1}{2}}$. By quasi-uniformity, $h_K/h_{K_a}\leq\rho$, whenever $a$ is an endpoint of $K$. Therefore, $\|\Pi_hu\|_{L_2(K)} \leq |\lambda_{a_K}(u)|\|\psi_{a_K}\|_{L_2(K)} + |\lambda_{b_K}(u)|\|\psi_{b_K}\|_{L_2(K)} \lesssim \rho^{\sfrac{1}{2}}\|u\|_{L_2(\omega_K)}$. After squaring and summing over $K$, and using the overlap bound
    $N_{\mathrm{ov}}$, we obtain $\|\Pi_hu\|_{L_2(\Gamma)}\lesssim\|u\|_{L_2(\Gamma)}$.

    Next we prove $H^1$ stability. Since $\Pi_h$ reproduces constants, for any constant $c$, we have $(\Pi_hu)'=(\Pi_h(u-c))'$. Using the derivative estimate above gives $\|(\Pi_hu)'\|_{L_2(K)}\lesssim h_K^{-1}\|u-c\|_{L_2(\omega_K)}$. Let $c_{\mathrm{av}}$ denote the average of $u$ over $\omega_K$. Since $\omega_K$ is connected, the Poincare inequality on a connected one-dimensional metric graph gives $ \|u-c_{\mathrm{av}}\|_{L_2(\omega_K)} \leq|\omega_K|\,\|u'\|_{L_2(\omega_K)}$. Indeed, for almost every $x,y\in\omega_K$, continuity on the connected patch and integration along a path in $\omega_K$ joining $x$ and $y$ imply $|u(x)-u(y)|\leq|\omega_K|^{\sfrac{1}{2}}\|u'\|_{L_2(\omega_K)}$, and the above Poincare inequality follows by averaging in $y$ and then integrating in $x$. Therefore, $\|(\Pi_hu)'\|_{L_2(K)}\lesssim h_K^{-1}|\omega_K|\|u'\|_{L_2(\omega_K)}\lesssim(1+2\rho)\|u'\|_{L_2(\omega_K)}$. Squaring and summing over $K$, again using the overlap bound $N_{\mathrm{ov}}$, yields $\|(\Pi_hu)'\|_{L_2(\Gamma)}\lesssim\|u'\|_{L_2(\Gamma)}$. Together with the $L_2$ stability, this gives $\|\Pi_hu\|_{H^1(\Gamma)}\lesssim\|u\|_{H^1(\Gamma)}$.

    Let $E_h=I-\Pi_h$. Since $\Pi_h$ reproduces constants, $E_hu=(u-c_{\mathrm{av}})-\Pi_h(u-c_{\mathrm{av}})$. Hence, by the local $L_2$ stability of $\Pi_h$ and the Poincare inequality on $\omega_K$, we have $\|E_hu\|_{L_2(K)}\lesssim \|u-c_{\mathrm{av}}\|_{L_2(\omega_K)}\lesssim h_K\|u'\|_{L_2(\omega_K)}$.  Squaring, summing over $K$, and using the overlap bound gives $\|E_hu\|_{L_2(\Gamma)}\lesssim h\|u\|_{H^1(\Gamma)}$. By the $L_2$- and $H^1$-stability of $\Pi_h$, we also have that $E_h:L_2(\Gamma)\to L_2(\Gamma)$ and $E_h: H^1(\Gamma)\to H^1(\Gamma)$ are bounded. By interpolation in the target space, for every $0\leq t\leq1$, we have $\|E_hu\|_{H^t(\Gamma)}\lesssim h^{1-t}\|u\|_{H^1(\Gamma)}$. Also, interpolation of the $L_2$ and $H^1$ stability estimates gives $\|E_hu\|_{H^t(\Gamma)}\lesssim\|u\|_{H^t(\Gamma)}$. Fix $0\leq t\leq s\leq1$. If $s=t$, the desired bound follows from the last estimate. If $t<s\leq1$, interpolate the two mappings $E_h:H^t(\Gamma)\to H^t(\Gamma)$ and $E_h:H^1(\Gamma)\to H^t(\Gamma)$. Since $H^s(\Gamma)=[H^t(\Gamma),H^1(\Gamma)]_\theta$ for $\theta=(s-t)/(1-t)$, we obtain $\|E_hu\|_{H^t(\Gamma)}\lesssim h^{\theta(1-t)}\|u\|_{H^s(\Gamma)}= h^{s-t}\|u\|_{H^s(\Gamma)}$. Returning to the equivalent spectral norms gives $\|u-\Pi_hu\|_{\dot H^t(\Gamma)}\lesssim h^{s-t}\|u\|_{\dot H^s(\Gamma)}$ for $0\leq t\leq s\leq1$.

    It remains to treat the case $1\leq s\leq2$ and $0\leq t\leq1$. In this range we use the nodal interpolant $I_hu\in V_h$. This is well-defined because $\dot H^s(\Gamma)\subset H^1(\Gamma)$ for $s\geq1$, and functions in $H^1(\Gamma)$ are continuous on the metric graph. Let $F_h = I-I_h$. On each mesh interval $K=[x_0,x_1]$, the restriction of $I_hu$ is the affine interpolant of $u|_K$ at the endpoints. For $u\in H^1(\Gamma)$, we have $(I_hu)'|_K = (u(x_1)-u(x_0))/h_K= h_K^{-1}\int_K u'(x)\dd x$. Hence, $\|(I_hu)'\|_{L_2(K)}\leq\|u'\|_{L_2(K)}$ and $ \|(F_hu)'\|_{L_2(K)}\lesssim\|u'\|_{L_2(K)}$. Moreover, since $F_hu$ vanishes at the endpoints of $K$, we have $\|F_hu\|_{L_2(K)}\leq h_K\|(F_hu)'\|_{L_2(K)}\lesssim h_K\|u'\|_{L_2(K)}$. After summing over $K$, we obtain $\|F_hu\|_{L_2(\Gamma)}\lesssim h\|u\|_{\dot H^1(\Gamma)}$ and  $\|F_hu\|_{\dot H^1(\Gamma)}\lesssim\|u\|_{\dot H^1(\Gamma)}$. For $u\in\dot H^2(\Gamma)=D(L)$, we have $u|_e\in H^2(e)$ on each edge $e$ and $        \sum_{e}\|u_e''\|_{L_2(e)}^2\lesssim\|u\|_{\dot H^2(\Gamma)}^2$. Indeed, edgewise, $Lu=\kappa^2u-u''$, and hence $\|u''\|_{L_2(\Gamma)}\leq\|Lu\|_{L_2(\Gamma)}+\kappa^2\|u\|_{L_2(\Gamma)}\lesssim\|u\|_{\dot H^2(\Gamma)}$. The standard one-dimensional interpolation estimates on each mesh interval give $\|F_hu\|_{L_2(K)}\lesssim h_K^2\|u''\|_{L_2(K)}$ and $\|(F_hu)'\|_{L_2(K)}\lesssim h_K\|u''\|_{L_2(K)}$. Summing over $K$ yields $\|F_hu\|_{L_2(\Gamma)}\lesssim h^2\|u\|_{\dot H^2(\Gamma)}$ and $\|F_hu\|_{\dot H^1(\Gamma)}\lesssim h\|u\|_{\dot H^2(\Gamma)}$. Interpolating between the endpoint estimates $I-I_h:\dot H^1(\Gamma)\to L_2(\Gamma)$ and $I-I_h:\dot H^2(\Gamma)\to L_2(\Gamma)$ gives $\|F_hu\|_{L_2(\Gamma)}\lesssim h^s\|u\|_{\dot H^s(\Gamma)} $ for $1\leq s\leq2$. Similarly, interpolating $I-I_h:\dot H^1(\Gamma)\to \dot H^1(\Gamma)$ and $I-I_h:\dot H^2(\Gamma)\to \dot H^1(\Gamma)$ gives $\|F_hu\|_{\dot H^1(\Gamma)}\lesssim h^{s-1}\|u\|_{\dot H^s(\Gamma)}$ for $1\leq s\leq2$. Finally, for $0\leq t\leq1$, the interpolation inequality in the Hilbert scale generated by $L$ gives 
    \begin{align*}
        \|F_hu\|_{\dot H^t(\Gamma)}
        \lesssim
        \|F_hu\|_{L_2(\Gamma)}^{1-t}
        \|F_hu\|_{\dot H^1(\Gamma)}^t  \lesssim h^{s-t}\|u\|_{\dot H^s(\Gamma)} .
    \end{align*}
    Since $I_hu\in V_h$, this also proves the desired best-approximation estimate for $1\leq s\leq2$.

\end{proof}
\begin{proposition}
\label{proposition:with_elliptic_projection}
Let $\alpha\in[0,2]$. If $\phi\in\dot{H}^{\alpha}(\Gamma)$, then
     \begin{align}
     \label{ineq:estimateforha}
        \norm{\phi - G_{h,\alpha}\phi}_{L_2(\Gamma)} \lesssim h^\alpha\norm{\phi}_{\dot{H}^{\alpha}(\Gamma)}.
    \end{align}
\end{proposition}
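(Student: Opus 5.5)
The natural route is an Aubin--Nitsche duality argument carried out in the energy norm $\dot H^{\sfrac{\alpha}{2}}(\Gamma)$ induced by $\mathfrak{a}_\alpha$. The two endpoint values of $\alpha$ are dispatched first. For $\alpha=0$ we have $G_{h,0}=P_h$, so the claim is just Proposition~\ref{proposition:with_projection} with $\alpha=0$. For $\alpha=2$ the form $\mathfrak{a}_2$ coincides with $a(\cdot,\cdot)$ on $\dot H^1(\Gamma)=H^1(\Gamma)$, so $G_{h,2}$ is the usual Ritz projection and the bound is \citep[Proposition~6.2]{bolin2023regularity}. It therefore remains to handle $\alpha\in(0,2)$. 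Note that the hypothesis $\phi\in\dot H^{\alpha}(\Gamma)\subset\dot H^{\sfrac{\alpha}{2}}(\Gamma)$ guarantees that $G_{h,\alpha}\phi$ is well defined.

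The first step is an energy estimate. Because $\mathfrak{a}_\alpha$ is exactly the $\dot H^{\sfrac{\alpha}{2}}$ inner product, $G_{h,\alpha}$ is the orthogonal projection onto $V_h$ in that inner product. Céa's lemma therefore holds with constant one, and Lemma~\ref{lem:graph-fractional-fe-approx} with $s=\alpha$, $t=\sfrac{\alpha}{2}$ gives
\begin{align*}
\|\phi-G_{h,\alpha}\phi\|_{\dot H^{\sfrac{\alpha}{2}}(\Gamma)}=\inf_{v_h\in V_h}\|\phi-v_h\|_{\dot H^{\sfrac{\alpha}{2}}(\Gamma)}\lesssim h^{\sfrac{\alpha}{2}}\|\phi\|_{\dot H^{\alpha}(\Gamma)}.
\end{align*}

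The second step is the duality argument. Set $e=\phi-G_{h,\alpha}\phi$ and let $w\in\dot H^{\sfrac{\alpha}{2}}(\Gamma)$ solve $\mathfrak{a}_\alpha(v,w)=(e,v)_{L_2(\Gamma)}$ for all $v$, i.e.\ $w=L^{-\sfrac{\alpha}{2}}e$. By Proposition~\ref{proposition:elliptic_regularity} with $r=0$, $w\in\dot H^{\alpha}(\Gamma)$ and $\|w\|_{\dot H^{\alpha}(\Gamma)}=\|e\|_{L_2(\Gamma)}$. Galerkin orthogonality and Cauchy--Schwarz then yield
\begin{align*}
\|e\|_{L_2(\Gamma)}^2=\mathfrak{a}_\alpha(e,w-v_h)\le\|e\|_{\dot H^{\sfrac{\alpha}{2}}(\Gamma)}\inf_{v_h\in V_h}\|w-v_h\|_{\dot H^{\sfrac{\alpha}{2}}(\Gamma)}\lesssim h^{\sfrac{\alpha}{2}}\|e\|_{\dot H^{\sfrac{\alpha}{2}}(\Gamma)}\|e\|_{L_2(\Gamma)},
\end{align*}
where Lemma~\ref{lem:graph-fractional-fe-approx} was applied once more to $w$. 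Dividing by $\|e\|_{L_2(\Gamma)}$ and inserting the energy estimate gives $\|e\|_{L_2(\Gamma)}\lesssim h^{\alpha}\|\phi\|_{\dot H^{\alpha}(\Gamma)}$, which is the claim.

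I do not expect any step to be a real obstacle, since the substantive work is already contained in Lemma~\ref{lem:graph-fractional-fe-approx}. The two points to check are the following. First, the approximation lemma must apply in the required range $t=\sfrac{\alpha}{2}\le 1\le$ or $\geq s=\alpha\le2$, which the lemma covers in its ``in particular'' clause. Second, the dual regularity $\|w\|_{\dot H^\alpha(\Gamma)}=\|e\|_{L_2(\Gamma)}$ must be exact, and for the spectrally defined operator it is exact by Proposition~\ref{proposition:elliptic_regularity}. The only care needed is to keep the implicit constants independent of $h$, which they are because the constants in Lemma~\ref{lem:graph-fractional-fe-approx} depend only on $\rho$ and $\Gamma$.
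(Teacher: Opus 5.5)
Your proposal is correct and is essentially the paper's proof. It uses a C\'ea-type energy estimate in the $\mathfrak{a}_\alpha$-norm via Lemma~\ref{lem:graph-fractional-fe-approx}, followed by an Aubin--Nitsche duality argument that relies on the exact spectral regularity $\|z\|_{\dot H^{\alpha}(\Gamma)}=\|e_h\|_{L_2(\Gamma)}$ from Proposition~\ref{proposition:elliptic_regularity}. The only difference is that you treat $\alpha=2$ separately by citing \citep[Proposition~6.2]{bolin2023regularity}, whereas the paper's duality argument already covers all $\alpha\in(0,2]$ uniformly; your special case is harmless but unnecessary.
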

\begin{proof}
    Let $\alpha\in(0,2]$ and $e_h=\phi-G_{h,\alpha}\phi$. By using \eqref{eq:elliptic_projector}, the Galerkin orthogonality relation $\mathfrak{a}_{\alpha}(e_h,v_h)=0$ holds for every $v_h\in V_h$. Since $\mathfrak{a}_{\alpha}$ is the inner product on $\dot H^{\sfrac{\alpha}{2}}(\Gamma)$, $G_{h,\alpha}$ is the $\mathfrak{a}_{\alpha}$-orthogonal projection onto $V_h$. Therefore, for any $v_h\in V_h$, writing $e_h=(\phi-v_h)+(v_h-G_{h,\alpha}\phi)$ and observing that $v_h-G_{h,\alpha}\phi\in V_h$, we have $\|e_h\|_{\dot H^{\sfrac{\alpha}{2}}(\Gamma)}^2
        =\mathfrak{a}_{\alpha}(e_h,e_h)
        =\mathfrak{a}_{\alpha}(e_h,\phi-v_h)
        \le
        \|e_h\|_{\dot H^{\sfrac{\alpha}{2}}(\Gamma)}
        \|\phi-v_h\|_{\dot H^{\sfrac{\alpha}{2}}(\Gamma)}$. This and Lemma~\ref{lem:graph-fractional-fe-approx} imply
    \begin{align}
    \label{ineq:energy_estimate}
    \textstyle
        \|e_h\|_{\dot H^{\sfrac{\alpha}{2}}(\Gamma)}\leq\inf_{v_h\in V_h}\|\phi-v_h\|_{\dot H^{\sfrac{\alpha}{2}}(\Gamma)}\lesssim h^{\sfrac{\alpha}{2}}\|\phi\|_{\dot H^\alpha(\Gamma)}.
    \end{align}
    We now use a duality argument. Let $z\in\dot H^\alpha(\Gamma)$ solve $\mathfrak{a}_{\alpha}(z,v)=(e_h,v)_{L_2(\Gamma)}$ for all $v\in \dot H^{\sfrac{\alpha}{2}}(\Gamma)$. By elliptic regularity (Proposition \ref{proposition:elliptic_regularity} with $r=0$, $u = z$, and $f=e_h$), we have $\|z\|_{\dot H^\alpha(\Gamma)} = \|e_h\|_{L_2(\Gamma)}$. Taking $v=e_h$ and letting $z_h\in V_h$ be arbitrary, the symmetry of $\mathfrak{a}_{\alpha}$ and Galerkin orthogonality yield $\|e_h\|_{L_2(\Gamma)}^2 = \mathfrak{a}_{\alpha}(e_h,z-z_h)\leq\|e_h\|_{\dot H^{\sfrac{\alpha}{2}}(\Gamma)}\|z-z_h\|_{\dot H^{\sfrac{\alpha}{2}}(\Gamma)}$. This and Lemma~\ref{lem:graph-fractional-fe-approx} imply
    \begin{align*}
    \textstyle
        \|e_h\|_{L_2(\Gamma)}^2 \leq \|e_h\|_{\dot H^{\sfrac{\alpha}{2}}(\Gamma)} \inf_{z_h\in V_h}\|z-z_h\|_{\dot H^{\sfrac{\alpha}{2}}(\Gamma)} \lesssim h^{\sfrac{\alpha}{2}} \|e_h\|_{\dot H^{\sfrac{\alpha}{2}}(\Gamma)} \|z\|_{\dot H^\alpha(\Gamma)} 
    \end{align*}
    By \eqref{ineq:energy_estimate} and the regularity estimate for $z$, we get $\|e_h\|_{L_2(\Gamma)}^2\lesssim h^\alpha \|\phi\|_{\dot H^\alpha(\Gamma)}\|e_h\|_{L_2(\Gamma)}$. If $e_h\neq0$, division by $\|e_h\|_{L_2(\Gamma)}$ yields \eqref{ineq:estimateforha}. If $e_h=0$, the estimate is immediate. Finally, since $G_{h,0}=P_h$ by convention, then for $\alpha=0$ the
estimate becomes $\|\phi-P_h\phi\|_{L_2(\Gamma)}\leq\|\phi\|_{L_2(\Gamma)}$, which is the stability of the $L_2(\Gamma)$-orthogonal projection.
\end{proof}
\begin{lemma}
    \label{lemma:boundwithhminus2}
    If $\mathcal{T}_h$ is quasi-uniform, then $\|L_h\phi\|_{L_2(\Gamma)}\lesssim h^{-2}\|\phi\|_{L_2(\Gamma)}$ for $\phi\in V_h$.
\end{lemma}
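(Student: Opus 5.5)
The plan is to reduce the bound to an inverse inequality on $V_h$ and then use the variational definition of $L_h$. Since $L_h$ is symmetric positive definite on $V_h$ with $L_2(\Gamma)$-orthonormal eigenpairs $\{(\lambda_{j,h},e_{j,h})\}_{j=1}^{N_h}$, we have $\|L_h\phi\|_{L_2(\Gamma)}\le \lambda_{N_h,h}\|\phi\|_{L_2(\Gamma)}$. It therefore suffices to show $\lambda_{N_h,h}\lesssim h^{-2}$. By the Rayleigh quotient characterization,
\begin{align*}
\lambda_{N_h,h}=\max_{0\neq\phi\in V_h}\frac{a(\phi,\phi)}{\|\phi\|_{L_2(\Gamma)}^2}
=\max_{0\neq\phi\in V_h}\frac{\kappa^2\|\phi\|_{L_2(\Gamma)}^2+\|\phi'\|_{L_2(\Gamma)}^2}{\|\phi\|_{L_2(\Gamma)}^2},
\end{align*}
so the claim reduces to the inverse inequality $\|\phi'\|_{L_2(\Gamma)}\lesssim h^{-1}\|\phi\|_{L_2(\Gamma)}$ for all $\phi\in V_h$. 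Then $\lambda_{N_h,h}\le\kappa^2+Ch^{-2}\lesssim h^{-2}$, where the $\kappa^2$ term is absorbed because $h\le\max_e\ell_e$ is bounded.

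Alternatively, one can avoid eigenvalues. Taking $\psi=L_h\phi$ in the definition gives $\|L_h\phi\|^2_{L_2(\Gamma)}=a(\phi,L_h\phi)\le \|\phi\|_{H^1(\Gamma)}\|L_h\phi\|_{H^1(\Gamma)}$. Applying the inverse inequality to both factors yields $\|L_h\phi\|^2_{L_2(\Gamma)}\lesssim h^{-2}\|\phi\|_{L_2(\Gamma)}\|L_h\phi\|_{L_2(\Gamma)}$, and dividing gives the result. Either route works; I would present the eigenvalue one.

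The inverse inequality itself is local. On each mesh interval $K\in\mathcal T_h$ of length $h_K$, $\phi|_K$ is affine. By scaling to the reference interval $(0,1)$ and using equivalence of norms on the two-dimensional space $\mathbb P_1(0,1)$, we get $\|\phi'\|_{L_2(K)}\le C h_K^{-1}\|\phi\|_{L_2(K)}$ with $C$ absolute; indeed $C=\sqrt{12}$ works. Quasi-uniformity gives $h_K^{-1}\le\rho h^{-1}$. Squaring, summing over $K$ (the intervals partition $\Gamma$ up to a null set, so no overlap constant is needed), and taking square roots yields $\|\phi'\|_{L_2(\Gamma)}\le \sqrt{12}\,\rho\, h^{-1}\|\phi\|_{L_2(\Gamma)}$.

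The only step needing care is this local inverse estimate with a constant uniform in $K$, which is where quasi-uniformity enters. Note that continuity of $\phi$ at the graph vertices plays no role, since the estimate is purely elementwise. Everything else is the Rayleigh quotient and the bound $\kappa^2\lesssim h^{-2}$.
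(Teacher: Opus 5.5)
Your proof is correct and is essentially the paper's argument: the paper writes $\|L_h\phi\|_{L_2(\Gamma)}=\sup_{\psi\in V_h,\,\|\psi\|_{L_2(\Gamma)}=1}a(\phi,\psi)$ and applies Cauchy--Schwarz and the inverse inequality to both factors, which is your alternative route. Your eigenvalue route bounds the same operator norm through the Rayleigh quotient for $\lambda_{N_h,h}$.

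What you add is an explicit proof of the elementwise inverse inequality (with constant $\sqrt{12}\,\rho$) and of the absorption of $\kappa^2$ into $h^{-2}$, both of which the paper simply invokes. One small point: in the alternative route, $a(u,v)\le\|u\|_{H^1(\Gamma)}\|v\|_{H^1(\Gamma)}$ needs a factor $\max\{1,\kappa^2\}$, or else the $\dot H^1(\Gamma)$ norm, which is what the paper uses.
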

\begin{proof}
    Since $L_h \phi \in V_h$, it follows that  $\left\|L_h \phi\right\|_{L_2(\Gamma)}=\sup _{\substack{\psi \in V_h :\|\psi\|_{L_2(\Gamma)}=1}} a(\phi, \psi)$. Under quasi-uniformity, the inverse inequality gives
        \begin{align*}
            a(\phi, \psi) \leq\|\phi\|_{\dot{H}^1(\Gamma)}\|\psi\|_{\dot{H}^1(\Gamma)} \lesssim(h^{-1}\|\phi\|_{L_2(\Gamma)})(h^{-1}\|\psi\|_{L_2(\Gamma)})=h^{-2}\|\phi\|_{L_2(\Gamma)}.
        \end{align*}
        Taking the supremum gives the result.
\end{proof}
\begin{proposition}
\label{prop:controlofLhbyL}
        Let $\alpha\in[0,2]$. If $\phi\in\dot{H}^{\alpha}(\Gamma)$, then
        \begin{align}
        \label{ineq:controlofLhbyL}
            \|L_h^{\sfrac{\alpha}{2}}P_h\phi\|_{L_2(\Gamma)} \lesssim \|L^{\sfrac{\alpha}{2}}\phi\|_{L_2(\Gamma)}.
        \end{align}
\end{proposition}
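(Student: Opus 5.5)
We prove \eqref{ineq:controlofLhbyL} at the endpoints $\alpha=0$ and $\alpha=2$ and then interpolate, the same way as in Proposition~\ref{proposition:with_projection}.

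For $\alpha=0$ the estimate is just $\|P_h\phi\|_{L_2(\Gamma)}\le\|\phi\|_{L_2(\Gamma)}$, which holds because $P_h$ is an orthogonal projection.

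For $\alpha=2$ we do not estimate $L_hP_h\phi$ directly. Instead we insert the Ritz projection $R_h:=G_{h,2}$, which is the $a(\cdot,\cdot)$-orthogonal projection onto $V_h$, and write
\begin{align*}
L_hP_h\phi = L_hR_h\phi + L_h(P_h\phi-R_h\phi).
\end{align*}
For the first term, take any $\psi\in V_h$. Then $(L_hR_h\phi,\psi)_{L_2(\Gamma)}=a(R_h\phi,\psi)=a(\phi,\psi)=(L\phi,\psi)_{L_2(\Gamma)}$, where the last equality is integration by parts using the Kirchhoff conditions encoded in $D(L)$. Hence $L_hR_h\phi=P_hL\phi$, and so $\|L_hR_h\phi\|_{L_2(\Gamma)}\le\|L\phi\|_{L_2(\Gamma)}$.

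For the second term, $P_h\phi-R_h\phi\in V_h$, so Lemma~\ref{lemma:boundwithhminus2} gives $\|L_h(P_h\phi-R_h\phi)\|_{L_2(\Gamma)}\lesssim h^{-2}\|P_h\phi-R_h\phi\|_{L_2(\Gamma)}$. Since $P_h$ is a projection onto $V_h$ and $R_h\phi\in V_h$, we have $P_h\phi-R_h\phi=P_h(\phi-R_h\phi)$. Therefore $\|P_h\phi-R_h\phi\|_{L_2(\Gamma)}\le\|\phi-G_{h,2}\phi\|_{L_2(\Gamma)}\lesssim h^2\|\phi\|_{\dot H^2(\Gamma)}$ by Proposition~\ref{proposition:with_elliptic_projection}. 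The powers of $h$ cancel, and using $\|\phi\|_{\dot H^2(\Gamma)}=\|L\phi\|_{L_2(\Gamma)}$ we obtain $\|L_hP_h\phi\|_{L_2(\Gamma)}\lesssim\|L\phi\|_{L_2(\Gamma)}$.

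To pass to intermediate $\alpha\in(0,2)$, regard $P_h$ as a linear map from $\dot H^{\alpha}(\Gamma)$ into the finite-dimensional Hilbert scale $\dot H^{\alpha}_h:=(V_h,\|L_h^{\sfrac{\alpha}{2}}\cdot\|_{L_2(\Gamma)})$. We have just shown that $P_h:\dot H^0\to\dot H^0_h$ and $P_h:\dot H^2\to\dot H^2_h$ are bounded uniformly in $h$.

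The main obstacle is identifying the interpolation spaces on both sides with exactly the norms that appear in \eqref{ineq:controlofLhbyL}. Both families are spectrally defined Hilbert scales, $\|L^{\sfrac{\alpha}{2}}\cdot\|$ and $\|L_h^{\sfrac{\alpha}{2}}\cdot\|$. For such scales, real (or complex) interpolation between $s=0$ and $s=2$ reproduces the scale, and the norms are exactly equivalent with constants independent of the operator; this is the diagonal-operator result in \citep{ChandlerWilde2015interpolation}. In particular the constants do not depend on $h$. With this identification, the interpolation theorem (Theorem~3.5 in \citep{ChandlerWilde2015interpolation}, as used before) yields $\|L_h^{\sfrac{\alpha}{2}}P_h\phi\|_{L_2(\Gamma)}\lesssim\|L^{\sfrac{\alpha}{2}}\phi\|_{L_2(\Gamma)}$ for all $\alpha\in[0,2]$.

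An alternative for this last step is the Heinz--Kato (Löwner--Heinz) inequality. Applied to the endpoint bound $P_hL_h^2P_h\lesssim L^2$, it gives $(P_hL_h^2P_h)^{\sfrac{\alpha}{2}}\lesssim L^{\alpha}$, which is the same conclusion.
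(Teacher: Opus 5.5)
Your proposal is correct and follows the paper's proof. Both establish the endpoint $\alpha=2$ through $L_hG_{h,2}\phi=P_hL\phi$, the inverse inequality of Lemma~\ref{lemma:boundwithhminus2} and the $h^2$ approximation bound, and then interpolate $P_h$ between the two spectrally defined scales with $h$-independent constants. The differences are cosmetic: you bound $\|P_h\phi-G_{h,2}\phi\|_{L_2(\Gamma)}$ through $P_h(\phi-G_{h,2}\phi)$ rather than by the triangle inequality, and you identify the interpolation spaces via \citep{ChandlerWilde2015interpolation} (or Heinz--Kato) rather than \citep[Theorem~4.36]{Lunardi2018Interpolation}.
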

\begin{proof}
     For $\alpha = 0$, the result is immediate. For $\alpha = 2$, let $\phi\in D(L) = \dot{H}^2(\Gamma)$ and $v_h\in V_h$. It follows that $(L_hG_{h,2}\phi,v_h)_{L_2(\Gamma)} = \mathfrak{a}_{2}(G_{h,2}\phi,v_h)  = \mathfrak{a}_{2}(\phi,v_h)  = (L\phi,v_h)_{L_2(\Gamma)} = (P_hL\phi,v_h)_{L_2(\Gamma)}$. Since this holds for all $v_h\in V_h$, we have that $L_hG_{h,2}\phi = P_hL\phi$ for all $\phi\in \dot{H}^2(\Gamma)$. This and the contraction property of $P_h$ yields $\|L_hP_h\phi\|_{L_2(\Gamma)} \leq \|L\phi\|_{L_2(\Gamma)}  +  \|L_h(P_h\phi - G_{h,2}\phi)\|_{L_2(\Gamma)}$. Since $P_h\phi - G_{h,2}\phi\in V_h$, Lemma \ref{lemma:boundwithhminus2} implies that $\|L_h(P_h\phi - G_{h,2}\phi)\|_{L_2(\Gamma)}  \lesssim h^{-2}\|P_h\phi - G_{h,2}\phi\|_{L_2(\Gamma)}$. By Propositions \ref{proposition:with_projection} and \ref{proposition:with_elliptic_projection}, we have that $\|P_h\phi - G_{h,2}\phi\|_{L_2(\Gamma)}  \lesssim h^2\|L\phi\|_{L_2(\Gamma)} + h^2\|L\phi\|_{L_2(\Gamma)}$. By combining the previous three inequalities, we obtain \eqref{ineq:controlofLhbyL} with $\alpha=2$. It remains to consider $0<\alpha<2$. Let $\beta=\sfrac{\alpha}{2}$. We use interpolation between the cases $\alpha=0$ and $\alpha=2$. The two endpoint estimates show that $P_h:(L_2(\Gamma),\|\cdot\|_{L_2(\Gamma)})\to (V_h,\|\cdot\|_{L_2(\Gamma)})$ and $P_h:(D(L),\|L\cdot\|_{L_2(\Gamma)})\to (V_h,\|L_h\cdot\|_{L_2(\Gamma)})$ are bounded uniformly in $h$. Therefore, $P_h$ is a bounded couple map (see \citep{ChandlerWilde2015interpolation}), and for every $0<\beta<1$, $P_h:[L_2(\Gamma),D(L)]_\beta\to[V_h,V_h]_\beta$ is bounded uniformly in $h$. Since $L$ and $L_h$ are positive self-adjoint operators, \citep[Theorem~4.36]{Lunardi2018Interpolation} yields $[L_2(\Gamma),D(L)]_\beta=D(L^\beta)$ and $[V_h,V_h]_\beta=D(L_h^\beta)=V_h$, where the interpolation norms are equivalent to the corresponding graph norms. Moreover, since the spectra of $L$ and $L_h$ are bounded below by $\kappa^2>0$, these graph norms are equivalent to $\|L^\beta\cdot\|_{L_2(\Gamma)}$ and $\|L_h^\beta\cdot\|_{L_2(\Gamma)}$, respectively. Consequently, for every $\phi\in D(L^\beta)$, $\|L_h^\beta P_h\phi\|_{L_2(\Gamma)}\lesssim\|L^\beta\phi\|_{L_2(\Gamma)}$. 
    \end{proof}

We now introduce both semidiscrete and fully discrete schemes for approximating the solution to problem~\eqref{weakformofentireequation}.
Let $\alpha>0$ and $U^\tau$ denote the sequence of time-discrete approximations of the solution to problem \eqref{weakformofentireequation}. The scheme~\eqref{system:semidiscrete_scheme}, referred to as the \emph{semidiscrete scheme}, determines the sequence $U^\tau$ of elements of $\dot{H}^{\sfrac{\alpha}{2}}(\Gamma)$ by requiring that the iterate $U^{k+1}$ satisfies
\begin{equation}
\label{system:semidiscrete_scheme}
    \left\{
\begin{aligned}
    \langle\delta U^{k+1},\phi\rangle + \mathfrak{a}_{\alpha}(U^{k+1},\phi)  &= \langle f^{k+1},\phi\rangle , \quad\forall\phi\in \dot{H}^{\sfrac{\alpha}{2}}(\Gamma),\quad k=0,\dots, N-1, \\
    U^0 &= u_0,
\end{aligned}
\right.
\end{equation}
where $f^{k+1} = \dfrac{1}{\tau}\int_{t_k}^{t_{k+1}}f(t)\dd t$.

We now extend the construction to include spatial discretization. Let $\alpha\in(0,2]$ and $U_h^\tau$ denote the sequence of approximations of the solution to problem \eqref{weakformofentireequation} at each time step on a mesh indexed by $h$. The scheme~\eqref{system:fully_discrete_scheme}, referred to as the \emph{fully discrete scheme}, determines $U_h^{\tau}\subset V_h$ by requiring that the iterate $U^{k+1}_h$ satisfies
\begin{equation}
\label{system:fully_discrete_scheme}
    \left\{
\begin{aligned}
    \langle\delta U_h^{k+1},\phi\rangle + \mathfrak{a}_{h,\alpha}(
        U_h^{k+1},\phi) &= \langle f^{k+1},\phi\rangle, \quad\forall\phi\in V_h,\quad k=0,\dots, N-1, \\
    U^0_h &= P_hu_0.
\end{aligned}
\right.
\end{equation}
\section{Time and space error analysis}
\label{timeandspaceerroranalysis}
In this section, we derive rigorous error estimates that quantify the accuracy of the numerical scheme with respect to both temporal and spatial discretizations.
\begin{theorem}
\label{theorem:first_part_of_main}
    Let $\alpha>0$ and $u$ be the solution to \eqref{weakformofentireequation}. Let $\tilde{U}^\tau$ be the piecewise constant function (defined via \eqref{def:piecewise_constant}) constructed from the solution to \eqref{system:semidiscrete_scheme}. If $f\in L_\infty(0,T;L_2(\Gamma))$ and $u_0\in\dot{H}^{\sfrac{\alpha}{2}}(\Gamma)$, then $\|u-\tilde{U}^\tau\|_{L_2(0,T;L_2(\Gamma))}\lesssim \tau$.
\end{theorem}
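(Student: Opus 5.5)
The plan is to prove the estimate mode by mode, using the eigenpairs $\{(\lambda_j,e_j)\}$ of $L$, and then sum over modes by Parseval. Write $\mu_j=\lambda_j^{\sfrac{\alpha}{2}}$, $u_{0,j}=(u_0,e_j)_{L_2(\Gamma)}$, $f_j(t)=(f(t),e_j)_{L_2(\Gamma)}$, and $f^k_j=(f^k,e_j)_{L_2(\Gamma)}$.

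\emph{Modal formulas.} By \eqref{eq:sol_reprentation}, $u_j(t)=e^{-\mu_j t}u_{0,j}+\int_0^t e^{-\mu_j(t-r)}f_j(r)\dd r$. Testing \eqref{system:semidiscrete_scheme} with $\phi=e_j$ gives the scalar backward Euler recursion $(1+\mu_j\tau)U^{k+1}_j=U^k_j+\tau f^{k+1}_j$, so
\begin{align*}
U^k_j=(1+\mu_j\tau)^{-k}u_{0,j}+\tau\sum_{m=1}^k(1+\mu_j\tau)^{-(k-m+1)}f^m_j .
\end{align*}
By linearity I split $u-\tilde U^\tau$ into a homogeneous part (data $u_0$, $f=0$) and an inhomogeneous part (data $u_0=0$, $f$). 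I will show:
\begin{itemize}
\item the homogeneous part of mode $j$ contributes at most $C\tau^2\mu_j u_{0,j}^2$ to $\|u_j-\tilde U_j^\tau\|^2_{L_2(0,T)}$;
\item the inhomogeneous part of mode $j$ contributes at most $C\tau^2\|f_j\|^2_{L_2(0,T)}$, up to a term controlled by $\tau^2\|f_j\|^2_{L_\infty(0,T)}$ if needed.
\end{itemize}
Summing over $j$ then gives $\|u-\tilde U^\tau\|^2_{L_2(0,T;L_2(\Gamma))}\lesssim\tau^2\bigl(\|u_0\|^2_{\dot H^{\sfrac{\alpha}{2}}(\Gamma)}+T\|f\|^2_{L_\infty(0,T;L_2(\Gamma))}\bigr)$, which is the claim.

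\emph{Homogeneous part.} On $(t_{k-1},t_k]$ I write
\begin{align*}
e^{-\mu t}-(1+\mu\tau)^{-k}=\bigl(e^{-\mu t}-e^{-\mu t_k}\bigr)+\bigl(e^{-\mu t_k}-(1+\mu\tau)^{-k}\bigr).
\end{align*}
For the first term, $|e^{-\mu t}-e^{-\mu t_k}|\le\min(1,\mu\tau)e^{-\mu t_{k-1}}$. Summing the squares over $k$ with weight $\tau$ gives at most $\min(1,\mu\tau)^2(\tau+\mu^{-1})$. Checking the two cases $\mu\tau\le1$ and $\mu\tau>1$ shows this is $\lesssim\mu\tau^2$. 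For the second term I use the standard A-stability/smoothing estimate for the backward Euler rational function,
\begin{align*}
|e^{-kz}-(1+z)^{-k}|\lesssim\min\bigl(1,kz^2\bigr)\,c^{k}\quad\text{for some fixed }c<1\text{ when }z\text{ is large},
\end{align*}
or more precisely $\lesssim k^{-1}$ uniformly in $z$, together with $\lesssim kz^2e^{-ckz}$ for $z\le1$. This yields $\tau\sum_k|\cdot|^2\lesssim\mu\tau^2$. Multiplying by $u_{0,j}^2$ and summing over $j$ gives $\tau^2\|u_0\|^2_{\dot H^{\sfrac{\alpha}{2}}(\Gamma)}$; this is exactly where the regularity $u_0\in\dot H^{\sfrac{\alpha}{2}}(\Gamma)$ is used.

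\emph{Inhomogeneous part.} I split the error into two pieces:
\begin{itemize}
\item $u-\tilde u^\tau$, where $\tilde u^\tau$ is the piecewise constant interpolant of $u$ at the nodes;
\item the nodal errors $u(t_k)-U^k$.
\end{itemize}
For the first piece, $|u_j(t)-u_j(t_k)|\le\int_{t_{k-1}}^{t_k}|u_j'|$, so $\|u_j-\tilde u_j^\tau\|_{L_2(0,T)}\le\tau\|u_j'\|_{L_2(0,T)}$. Since $u_j'=f_j-\mu_ju_j$ and Young's convolution inequality with $\int_0^\infty\mu e^{-\mu s}\dd s=1$ gives $\mu_j\|u_j\|_{L_2}\le\|f_j\|_{L_2}$, we get $\|u_j'\|_{L_2}\le2\|f_j\|_{L_2}$. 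This is scalar maximal regularity, uniform in $\mu_j$. For the nodal errors $\varepsilon^k=u_j(t_k)-U^k_j$, I subtract the exact one-step relation $u_j(t_{k+1})=e^{-\mu\tau}u_j(t_k)+\int_{t_k}^{t_{k+1}}e^{-\mu(t_{k+1}-r)}f_j\dd r$ from the scheme. This produces a stable recursion $\varepsilon^{k+1}=(1+\mu\tau)^{-1}\varepsilon^k+\rho^{k+1}$ with local defect
\begin{align*}
\rho^{k+1}=\bigl(e^{-\mu\tau}-(1+\mu\tau)^{-1}\bigr)u_j(t_k)+\int_{t_k}^{t_{k+1}}\Bigl(e^{-\mu(t_{k+1}-r)}-(1+\mu\tau)^{-1}\Bigr)f_j\dd r .
\end{align*}
The first coefficient is $\lesssim\min(\mu\tau,(\mu\tau)^2)$. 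The bracket in the integral is $\lesssim\min(1,\mu\tau)$. I then sum the recursion using the discrete convolution bound $\tau\sum_k(1+\mu\tau)^{-k}\le\mu^{-1}+\tau$, applied in a discrete Young inequality. The goal is $\tau\sum_k|\varepsilon^k|^2\lesssim\tau^2\bigl(\|f_j\|_{L_2}^2+\mu^2\|u_j\|_{L_2}^2\bigr)\lesssim\tau^2\|f_j\|^2_{L_2}$. If needed, I will bound $\sup_k|u_j(t_k)|$ and the local integrals through $\|f_j\|_{L_\infty}$, which is where the assumption $f\in L_\infty(0,T;L_2(\Gamma))$ enters.

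\emph{Main obstacle.} I expect the hardest step to be this nodal-error bound, because it must be uniform in $\mu_j$ to order $\tau$ and not merely $\tau^{\sfrac{1}{2}}$. The case split $\mu\tau\le1$ versus $\mu\tau>1$ has to be handled carefully in the discrete convolution. If the direct estimate loses a factor, I will instead compare $U^k$ with the backward Euler discretisation applied to the exact nodal values. Its defect is $\delta u(t_{k+1})+\mu u(t_{k+1})-f^{k+1}=\frac1\tau\int_{t_k}^{t_{k+1}}\mu\bigl(u(t_{k+1})-u(r)\bigr)\dd r$, which is $O\bigl(\mu\,\tau^{\sfrac{1}{2}}\|u'\|_{L_2(t_k,t_{k+1})}\bigr)$. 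Discrete maximal regularity of the recursion, $\mu\|\cdot\|_{\ell^2}\lesssim\|\cdot\|_{\ell^2}$, then converts this into an $\ell^2$ error of order $\tau\|u'\|_{L_2}\lesssim\tau\|f\|_{L_2}$.
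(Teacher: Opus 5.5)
Your proof is correct, and it takes a genuinely different route from the paper. The paper gives no argument on $\Gamma$; it only points to Glusa and Ot\'arola's Theorem~5.6 ``with the obvious adaptations'' to the variational framework. You instead diagonalize with the eigenpairs of $L$, reduce to the scalar backward Euler recursion for $u_j'+\mu_ju_j=f_j$, prove bounds uniform in $\mu_j$, and sum by Parseval.

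Your fallback argument in fact proves the whole theorem by itself:
\begin{itemize}
\item Since $U^0=u_0$, the nodal error $\varepsilon_j^k=u_j(t_k)-U_j^k$ vanishes at $k=0$ for the full problem, not only the inhomogeneous part.
\item The defect satisfies $|d^{k+1}|\le\mu_j\tau^{1/2}\|u_j'\|_{L_2(t_k,t_{k+1})}$.
\item Testing the error recursion with $\mu_j\varepsilon_j^{k+1}$ gives $\mu_j^2\tau\sum_k|\varepsilon_j^k|^2\le\tau\sum_k|d^k|^2$.
\item Combined with $\|u_j-\tilde u_j^\tau\|_{L_2(0,T)}\le\tau\|u_j'\|_{L_2(0,T)}$, this yields $\|u-\tilde U^\tau\|_{L_2(0,T;L_2(\Gamma))}\le2\tau\|\partial_tu\|_{L_2(0,T;L_2(\Gamma))}$.
\item The bound $\|u_j'\|_{L_2(0,T)}\le(\mu_j/2)^{1/2}|u_{0,j}|+2\|f_j\|_{L_2(0,T)}$ then finishes the proof.
\end{itemize}
So your separate A-stability analysis of $e^{-kz}-(1+z)^{-k}$ for the homogeneous part is correct but unnecessary.

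What your route buys: a short, self-contained proof with explicit constants that uses only the spectral calculus set up in Section~2. It also shows that $f\in L_2(0,T;L_2(\Gamma))$ already suffices. A variational argument of the kind the paper cites does not need an eigenbasis that diagonalizes both the equation and the scheme, so it is more robust to changes of the operator. However, the paper does not carry that argument out on $\Gamma$.

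Two remarks on the parts of your write-up you would discard.
\begin{itemize}
\item In the direct recursion estimate, the bound $|e^{-\mu\tau}-(1+\mu\tau)^{-1}|\lesssim\min(\mu\tau,(\mu\tau)^2)$ is too weak when $\mu\tau>1$. There the amplification factor $\sum_{n\ge0}(1+\mu\tau)^{-n}$ is $O(1)$, so your bound only controls that contribution by $O(\mu\tau\cdot\tau^2\|f_j\|^2_{L_2(0,T)})$, which is not uniform in $j$. Use instead $0\le(1+z)^{-1}-e^{-z}\le\min(z^2/2,(1+z)^{-1})$.
\item Your hedge of controlling modal terms through $\|f_j\|_{L_\infty(0,T)}$ would fail at the summation step. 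The sum $\sum_j\|f_j\|^2_{L_\infty(0,T)}$ is not bounded by $\|f\|^2_{L_\infty(0,T;L_2(\Gamma))}$, because the supremum in time and the sum over modes do not commute.
\end{itemize}
Neither point affects the fallback argument, which is the one to write up.
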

\begin{proof}
The proof follows the same arguments as in \citep[Theorem~5.6]{Glusa2021errorestimates}, with the obvious adaptations to the metric graph setting and the corresponding variational framework on $\Gamma$. We therefore omit the details.
\end{proof}
\begin{theorem}
\label{theorem:theorem_second_part_of_main}
Let $\alpha\in(0,2]$ and $U^\tau$ and $U_h^\tau$ be the solutions to \eqref{system:semidiscrete_scheme} and \eqref{system:fully_discrete_scheme}, respectively. Suppose that $u_0\in\dot{H}^{\alpha}(\Gamma)$. Then, for every sufficiently small $\epsilon>0$, we have $\norm{U^\tau-U_h^\tau}_{\ell^2(L_2(\Gamma))}\lesssim h^{\alpha-\epsilon}K_\tau$, where $K_\tau = |\ln(\tau)|^2$ if $f\in L_2(0,T;L_2(\Gamma))$ and $K_\tau = |\ln(\tau)|$ if $f\in L_2(0,T;\dot{H}^{\alpha}(\Gamma))$.
\end{theorem}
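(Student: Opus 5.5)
Since $U^\tau$ and $U_h^\tau$ come from the same backward Euler recursion, I would compare them through their discrete solution operators, in the spirit of \citep{Glusa2021errorestimates}. Set $r(z)=(1+z)^{-1}$ and write
\[
U^{k}=r(\tau L^{\sfrac{\alpha}{2}})^k u_0+\tau\sum_{j=1}^{k} r(\tau L^{\sfrac{\alpha}{2}})^{k-j+1}f^j,
\qquad
U_h^{k}=r(\tau L_h^{\sfrac{\alpha}{2}})^k P_hu_0+\tau\sum_{j=1}^{k} r(\tau L_h^{\sfrac{\alpha}{2}})^{k-j+1}P_hf^j .
\]
With $E_h^k:=r(\tau L^{\sfrac{\alpha}{2}})^k-r(\tau L_h^{\sfrac{\alpha}{2}})^kP_h$, the error $U^k-U_h^k$ is $E_h^ku_0$ plus a discrete convolution of $E_h^{k-j+1}$ with $f^j$. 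Everything then reduces to a nonsmooth-data error estimate for $E_h^k$ that decays in $t_k$.

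\textbf{Step 1 (key operator estimate).} I would prove that for $0\le\gamma\le\alpha$ and $k\ge1$,
\[
\|E_h^k v\|_{L_2(\Gamma)}\lesssim h^{\alpha-\epsilon}\, t_k^{-(\alpha-\gamma)/\alpha}\,\|v\|_{\dot H^{\gamma}(\Gamma)} ,
\]
possibly with an extra logarithm. The tool is the resolvent representation: for $k\ge1$, $r(\tau\mu^{\sfrac{\alpha}{2}})^k=\frac{1}{2\pi i}\int_{\mathcal C} r(\tau z)^k (z-\mu^{\sfrac{\alpha}{2}})^{-1}\,dz$ on a sectorial contour $\mathcal C$. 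This reduces $E_h^k$ to the resolvent differences $(z-L^{\sfrac{\alpha}{2}})^{-1}-(z-L_h^{\sfrac{\alpha}{2}})^{-1}P_h$. I would bound these by $h^{\alpha-\epsilon}$ times suitable powers of $|z|$. To do so, I would write $(z-L^{\sfrac{\alpha}{2}})^{-1}$ via $L^{-\sfrac{\alpha}{2}}$, using the identity $L_h^{-\sfrac{\alpha}{2}}P_h$ versus $L^{-\sfrac{\alpha}{2}}$. The fractional-inverse error would be estimated with the Balakrishnan formula $L^{-\beta}=\frac{\sin\pi\beta}{\pi}\int_0^\infty \lambda^{-\beta}(\lambda+L)^{-1}d\lambda$, combined with the $L_2$ Ritz-projection bound $\|(L^{-1}-L_h^{-1}P_h)v\|_{L_2}\lesssim h^2\|v\|_{L_2}$. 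That bound follows from Proposition~\ref{proposition:with_elliptic_projection} with $\alpha=2$ and the identity $L_hG_{h,2}=P_hL$ from the proof of Proposition~\ref{prop:controlofLhbyL}.

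The $\epsilon$ loss comes from splitting the $\lambda$-integral at $\lambda\sim h^{-2}$. At the same point I would use Proposition~\ref{prop:controlofLhbyL} and Lemma~\ref{lemma:boundwithhminus2} to control high discrete frequencies, and uniform stability of $r(\tau L_h^{\sfrac{\alpha}{2}})^k$ (contraction) for the $\gamma=\alpha$ endpoint. The contour integral then produces the factor $t_k^{-(\alpha-\gamma)/\alpha}$, and the constants blow up only as $\epsilon\to0$.

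\textbf{Step 2 (assembling).} For the initial datum I take $\gamma=\alpha$, which gives $\|E_h^ku_0\|\lesssim h^{\alpha-\epsilon}\|u_0\|_{\dot H^\alpha}$ uniformly in $k$, so its $\ell^2$ norm is bounded. For the source term with $f\in L_2(0,T;\dot H^{\alpha})$, I again take $\gamma=\alpha$ but must sum $\tau\sum_j\|E_h^{k-j+1}f^j\|$. The uniform bound alone is insufficient for an $\ell^2$ estimate independent of $T/\tau$ scaling, so I would use a version of Step~1 carrying an extra factor: $E_h^k$ applied to $\tau L^{\sfrac{\alpha}{2}}$-smoothed data decays like $(k)^{-1}$. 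Summing $\sum_{m\le N}m^{-1}\sim|\ln\tau|$ with discrete Young's inequality then gives $K_\tau=|\ln\tau|$. For $f\in L_2(0,T;L_2)$, I take $\gamma=0$, which gives a kernel $\sim t_m^{-1}$. Summing produces one logarithm from the kernel and one from the $\epsilon$-threshold/contour splitting, hence $|\ln\tau|^2$. Here I use that $\|f^j\|$ is controlled by $f$ in $\ell^2$ via Remark~\ref{discrete_to_continuous} and Jensen's inequality.

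\textbf{Main obstacle.} The main difficulty is Step~1, namely the sharp nonsmooth-data resolvent estimate for the \emph{fractional} discrete operator, uniformly in $z$ on the contour, with only an $h^{-\epsilon}$ loss. I would first try to carry it out entirely spectrally by comparing $\lambda_{j,h}$ with $\lambda_j$ using $\lambda_j\le\lambda_{j,h}$. If that fails, I would fall back on the Balakrishnan–Ritz argument described above, and accept an extra logarithm that can be absorbed into $h^{-\epsilon}$.
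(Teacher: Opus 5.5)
Your plan can be completed, and it takes a genuinely different route from the paper's.

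\textbf{How the paper argues.} The paper uses no contour integrals. Since everything is self-adjoint, it iterates the strong forms with $G(x)=x/(x+\tau)$; note $G(L^{-\beta})=r(\tau L^{\beta})$, so its solution operators are the same as yours. It then applies the telescoping identity $X^{n+1}-Y^{n+1}=\sum_{j=0}^{n}X^{n-j}(X-Y)Y^{j}$. Finally it observes that on $V_h$ one has $G(A)-G(A_h)P_h=\tau R\,(L^{-\beta}-L_h^{-\beta}P_h)\,R_h$, where $R=(A+\tau I)^{-1}$ and $R_h=(A_h+\tau I)^{-1}$. The only finite element input is $\|L^{-\beta}-L_h^{-\beta}P_h\|_{\mathcal{L}(L_2(\Gamma))}\lesssim h^{\alpha-\epsilon}$, quoted from \citep{bolin2023regularity}. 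Everything else is the scalar bound $\sup_{x\geq0}x^{\beta}(1+\tau x^{\beta})^{-(m+1)}\leq\tau^{-1}(m+1)^{-1}$, harmonic sums (each costing one $|\ln\tau|$), and discrete Young.

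\textbf{Your Step~1.} This is the contour analogue of the same mechanism, and the obstacle you flag is milder than you expect. With $A=L^{-\beta}$ and $A_h=L_h^{-\beta}P_h$ one has exactly $(z-L^{\beta})^{-1}-(z-L_h^{\beta})^{-1}P_h=(zA-I)^{-1}(A_h-A)(zA_h-I)^{-1}$. The outer factors are uniformly bounded on a sector boundary, so the resolvent difference is $\lesssim\min\{h^{\alpha-\epsilon},|z|^{-1}\}$ there.
\begin{itemize}
\item You need the $|z|^{-1}$ branch when $k=1$, where $|r(\tau z)|$ alone is not integrable.
\item For $\gamma=\alpha$, the factor $z^{-1}$ forces the contour to pass between $0$ and $\kappa^{\alpha}$.
\end{itemize}
Both cost at most a factor $|\ln h|$, absorbed into $h^{-\epsilon}$.

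\textbf{Your Balakrishnan step.} The same algebra applied to $B=L^{-1}$, $B_h=L_h^{-1}P_h$ gives $\|(\lambda+L)^{-1}-(\lambda+L_h)^{-1}P_h\|\leq\|L^{-1}-L_h^{-1}P_h\|\lesssim h^{2}$ uniformly in $\lambda\geq0$. You only stated the case $\lambda=0$, which is not enough for Balakrishnan. Splitting at $\lambda\sim h^{-2}$ then gives $h^{\alpha}$ with no $\epsilon$ loss, so the $\epsilon$ does not originate there.

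\textbf{What each route buys.} Yours gives a stronger intermediate result, the time-weighted nonsmooth-data bounds. These should yield $K_\tau=|\ln\tau|$ for $L_2$ data and no logarithm at all for $\dot H^{\alpha}$ data. The paper's route is completely elementary once the operator bound is granted.

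\textbf{Two smaller corrections.}
\begin{itemize}
\item For $f\in L_2(0,T;\dot H^{\alpha}(\Gamma))$ the uniform bound is not insufficient. By Cauchy--Schwarz, $\tau\sum_{j\leq k}\|E_h^{k-j+1}f^{j}\|_{L_2(\Gamma)}\lesssim h^{\alpha-\epsilon}\sqrt{T}\,\|f^{\tau}\|_{\ell^2(\dot H^{\alpha}(\Gamma))}$. This is precisely what the paper does; there the factor $\tau^2$ absorbs $\|c\|_{\ell^1}^2\lesssim N^2(1+\ln N)^2$. Your ``smoothed data decaying like $k^{-1}$'' device is therefore unnecessary and, as written, not well defined.
\item A purely spectral comparison using $\lambda_j\leq\lambda_{j,h}$ cannot work by itself, because the eigenfunctions $e_j$ and $e_{j,h}$ differ. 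The resolvent/Balakrishnan argument is the one to use.
\end{itemize}
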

\begin{proof}
    Using the regularity of $u$ from Theorem \ref{theorem:existenceuniquenessandregularity}, the semidiscrete scheme \eqref{system:semidiscrete_scheme} reduces to $(w,\phi)_{L_2(\Gamma)}=0$ for all $\phi\in\dot{H}^{\sfrac{\alpha}{2}}(\Gamma)$, where $w = \delta U^{k+1}+L^{\sfrac{\alpha}{2}}U^{k+1}-f^{k+1}\in L_2(\Gamma)$. Since $\dot{H}^{\sfrac{\alpha}{2}}(\Gamma)$ is dense in $L_2(\Gamma)$, we conclude $w=0$ in $L_2(\Gamma)$. Thus both schemes \eqref{system:semidiscrete_scheme} and \eqref{system:fully_discrete_scheme} admit strong forms in $L_2(\Gamma)$, given by
        \begin{align}
          (L^{-\beta}+\tau  I)U^{k+1}&=L^{-\beta}(U^{k}+\tau f^{k+1}),\notag\\
          (L_h^{-\beta}+\tau  I_{V_h})U_h^{k+1}&=L_h^{-\beta}(U_h^{k}+\tau P_hf^{k+1}), \label{weak_form_m2}
    \end{align}
    respectively, which after iterating yields
    \begin{align*}
            U^{k+1}&=G^{k+1}(A)u_0+\tau\textstyle\sum_{i=0}^kG^{k+1-i}(A)f^{i+1},\quad k = 0,\dots, N-1,\\
            U_h^{k+1}&=G^{k+1}(A_h)P_h u_0+\tau\textstyle\sum_{i=0}^kG^{k+1-i}(A_h)P_hf^{i+1},\quad k = 0,\dots, N-1,
    \end{align*}
    where $\beta = \sfrac{\alpha}{2}$, $G(x) = x/(x+\tau)$, $A = L^{-\beta}$, and $A_h = L_h^{-\beta}$. Plugging $U^{k+1}$ and $U_{h}^{k+1}$ into $\norm{U^\tau-U_h^\tau}^2_{\ell^2(L_2(\Gamma))} = \tau\sum_{k=0}^{N-1}\norm{U^{k+1}-U_{h}^{k+1}}^2_{L_2(\Gamma)}$, using that $G^{k}(A_h)P_h \phi = (G(A_h)P_h)^{k} \phi$ for all $k\in\mathbb{N}$ and $\phi\in L_2(\Gamma)$, and applying the inequality $(a+b)^2\le2a^2+2b^2$, we obtain
    \begin{align*}
        \norm{U^\tau-U_{h}^\tau}^2_{\ell^2(L_2(\Gamma))} &\leq 2\tau\textstyle\sum_{k=0}^{N-1}\norm{(G^{k+1}(A)-(G(A_h)P_h)^{k+1})u_0}^2_{L_2(\Gamma)}\\
        &+2\tau\textstyle\sum_{k=0}^{N-1}\norm{\tau\sum_{i=0}^k(G^{k+1-i}(A)-(G(A_h)P_h)^{k+1-i})f^{i+1}}^2_{L_2(\Gamma)}\\
        &=:E^2_1+E^2_2.
    \end{align*}
     Let $G_h =G(A_h)P_h$ and $\phi\in L_2(\Gamma)$. Using the telescoping identity $X^{n+1}-Y^{n+1} = \sum_{j=0}^{n} X^{n-j}(X-Y)Y^j$ with $X =G(A)$ and $Y = G_h$ yields
    \begin{align}
    \label{eq:diffGkplus1AminusGAhPhKplus1}
    \textstyle
        (G^{n+1}(A)-G_h^{n+1})\phi = \sum_{j=0}^{n} G^{n-j}(A)(G(A) - G_h)G^{j}(A_h)P_h\phi.
    \end{align}
    Let $R = (A+\tau I)^{-1}$ and $R_h = (A_h+\tau I)^{-1}$. Since $G(A) = AR = I-\tau R$ and $G(A_h) = A_hR_h =  I-\tau R_h$, it follows that $G(A) - G_h = (I-P_h) + \tau(R_hP_h-R) = (I-P_h) -\tau R_h(I-P_h) + \tau(R_h-R)$. Consequently,
    \begin{align}
    \label{eq:diffGAminusGAhPh}
        (G(A)\! - \!G_h) G^{j}(A_h)P_h\phi = \tau(R_h\!-\!R) G^{j}(A_h)P_h\phi = \tau R(A\!-\!A_h)R_h G^{j}(A_h)P_h\phi,
    \end{align}
    where the terms involving $(I-P_h)$ vanish since $G^{j}(A_h)P_h\phi\in V_h$, and the final equality follows from the resolvent identity. Replacing $A-A_h = (L^{-\beta} - L_h^{-\beta}P_h)-L_h^{-\beta}(I-P_h)$ into \eqref{eq:diffGAminusGAhPh} yields $(G(A) - G_h) G^{j}(A_h)P_h\phi = \tau R(L^{-\beta} - L_h^{-\beta}P_h)R_h G^{j}(A_h)P_h\phi$, where the term involving $(I-P_h)$ vanishes because $R_hG^{j}(A_h)P_h\phi\in V_h$. Therefore,
    \begin{align*}
        \textstyle
        (G^{n+1}(A)-G_h^{n+1})\phi  = \tau\sum_{j=0}^{n} G^{n-j}(A)R(L^{-\beta} - L_h^{-\beta}P_h)R_h G^{j}(A_h)P_h\phi.
\end{align*}
By \citep[Lemma~6.4(a)]{bolin2023regularity}, taking $s=0$ in their notation, we obtain
\begin{align}
\label{eq:difflastbeforenorm}
    \textstyle
        \|(G^{n+1}(A)-G_h^{n+1})\phi\|_{L_2(\Gamma)}  \!\leq\! \tau h^{\alpha-\epsilon}\sum_{j=0}^{n}\|G^{n-j}(A)R\|_{\mathcal L(L_2(\Gamma))}\|R_h G^{j}(A_h)P_h\phi\|_{L_2(\Gamma)}
\end{align}
for all sufficiently small $\epsilon>0$. Observe that $G^{n-j}(A)R = L^\beta(I+\tau L^\beta)^{-(n-j+1)}$ and $R_hG^j(A_h) = (I+\tau L_h^\beta)^{-(j+1)}L_h^\beta$. Motivated by these identities, define $\varphi_m(x)= x^\beta(1+\tau x^\beta)^{-(m+1)}$, where $m\in\mathbb{N}_0$ and $x\geq0$. For $m\in\mathbb{N}$, this function attains its maximum value $\varphi_m(x^*) = \tau^{-1}p_m$ at $x^* = (m\tau)^{-\sfrac{1}{\beta}}$, where $p_m:=m^m(m+1)^{-(m+1)}$ satisfies $p_m\leq(m+1)^{-1}$.

\paragraph{Estimate for $E_1^2$} Let $d_u=\|(G^{k+1}(A)-G_h^{k+1})u_0\|_{L_2(\Gamma)}$. Since $R_h G^{j}(A_h)=(I+\tau L_h^{\beta})^{-(j+1)}L_h^{\beta}$, $\sup_{\lambda\in\sigma(L_h)}(1+\tau \lambda^{\beta})^{-(j+1)}\leq 1$, and Proposition \ref{prop:controlofLhbyL} applies, 
    \begin{align*}
        \|R_h G^{j}(A_h)P_hu_0\|_{L_2(\Gamma)} \leq \|L_h^{\beta}P_hu_0\|_{L_2(\Gamma)}\lesssim \|u_0\|_{\dot{H}^{2\beta}(\Gamma)}.
    \end{align*} 
    By taking $n=k$ and $\phi=u_0$ in \eqref{eq:difflastbeforenorm}, and using the above inequality, we obtain $d_u \leq \tau h^{\alpha-\epsilon}\|u_0\|_{\dot{H}^{2\beta}(\Gamma)}\sum_{j=0}^{k}\tau^{-1}p_{k-j}$. Since $\sum_{j=0}^{k}p_{k-j}\leq \sum_{j=0}^{k}(k-j+1)^{-1} = \sum_{\ell=1}^{k+1}\ell^{-1}\leq 1+\ln(k+1)$, we get $d_u \leq  h^{\alpha-\epsilon}\|u_0\|_{\dot{H}^{2\beta}(\Gamma)}(1+\ln(k+1))$. Therefore, 
    \begin{align}
    \label{eq:v1}
    \textstyle
        E_1^2  \leq 2\tau  h^{2(\alpha-\epsilon)}\|u_0\|^2_{\dot{H}^{2\beta}(\Gamma)}N(1+\ln(N))^2\lesssim  h^{2(\alpha-\epsilon)}|\ln(\tau)|^2\|u_0\|^2_{\dot{H}^{2\beta}(\Gamma)}.
    \end{align}

\paragraph{Estimate for $E_2^2$}
    For simplicity, let $e_{k+1} = \tau\sum_{i=0}^kd_i$, where $d_i = (G^{k+1-i}(A)-G_h^{k+1-i})f^{i+1}$. Setting $n=k-i$ and $\phi = f^{i+1}$ in \eqref{eq:difflastbeforenorm}, and using the standard logarithmic estimate for the harmonic numbers, yields
    \begin{align*}
    \textstyle
    \|d_i\|_{L_2(\Gamma)} &\leq \tau h^{\alpha-\epsilon}\|f^{i+1}\|_{L_2(\Gamma)}\textstyle\sum_{j=0}^{k-i} \tau^{-2}p_{k-i-j}p_j \\
    &\leq\tau^{-1}h^{\alpha-\epsilon}\|f^{i+1}\|_{L_2(\Gamma)}(1+\ln(k-i+1))(k-i+1)^{-1}.
    \end{align*}
    Since $(b*c)_k = \sum_{i\in\mathbb{Z}}b_ic_{k-i}$ with $b_m = \|f^{m+1}\|_{L_2(\Gamma)}$ and $c_m = (1+\ln(m+1))(m+1)^{-1}$ (after extending by zero), we obtain $\|e_{k+1}\|_{L_2(\Gamma)}\leq   h^{\alpha-\epsilon}(b*c)_k$. This and the discrete Young's inequality gives $E_2^2\leq 2\tau h^{2(\alpha-\epsilon)}\sum_{k=0}^{N-1}(b*c)^2_k \leq 2\tau h^{2(\alpha-\epsilon)}\|b*c\|^2_{\ell^2(\mathbb{Z})} \leq 2\tau h^{2(\alpha-\epsilon)} \|b\|^2_{\ell^2(\mathbb{Z})}\|c\|^2_{\ell^1(\mathbb{Z})}$. Since $\|b\|^2_{\ell^2(\mathbb{Z})} =\tau^{-1}\|f^\tau\|^2_{\ell^2(L_2(\Gamma))}$ and $\|c\|^2_{\ell^1(\mathbb{Z})} \leq (1+\ln(N))^4$, we finally arrive at
    \begin{align}
    \label{eq:v2}
        E_2^2\leq2\tau h^{2(\alpha-\epsilon)} \tau^{-1}\|f^\tau\|^2_{\ell^2(L_2(\Gamma))} (1+\ln(N))^4 \lesssim h^{2(\alpha-\epsilon)} \|f^\tau\|^2_{\ell^2(L_2(\Gamma))} |\ln(\tau)|^4.
    \end{align}
    Combining \eqref{eq:v1} and \eqref{eq:v2}, we obtain
    \begin{align*}
        \norm{U^\tau-U_{h}^\tau}_{\ell^2(L_2(\Gamma))}\lesssim h^{\alpha-\epsilon}|\ln(\tau)|^2\left(\norm{u_0}_{\dot{H}^{\alpha}(\Gamma)} + \norm{f^\tau}_{\ell^2(L_2(\Gamma))}\right).
    \end{align*}
    If $f\in L_2(0,T,\dot{H}^{\alpha}(\Gamma))$, then 
    \begin{align*}
    \textstyle
        \|d_i\|_{L_2(\Gamma)} \leq h^{\alpha-\epsilon}\|f^{i+1}\|_{\dot{H}^{2\beta}(\Gamma)}\sum_{j=0}^{k-i}p_{k-i-j} \leq h^{\alpha-\epsilon}\|f^{i+1}\|_{\dot{H}^{2\beta}(\Gamma)}(1+\ln(k-i+1)).
    \end{align*}
    and $\|e_{k+1}\|_{L_2(\Gamma)}\leq \tau h^{\alpha-\epsilon}(b*c)_k$, where now $b_m = \|f^{m+1}\|_{\dot{H}^{2\beta}(\Gamma)}$ and $c_m = 1+\ln(m+1)$. Since $\|b\|^2_{\ell^2(\mathbb{Z})} =\tau^{-1}\|f^\tau\|^2_{\ell^2(\dot{H}^{2\beta}(\Gamma))}$ and $\|c\|_{\ell^1(\mathbb{Z})} = \sum_{k=0}^{N-1}(1+\ln(k+1))\leq N(1+\ln(N))$, we obtain
    \begin{align}
    \label{eq:v3}
        E_2^2\leq  2\tau h^{2(\alpha-\epsilon)}\tau^{2}\|b\|^2_{\ell^2(\mathbb{Z})}\|c\|^2_{\ell^1(\mathbb{Z})}\lesssim h^{2(\alpha-\epsilon)}\|f^\tau\|^2_{\ell^2(\dot{H}^{2\beta}(\Gamma))}|\ln(\tau)|^2.
    \end{align}
    Combining \eqref{eq:v1} and \eqref{eq:v3}, we obtain
    \begin{align*}
        \norm{U^\tau-U_{h}^\tau}_{\ell^2(L_2(\Gamma))}\lesssim h^{\alpha-\epsilon}|\ln(\tau)|\left(\norm{u_0}_{\dot{H}^{\alpha}(\Gamma)} + \norm{f^\tau}_{\ell^2(\dot{H}^{\alpha}(\Gamma))}\right).
    \end{align*}
    \end{proof}
\begin{theorem}
\label{theorem:theorem_about_error_bounded_by_tau_plus_h_alpha}
Let $\alpha\in(0,2]$ and $u$ and $U_h^\tau$ be the solutions to \eqref{weakformofentireequation} and \eqref{system:fully_discrete_scheme}, respectively. Suppose that $u_0\in\dot{H}^{\alpha}(\Gamma)$. Then, for every sufficiently small $\epsilon>0$, we have $\norm{u-U_h^\tau}_{L_2(0,T;L_2(\Gamma))}\lesssim \tau+h^{\alpha-\epsilon}K_\tau$, where $K_\tau = |\ln(\tau)|^2$ if $f\in L_\infty(0,T;L_2(\Gamma))$ and $K_\tau = |\ln(\tau)|$ if $f\in L_\infty(0,T;\dot{H}^{\alpha}(\Gamma))$.
\end{theorem}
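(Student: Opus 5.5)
The plan is to split the error by the triangle inequality through the semidiscrete solution. Let $\tilde U^\tau$ and $\tilde U_h^\tau$ be the piecewise constant functions built via \eqref{def:piecewise_constant} from the solutions of \eqref{system:semidiscrete_scheme} and \eqref{system:fully_discrete_scheme}. Following the convention of Remark~\ref{discrete_to_continuous}, $\|u-U_h^\tau\|_{L_2(0,T;L_2(\Gamma))}$ means $\|u-\tilde U_h^\tau\|_{L_2(0,T;L_2(\Gamma))}$, and we write
\begin{align*}
\|u-\tilde U_h^\tau\|_{L_2(0,T;L_2(\Gamma))}\leq \|u-\tilde U^\tau\|_{L_2(0,T;L_2(\Gamma))}+\|\tilde U^\tau-\tilde U_h^\tau\|_{L_2(0,T;L_2(\Gamma))}.
\end{align*}
The first term is the pure time-discretization error. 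The second is the spatial error between the two discrete-in-time sequences.

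For the first term, invoke Theorem~\ref{theorem:first_part_of_main}. Its hypotheses are $f\in L_\infty(0,T;L_2(\Gamma))$ and $u_0\in\dot H^{\sfrac{\alpha}{2}}(\Gamma)$. The first holds in both cases of the present statement, since $L_\infty(0,T;\dot H^\alpha(\Gamma))\hookrightarrow L_\infty(0,T;L_2(\Gamma))$ by the embedding $\dot H^\alpha\hookrightarrow L_2$. The second holds because $u_0\in\dot H^\alpha(\Gamma)\hookrightarrow\dot H^{\sfrac{\alpha}{2}}(\Gamma)$. This gives a bound $\lesssim\tau$.

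For the second term, Remark~\ref{discrete_to_continuous} converts it to $\|U^\tau-U_h^\tau\|_{\ell^2(L_2(\Gamma))}$, to which Theorem~\ref{theorem:theorem_second_part_of_main} applies. Its hypotheses ($u_0\in\dot H^\alpha(\Gamma)$, and $f\in L_2(0,T;L_2(\Gamma))$ or $f\in L_2(0,T;\dot H^\alpha(\Gamma))$) follow from ours because $L_\infty(0,T;X)\hookrightarrow L_2(0,T;X)$ on the bounded interval $(0,T)$. It yields $h^{\alpha-\epsilon}K_\tau$ with the matching logarithmic factor: $|\ln\tau|^2$ in the first case and $|\ln\tau|$ in the second.

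One detail needs checking. The proof of Theorem~\ref{theorem:theorem_second_part_of_main} controls the constant through $\|f^\tau\|_{\ell^2(X)}$ with $f^{k+1}$ the time averages. By Jensen's inequality $\|f^{k+1}\|_X^2\le\tau^{-1}\int_{t_k}^{t_{k+1}}\|f\|_X^2\,dt$, so $\|f^\tau\|_{\ell^2(X)}\le\|f\|_{L_2(0,T;X)}\lesssim\|f\|_{L_\infty(0,T;X)}$, which is independent of $\tau$ and $h$. The only possible obstacle is that $u$ is continuous while $\tilde U^\tau$ is defined on the grid, but this is already absorbed in Theorem~\ref{theorem:first_part_of_main}. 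Adding the two bounds gives the stated estimate.
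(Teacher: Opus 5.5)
Your proposal is correct and matches the paper's proof: the triangle inequality through $\tilde U^\tau$, Remark~\ref{discrete_to_continuous} to pass to $\|U^\tau-U_h^\tau\|_{\ell^2(L_2(\Gamma))}$, and then Theorems~\ref{theorem:first_part_of_main} and~\ref{theorem:theorem_second_part_of_main}. Your explicit hypothesis checks are details the paper leaves implicit: the embeddings $L_\infty(0,T;X)\hookrightarrow L_2(0,T;X)$ and $\dot H^\alpha(\Gamma)\hookrightarrow\dot H^{\sfrac{\alpha}{2}}(\Gamma)$, and the Jensen bound $\|f^\tau\|_{\ell^2(X)}\le\|f\|_{L_2(0,T;X)}$.
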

\begin{proof}
    By triangle inequality and Remark \ref{discrete_to_continuous}, we have that
    \begin{align*}
        \|u-U_h^\tau\|_{L_2(0,T;L_2(\Gamma))} &\leq \|u-\tilde{U}^\tau\|_{L_2(0,T;L_2(\Gamma))} + \|\tilde{U}^\tau-U_h^\tau\|_{L_2(0,T;L_2(\Gamma))} \\
        &= \|u-\tilde{U}^\tau\|_{L_2(0,T;L_2(\Gamma))} + \|U^\tau-U_h^\tau\|_{\ell^2(L_2(\Gamma))}. 
    \end{align*}
    Theorem \ref{theorem:first_part_of_main} and Theorem \ref{theorem:theorem_second_part_of_main} yield the desired result.
\end{proof}
\section{Rational approximation}
\label{rationalapproximationsection}
We now consider a rational approximation of $(L_h/\kappa^2)^{-\beta}$ of the form 
\begin{align}
\label{rmdef}
    r_m((L_h/\kappa^2)^{-1})= p_\ell(L_h/\kappa^2)^{-1}p_r(L_h/\kappa^2),
\end{align}
where $p_r(\cdot)$ and $p_\ell(\cdot)$ are polynomials given by
\begin{align*}
\textstyle
    p_r(x) = c_m \prod_{i=1}^{m} (1-r_{1i}x)\quad \text{ and }\quad
    p_\ell(x) = b_{m+1}\prod_{j=1}^{m+1} (1-r_{2j}x), 
\end{align*}
and $\{r_{1i}\}_{i=1}^m$ and $\{r_{2j}\}_{j=1}^{m+1}$ are the roots of $q_1(x) =\sum_{i=0}^mc_ix^{i}$ and  $q_2(x)=\sum_{j=0}^{m+1}b_jx^{j}$, respectively. The coefficients  $\{c_i\}_{i=0}^m$ and  $\{b_j\}_{j=0}^{m+1}$ are determined as the best rational approximation $\sfrac{q_1}{q_2}$ of the function $x^{\beta-1}$ over the interval $J_h: = [\kappa^{2}\lambda_{N_h,h}^{-1}, \kappa^{2}\lambda_{1,h}^{-1}]$, where $\lambda_{1,h}, \lambda_{N_h,h}>0$ are the smallest and the largest eigenvalue of $L_h$, respectively. Interval $J_h$ contains the spectrum of $(L_h/\kappa^2)^{-1}$ and is a subset of $J:=[0,1]$, which in turn contains the spectrum of $(L/\kappa^2)^{-1}$.  We scale the operators $L$ and $L_h$ by dividing by $\kappa^2$ to avoid recomputing the rational approximation coefficients $\{c_i\}_{i=0}^m$ and $\{b_j\}_{j=0}^{m+1}$ for each mesh size $h$. This scaling ensures that the spectrum of $(L_h/\kappa^2)^{-1}$ lies within a fixed interval $J_* := [\delta, 1]$, where $\delta \in (0, 1)$ is chosen such that $J_h \subset J_*$ for all $h$. As a result, the rational approximation of $x^{\beta - 1}$ needs to be computed only once on $J_*$, regardless of the mesh size. For further details, see \citep[Section~3.5]{bolin2020rationalapprox}.

Replacing the rational approximation \eqref{rmdef} of $(L_h/\kappa^2)^{-\beta}$ in \eqref{weak_form_m2} yields 
\begin{align}
\label{weak_form_m21}
        (r_m((L_h/\kappa^2)^{-1})+\tau \kappa^{2\beta} I_{V_h})U_{h,m}^{k+1}=r_m((L_h/\kappa^2)^{-1})(U_{h,m}^{k}+\tau P_hf^{k+1}),
\end{align}
where the subindex $m$ indicates the dependence of the solution on the degree of the rational approximation. We are now in a position to analyze the impact of the rational approximation \eqref{rmdef} on the accuracy of the fully discrete scheme. The next result provides a complete error estimate for the scheme \eqref{weak_form_m21}, quantifying the combined effect of temporal discretization, spatial discretization, and rational approximation.
\begin{theorem} 
\label{lastlasttheorem}
Let $\alpha\in(0,2]$ and $u$ be the solution to \eqref{weakformofentireequation}. Assume further that for $k=0,\dots,N-1$, $U^{k+1}_{h,m}\in V_h$ solves the weak form of \eqref{weak_form_m21} with $U^{0}_{h,m}= U^{0}_{h}$. Suppose that $u_0\in\dot{H}^{\alpha}(\Gamma)$. Then, for every sufficiently small $\epsilon>0$,
\begin{align}
    \label{finalfinalest}
        \norm{u-U_{h,m}^\tau}_{L_2(0,T;L_2(\Gamma))}\lesssim\tau+h^{\alpha-\epsilon}K_\tau + \tau^{-2} h^{\alpha-2}\mathrm{e}^{-2\pi\sqrt{(1-\sfrac{\alpha}{2})m}},
    \end{align}
    where $K_\tau = |\ln(\tau)|^2$ if $f\in L_\infty(0,T;L_2(\Gamma))$ and $K_\tau = |\ln(\tau)|$ if $f\in L_\infty(0,T;\dot{H}^{\alpha}(\Gamma))$.
\end{theorem}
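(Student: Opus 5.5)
By the triangle inequality,
\[
\|u-U^\tau_{h,m}\|_{L_2(0,T;L_2(\Gamma))}\le \|u-U^\tau_h\|_{L_2(0,T;L_2(\Gamma))}+\|U^\tau_h-U^\tau_{h,m}\|_{\ell^2(L_2(\Gamma))},
\]
where Remark~\ref{discrete_to_continuous} lets us pass between the piecewise constant and discrete norms. Theorem~\ref{theorem:theorem_about_error_bounded_by_tau_plus_h_alpha} already bounds the first term by $\tau+h^{\alpha-\epsilon}K_\tau$. It therefore remains to show
\[
\|U^\tau_h-U^\tau_{h,m}\|_{\ell^2(L_2(\Gamma))}\lesssim \tau^{-2}h^{\alpha-2}\mathrm{e}^{-2\pi\sqrt{(1-\alpha/2)m}}.
\]

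\textbf{Step 1: rewrite both schemes with the same iteration structure.} Set $A_h=L_h^{-\beta}$ and $A_{h,m}=\kappa^{-2\beta}r_m((L_h/\kappa^2)^{-1})$. Dividing \eqref{weak_form_m21} by $\kappa^{2\beta}$, both schemes take the form $U^{k+1}=G(B)(U^k+\tau P_hf^{k+1})$ with $G(x)=x/(x+\tau)$, where $B=A_h$ or $B=A_{h,m}$. Both operators are functions of $L_h$, so they commute and share the eigenbasis $\{e_{j,h}\}$. Iterating gives $U^{k+1}_{h}-U^{k+1}_{h,m}=(G^{k+1}(A_h)-G^{k+1}(A_{h,m}))P_hu_0+\tau\sum_{i}(G^{k+1-i}(A_h)-G^{k+1-i}(A_{h,m}))P_hf^{i+1}$, and the spectral calculus reduces the error to scalar bounds over $\lambda\in\sigma(L_h)$.

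\textbf{Step 2: reduce to the rational approximation error.} For the scalar function $G$ we have $|G(x)-G(y)|=\tau|x-y|/((x+\tau)(y+\tau))\le \tau^{-1}|x-y|$ for $x,y\ge0$, and $|G|\le1$ provided $A_{h,m}\ge0$. Telescoping then gives $|G^n(x)-G^n(y)|\le n\tau^{-1}|x-y|$. Hence
\[
\|G^n(A_h)-G^n(A_{h,m})\|_{\mathcal L(V_h)}\le n\tau^{-1}\|A_h-A_{h,m}\|\le N\tau^{-1}\|A_h-A_{h,m}\|\lesssim \tau^{-2}\|A_h-A_{h,m}\|.
\]
Summing this bound over the initial-data term and the source term (via Young's inequality or a crude Cauchy--Schwarz, with $f\in L_\infty(0,T;L_2(\Gamma))$ and $\|P_hu_0\|\le\|u_0\|$) gives $\|U^\tau_h-U^\tau_{h,m}\|_{\ell^2}\lesssim\tau^{-2}\|A_h-A_{h,m}\|_{\mathcal L(V_h)}$, up to $T$-dependent constants. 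To get exactly $\tau^{-2}$ overall I would use $n\le N=T/\tau$ once, and absorb the source sum $\tau\sum_i\le T$.

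\textbf{Step 3: bound the rational error (main obstacle).} Since $x^{-\beta}=x\cdot x^{\beta-1}$ on $x\in J_h=\sigma((L_h/\kappa^2)^{-1})$,
\[
\|A_h-A_{h,m}\|=\kappa^{-2\beta}\max_{x\in J_h}|x^{\beta}-x\,q_1(x)/q_2(x)|\le \kappa^{-2\beta}\max_{x\in J_h}|x^{\beta-1}-q_1/q_2|.
\]
This is the best uniform rational approximation error of $x^{\beta-1}$ on $J_h$. The standard Stahl-type estimate from \citep{bolin2020rationalapprox}, relative to the scale of the interval, gives an error of order $\delta_h^{\beta-1}\mathrm{e}^{-2\pi\sqrt{(1-\beta)m}}$ with $\delta_h=\kappa^2/\lambda_{N_h,h}$. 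By Lemma~\ref{lemma:boundwithhminus2}, $\lambda_{N_h,h}\lesssim h^{-2}$, so $\delta_h^{\beta-1}\lesssim h^{2\beta-2}=h^{\alpha-2}$. Combining yields $\tau^{-2}h^{\alpha-2}\mathrm{e}^{-2\pi\sqrt{(1-\alpha/2)m}}$.

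The delicate points are these. First, one must quote the rational approximation bound on $[\delta_h,1]$ with the correct $h$-dependent prefactor, since the error of $x^{\beta-1}$ blows up near $0$. Second, one needs the positivity $r_m\ge0$ on $J_h$, which holds for $m$ large because the approximation is close to the positive function $x^{\beta-1}$; this ensures $|G(A_{h,m})|\le1$. Should the positivity fail for small $m$, I would restrict the statement to sufficiently large $m$, which is harmless for the asymptotic estimate.
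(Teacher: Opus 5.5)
Your proposal is correct and follows essentially the same route as the paper. Both use the triangle inequality with Theorem~\ref{theorem:theorem_about_error_bounded_by_tau_plus_h_alpha}, reduce $U_h^\tau-U_{h,m}^\tau$ spectrally to the scalar bound $|g(x)-R_m(x)|\le|x^\beta-r_m(x)|/(\tau\kappa^{2\beta})$ together with $|A^k-B^k|\le k|A-B|$ and $k\le N=T/\tau$, and finish with the rational-approximation bound $\sup_{J_h}|x^\beta-r_m(x)|\lesssim h^{\alpha-2}\mathrm{e}^{-2\pi\sqrt{(1-\alpha/2)m}}$ from \citep{bolin2020rationalapprox}. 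The paper also needs the positivity of $r_m$ on $J_h$ that you flag; it simply asserts $0\le R_m\le 1$ there without proof, so your caveat is a fair observation rather than a gap in your argument.
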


\begin{proof}[Proof of Theorem~\ref{lastlasttheorem}]
Let $\beta = \sfrac{\alpha}{2}$. By triangle inequality, 
    \begin{align*}
        \norm{u-U_{h,m}^\tau}_{L_2(0,T;L_2(\Gamma))} \leq \norm{u-U_h^\tau}_{L_2(0,T;L_2(\Gamma))} + \norm{U_h^\tau - U_{h,m}^\tau}_{L_2(0,T;L_2(\Gamma))}.
    \end{align*}
    Theorem \ref{theorem:theorem_about_error_bounded_by_tau_plus_h_alpha} provides a bound for the first term in the right hand side. Let $Z_h = \kappa^2L_h^{-1}$, $g(x) = x^{\beta}/(x^{\beta}+\tau\kappa^{2\beta})$, and $R_m(x) = r_m(x)/(r_m(x)+\tau\kappa^{2\beta} )$. By isolating $U_h^{k+1}$ in \eqref{weak_form_m2} and $U_{h,m}^{k+1}$ in \eqref{weak_form_m21}, and then iterating the resulting relations, we obtain
\begin{align*}
    U_h^{k+1}&=g^{k+1}(Z_h)U_h^{0}+\tau\textstyle\sum_{i=0}^kg^{k+1-i}(Z_h)P_hf^{i+1},\\
    U_{h,m}^{k+1}&= R^{k+1}_m(Z_h)U_{h}^{0}+\tau\textstyle\sum_{i=0}^kR_m^{k+1-i}(Z_h)P_hf^{i+1},
\end{align*}
respectively. Inserting these into $\|U_h^\tau-U_{h,m}^\tau\|^2_{\ell^2(L_2(\Gamma))} = \tau\sum_{k=0}^{N-1}\|U_h^{k+1}-U_{h,m}^{k+1}\|^2_{L_2(\Gamma)}$, and splitting the resulting expression, yields
\begin{align*}
        \norm{U_h^\tau-U_{h,m}^\tau}^2_{\ell^2(L_2(\Gamma))} &\leq 2\tau\textstyle\sum_{k=0}^{N-1}\norm{(g^{k+1}(Z_h)-R^{k+1}_m(Z_h))U_{h}^{0}}^2_{L_2(\Gamma)}\\
        &+2\tau\textstyle\sum_{k=0}^{N-1}\norm{\tau\sum_{i=0}^k(g^{k+1-i}(Z_h)-R_m^{k+1-i}(Z_h))P_hf^{i+1}}^2_{L_2(\Gamma)}\\
        &=:E^2_1+E^2_2.
\end{align*}
\paragraph{Estimate for $E^2_1$} By expanding $U_h^0$ in the eigenbasis $\{e_{j,h}\}_{j=1}^{N_h}$, we arrive at
\begin{align*}
        E^2_1&=2 \tau\textstyle\sum_{k=0}^{N-1}\sum_{j=1}^{N_h}\left((g^{k+1}(Z_h)-R^{k+1}_m(Z_h))U_{h}^{0}, e_{j,h}\right)^2_{L_2(\Gamma)}\\
        &= 2\tau\textstyle\sum_{k=0}^{N-1}\sum_{j=1}^{N_h}(g^{k+1}(\mu_j)-R_m^{k+1}(\mu_j))^2\left(P_hu_0, e_{j,h}\right)^2_{L_2(\Gamma)},
\end{align*}
where $\mu_j = \kappa^2\lambda_{j,h}^{-1}$. Since $0\leq g(x), R_m(x)\leq1$ for $x\in J_h$, we can use the inequality $|A^k-B^k|\leq k|A-B|$ with $A=g(x)$ and $B= R_m(x)$ to obtain 
\begin{align}
    E^2_1&\leq 2\tau\textstyle\sum_{k=0}^{N-1}(k+1)^2\sum_{j=1}^{N_h}(g(\mu_j)-R_m(\mu_j))^2\left(P_hu_0, e_{j,h}\right)^2_{L_2(\Gamma)}\notag\\
    &\leq 2\tau \norm{P_hu_0}^2_{L_2(\Gamma)}\textstyle\left(\sup_{x\in J_h}|g(x)-R_m(x)|\right)^2\sum_{k=0}^{N-1}(k+1)^2\notag\\
    &\leq 2\tau N^3 \norm{u_0}^2_{L_2(\Gamma)}\textstyle\left(\sup_{x\in J_h}|g(x)-R_m(x)|\right)^2. \label{vv2}
\end{align}
Since $|(x^\beta+\tau\kappa^{2\beta})(r_m(x)+\tau\kappa^{2\beta})|\geq \tau^2\kappa^{4\beta}$ for $x\in J_h$, we have that 
\begin{align}
\label{eq:bound_for_g_munis_Rm}
    |g(x)-R_m(x)| = \dfrac{\tau\kappa^{2\beta}|x^\beta-r_m(x)|}{|(x^\beta+\tau\kappa^{2\beta})(r_m(x)+\tau\kappa^{2\beta})|}\leq\dfrac{|x^\beta-r_m(x)|}{\tau\kappa^{2\beta}}.
\end{align}
Using the above, \eqref{vv2} becomes $E^2_1 \leq 2T^3 \norm{u_0}^2_{L_2(\Gamma)}\kappa^{-4\beta}\tau^{-4}\textstyle\left(\sup_{x\in J_h}|x^\beta-r_m(x)|\right)^2$. From the proof of \citep[Theorem~3.1]{bolin2020rationalapprox}, we have that
\begin{align}
\label{boundonsupforrational}
   \textstyle\left(\sup_{x\in J_h}|x^\beta-r_m(x)|\right)^2\lesssim  h^{-4(1-\beta)}\mathrm{e}^{-4\pi\sqrt{(1-\beta)m}}.
\end{align}
This and the previous estimate imply that 
\begin{align}
\label{ee1}
        E_1 &\lesssim \kappa^{-2\beta}\norm{u_0}_{L_2(\Gamma)}  \tau^{-2}h^{-2(1-\beta)}\mathrm{e}^{-2\pi\sqrt{(1-\beta)m}}.
\end{align}
\paragraph{Estimate for $E_2^2$} Expanding $P_hf^{i+1}$ in the eigenbasis $\{e_{j,h}\}_{j=1}^{N_h}$ yields
\begin{align*}
    E_2^2 &= 2\tau^3\textstyle\sum_{k=0}^{N-1}\sum_{j=1}^{N_h}\left(\sum_{i=0}^k(g^{k+1-i}(Z_h)-R_m^{k+1-i}(Z_h))P_hf^{i+1}, e_{j,h}\right)^2_{L_2(\Gamma)}\\
    &= 2\tau^3\textstyle\sum_{k=0}^{N-1}\sum_{j=1}^{N_h}\left(\sum_{i=0}^k (g^{k+1-i}(\mu_j)-R_m^{k+1-i}(\mu_j)) (P_hf^{i+1}, e_{j,h})_{L_2(\Gamma)}\right)^2.
\end{align*}
By Cauchy–Schwarz inequality, we now obtain that 
\begin{align*}
\textstyle
     E_2^2  \leq 2\tau^3\sum_{k=0}^{N-1}\sum_{j=1}^{N_h}\left(\sum_{i=0}^k(g^{k+1-i}(\mu_j)-R_m^{k+1-i}(\mu_j))^2\right)\left(\sum_{i=0}^k(f^{i+1}, e_{j,h})^2_{L_2(\Gamma)}\right).
\end{align*}
Combining \eqref{eq:bound_for_g_munis_Rm} with the estimate $\sum_{i=0}^k (k+1-i)^2 \le (k+1)^3 \le N^3$, the first inner sum can be bounded as follows.
\begin{align*}
\textstyle
\sum_{i=0}^k(g^{k+1-i}(\mu_j)-R_m^{k+1-i}(\mu_j))^2&\leq (g(\mu_j)-R_m(\mu_j))^2 \textstyle\sum_{i=0}^k(k+1-i)^2 \\
&\leq N^3\tau^{-2}\kappa^{-4\beta}\textstyle\left(\sup_{x\in J_h}|x^\beta-r_m(x)|\right)^2
\end{align*}
Therefore, the last expression for $E_2^2$ becomes
\begin{align*}
     E_2^2  &\leq 2\tau N^3\kappa^{-4\beta}\textstyle\left(\sup_{x\in J_h}|x^\beta-r_m(x)|\right)^2\sum_{k=0}^{N-1}\sum_{j=1}^{N_h}\sum_{i=0}^k(f^{i+1}, e_{j,h})^2_{L_2(\Gamma)}\\
     &= 2\tau N^3\kappa^{-4\beta}\textstyle\left(\sup_{x\in J_h}|x^\beta-r_m(x)|\right)^2\sum_{k=0}^{N-1}\sum_{i=0}^k\norm{f^{i+1}}^2_{L_2(\Gamma)}.
\end{align*}
The double summation in the above expression can be estimated as follows.
\begin{align*}
\textstyle
\sum_{k=0}^{N-1}\sum_{i=0}^k \norm{f^{i+1}}^2_{L_2(\Gamma)}& = \textstyle\sum_{i=0}^{N-1}(N-i) \norm{f^{i+1}}^2_{L_2(\Gamma)}\leq N \sum_{i=0}^{N-1} \norm{f^{i+1}}^2_{L_2(\Gamma)}\\
&= N\tau^{-1}\norm{f^\tau}^2_{\ell^2(L_2(\Gamma))}\lesssim N\tau^{-1}\norm{f}^2_{L_\infty(0,T;L_2(\Gamma))}.
\end{align*}
Using \eqref{boundonsupforrational} and replacing the above estimate into the last expression for $E^2_2$ gives
\begin{align*}
     E_2^2 & \leq 2\tau N^3\kappa^{-4\beta}\textstyle\left(\sup_{x\in J_h}|x^\beta-r_m(x)|\right)^2 N\tau^{-1}\norm{f}^2_{L_\infty(0,T;L_2(\Gamma))}\\
     &\lesssim 2T^4\kappa^{-4\beta}\norm{f}^2_{L_\infty(0,T;L_2(\Gamma))}\tau^{-4}h^{-4(1-\beta)}\mathrm{e}^{-4\pi\sqrt{(1-\beta)m}},
\end{align*}
which implies that
\begin{align}
\label{ee2}
     E_2\lesssim \kappa^{-2\beta}\norm{f}_{L_\infty(0,T;L_2(\Gamma))}\tau^{-2}h^{-2(1-\beta)}\mathrm{e}^{-2\pi\sqrt{(1-\beta)m}}.
\end{align}
By combining estimates \eqref{ee1} and \eqref{ee2}, we obtain that 
\begin{equation*}
    \norm{U_h^\tau-U_{h,m}^\tau}_{\ell^2(L_2(\Gamma))} \lesssim \kappa^{-2\beta}(\norm{u_0}_{L_2(\Gamma)} +\norm{f}_{L_\infty(0,T;L_2(\Gamma))}) \tau^{-2}h^{-2(1-\beta)}\mathrm{e}^{-2\pi\sqrt{(1-\beta)m}}.
\end{equation*}
By Theorem \ref{theorem:theorem_about_error_bounded_by_tau_plus_h_alpha} and the above estimate, we obtain \eqref{finalfinalest}.
\end{proof}

Finally, to derive the matrix representation of the numerical scheme, which is what we use to implement it, we first use \eqref{rmdef} to rewrite \eqref{weak_form_m21} as
\begin{align}
\label{weak_form_m3}
        (p_r(L_h/\kappa^2)+\tau \kappa^{2\beta}p_\ell(L_h/\kappa^2))U_{h,m}^{k+1}= p_r(L_h/\kappa^2) (U_{h,m}^{k}+\tau f^{k+1}).
\end{align}
We can now go one step further and isolate the numerical solution at time $t_{k+1}$ by considering the following partial fraction decomposition 
\begin{align}
\label{eq:partial_fraction}
    p_r(x)(p_r(x)+\tau \kappa^{2\beta}p_\ell(x))^{-1} = \textstyle \sum_{n=1}^{m+1}a_n(x-z_n)^{-1} + r,
\end{align}
where $\{a_n\}_{n=1}^{m+1}$ and $\{z_n\}_{n=1}^{m+1}$ are the residues and poles, respectively, and $r$ denotes the remainder. Substituting this into \eqref{weak_form_m3} gives the iteration rule
\begin{align}
\label{weak_form_m4}
\textstyle
        U_{h,m}^{k+1}= \left(\sum_{n=1}^{m+1}a_n(L_h/\kappa^2-z_nI_{V_h})^{-1} + rI_{V_h}\right) (U_{h,m}^{k}+\tau f^{k+1}).
\end{align}
At each time step $t_k$, the finite element solution $U_{h,m}^k\in V_h$ to the weak form of \eqref{weak_form_m21} can be expressed as a linear combination of the basis functions  $\{\psi^i_h\}_{i=1}^{N_h}$ introduced in Section~\ref{sec:finite_element_approximation}, namely, $U_{h,m}^k(s) =  \sum_{i=1}^{N_h}u_i^k\psi^i_h(s)$, $s\in\Gamma$. Substituting this expansion into the weak form of \eqref{weak_form_m4} yields the following matrix iteration
\begin{align}
\label{eq:final_scheme2}
\textstyle
\mathbf{U}_{k+1} = \mathbf{C}^{-1}\left(\sum_{n=1}^{m+1} a_n\left(\mathbf{L}\mathbf{C}^{-1}/\kappa^2-z_n\mathbf{I}_{N_h}\right)^{-1} + r\mathbf{I}_{N_h}\right) (\mathbf{C}\mathbf{U}_k+\tau \mathbf{F}_{k+1}),
\end{align}
where $\mathbf{L} = \kappa^2\mathbf{C}+\mathbf{G}$, the matrix $\mathbf{C}\in\mathbb{R}^{N_h\times N_h}$ has entries $\mathbf{C}_{i,j} = (\psi^i_h,\psi^j_h)_{L_2(\Gamma)}$, $\mathbf{G}\in\mathbb{R}^{N_h\times N_h}$ has entries $\mathbf{G}_{i,j} = ({\psi^{i}_h}',{\psi^{j}_h}')_{L_2(\Gamma)}$, $\mathbf{U}_k\in\mathbb{R}^{N_h}$ has entries $u_i^k$, and $\mathbf{F}_k\in\mathbb{R}^{N_h}$ has components $(f^{k},\psi^i_h)_{L_2(\Gamma)}$. In practice, since the rational function in \eqref{eq:partial_fraction} is proper, there is no remainder $r$. Moreover, since $( \mathbf{L}\mathbf{C}^{-1}/\kappa^2-z_n\mathbf{I}_{N_h})^{-1}  = \mathbf{C}(\mathbf{L}/\kappa^2-z_n\mathbf{C})^{-1}$, scheme \eqref{eq:final_scheme2} reduces to
\begin{align}
\label{thenumericalscheme}
\textstyle
\mathbf{U}_{k+1} = \left(\sum_{n=1}^{m+1} a_n\left(\mathbf{L}/\kappa^2-z_n\mathbf{C}\right)^{-1}\right) (\mathbf{C}\mathbf{U}_k+\tau \mathbf{F}_{k+1}),
\end{align}
where it is evident that only sparse solves are required.
\section{Numerical experiments} 
This section presents numerical experiments to validate the theoretical error estimates established in the previous sections.
\label{numericalimplementationsec}
\subsection{Eigenfunction-based construction of an exact solution}
To derive an exact solution to problem \eqref{eq:maineq} for a numerical experiment, we consider the tadpole graph depicted in Figure~\ref{basis}. This graph is selected because the eigenvalues and eigenfunctions of its associated shifted Kirchhoff--Laplacian are explicitly known (see \citep[Example~1]{bolin2024gaussian} for details), which enables the construction of an exact solution via the representation~\eqref{eq:sol_reprentation}. To achieve this, we expand both the initial condition and the right-hand side in terms of the eigenfunctions. For the initial condition $u_0$, we select coefficients $\{x_j\}_{j=1}^{\infty}$ and set $u_0(s) = \sum_{j=1}^{\infty}x_je_j(s)$. Similarly, for the right-hand side function $f$, we choose a scalar function $g(t)$ and coefficients $\{y_j\}_{j=1}^{\infty}$ and let $f(s,t) = g(t)\sum_{j=1}^{\infty} y_j e_j(s)$. This approach has the additional advantage that the integrals $\left(u_0, e_j\right)_{L_2(\Gamma)}$ and $\left(f(r), e_j\right)_{L_2(\Gamma)}$ appearing in~\eqref{eq:sol_reprentation} do not require numerical approximation. Substituting these expressions into the solution formula~\eqref{eq:sol_reprentation} yield
\begin{align}
\label{sollll}
    u(s,t) = \textstyle\sum_{j=1}^{\infty}(x_j+y_j G_j(t))\mathrm{e}^{-\lambda^{\sfrac{\alpha}{2}}_jt}e_j(s),\quad G_j(t)= \int_0^t \mathrm{e}^{\lambda^{\sfrac{\alpha}{2}}_jr}g(r)\dd r.
\end{align}
For suitable choices of $g(r)$, the integral defining $G_j(t)$ in \eqref{sollll} can be evaluated in closed form. Although the series in~\eqref{sollll} is written as an infinite sum, in our construction only finitely many coefficients $\{x_j\}_{j=1}^{N_0}$ and $\{y_j\}_{j=1}^{N_f}$ are nonzero. Consequently, the solution is given exactly by a finite sum, with no need for series truncation or numerical integration. 
\subsection{Numerical assembly and approximation of the error}
\label{errorcompsec}
For the numerical implementation, however, the integrals $(f^{k},\psi^i_h)_{L_2(\Gamma)}$, which determine the components of the vector $\mathbf{F}_k$ at each time step $t_k$, must be approximated numerically, as exact evaluation of these integrals is generally not feasible. Let $\mathbf{F}\in\mathbb{R}^{N_h\times (N+1)}$ denote the matrix whose columns are $\mathbf{F}_k$. This matrix has entries $\mathbf{F}_{i,k}$ given by
\begin{align*}
   \textstyle (f^{k},\psi^i_h)_{L_2(\Gamma)} = (f(\cdot,t_k),\psi^i_h)_{L_2(\Gamma)} = \sum_{j=1}^{N_f} y_j g(t_k) (e_j,\psi^i_h)_{L_2(\Gamma)}.
\end{align*}
To approximate the inner products $(e_j,\psi^i_h)_{L_2(\Gamma)}$, we use the quadrature formula $\boldsymbol{\psi}_i^\top\mathbf{C}^{\star}\mathbf{e}_j$, where $\mathbf{e}_j=[e_j(s_1)\dots e_j(s_{N_{h_{\star}}})]^\top\in\mathbb{R}^{N_{h_\star}}$ and $\boldsymbol{\psi}_i=[\psi^i_h(s_1)\dots \psi^i_h(s_{N_{h^\star}})]^\top\in\mathbb{R}^{N_{h_\star}}$  are vectors of function evaluations on a fine spatial mesh with mesh size $h_{\star}$ and nodes $\{s_\ell\}_{\ell=1}^{N_{h_{\star}}}$. Matrix $\mathbf{C}^{\star}\in\mathbb{R}^{N_{h_\star}\times N_{h_\star}}$ contains the corresponding quadrature weights, with entries $\mathbf{C}^{\star}_{i,j} = (\psi_{h_\star}^i,\psi_{h_\star}^j)_{L_2(\Gamma)}$ for $i,j = 1,\dots, N_{h_\star}$. We emphasize that two spatial meshes are involved in this construction. The basis functions $\{\psi^i_h\}_{i=1}^{N_h}$ are defined on a coarse spatial mesh with mesh size $h$, while the quadrature is carried out over a finer mesh with associated basis functions $\{\psi^\ell_{h_\star}\}_{\ell=1}^{N_{h_\star}}$ and mesh size $h_\star$.
If $\mathbf{N}\in\mathbb{R}^{N_{f}\times (N+1)}$ denotes the matrix with entries $\mathbf{N}_{j,k} = y_j g(t_k)$, then $\mathbf{F}$ can be approximated as $[\boldsymbol{\psi}_1\dots \boldsymbol{\psi}_{N_h}]^\top \mathbf{C}^{\star} [\mathbf{e}_1\dots \mathbf{e}_{N_f}] \mathbf{N}$.

Let $E$ denote a quadrature approximation of the $L_2(0,T;L_2(\Gamma))$-error on the left-hand side of \eqref{finalfinalest}. That is, $E^2 = \mathbf{w}^\top_{\star}(\boldsymbol{\mathfrak{U}}- \mathbf{\Psi}p(\mathbf{U}))^2\boldsymbol{\tau}_{\star}$, where $\mathbf{X}^2$ means entry-wise square. Here, $\boldsymbol{\mathfrak{U}}\in\mathbb{R}^{N_{h_\star}\times (N_\star+1)}$ is a matrix containing evaluations of the exact solution, with entries $\boldsymbol{\mathfrak{U}}_{i,k} = u(s_i,t_k)$ where $s_i$ is the $i$th location in a fine spatial mesh with mesh size $h_{\star}$, and $t_k$ is the $k$th time step in a fine temporal mesh with step size $\tau_{\star} = T/N_\star$. The matrix $\mathbf{U}\in\mathbb{R}^{N_{h}\times (N+1)}$ has the numerical solution, with columns $\mathbf{U}_k$ computed by \eqref{thenumericalscheme}. This solution is defined on possibly coarser spatial and temporal meshes (with mesh sizes $h$ and $\tau=T/N$, respectively) and is appropriately projected onto the fine temporal mesh as a piecewise constant function $p(\mathbf{U})\in\mathbb{R}^{N_{h}\times (N_\star+1)}$.  The matrix $\mathbf{\Psi}\in\mathbb{R}^{N_{h_\star}\times N_h}$, with entries $\mathbf{\Psi}_{i,j}=\psi^j_h(s_i)$ for $j=1\dots, N_h$ and $i = 1,\dots N_{h_{\star}}$, projects the numerical solution onto the fine spatial mesh. If the fine and coarse meshes coincide (i.e., $N_h = N_{h_{\star}}$), then $\mathbf{\Psi} = \mathbf{I}_{N_{h_{\star}}}$. The vector $\mathbf{w}_{\star}\in\mathbb{R}^{N_{h_\star}}$ contains weights $w_i^\star = (\psi^i_{h_{\star}},1)_{L_2(\Gamma)}$ and $\boldsymbol{\tau}_{\star}\in\mathbb{R}^{N_\star+1}$ contains uniform weights equal to $\tau_{\star}$.

\subsection{Convergence behavior}
The total error bound \eqref{finalfinalest} in Theorem \ref{lastlasttheorem} has three components: the temporal error $\mathcal{O}(\tau)$, the spatial error $\mathcal{O}(h^{\alpha-\epsilon}|\ln(\tau)|)$ (since the chosen $f$ has $ L_\infty(0,T;\dot{H}^{\alpha}(\Gamma))$ regularity), and the rational approximation error $\mathcal{O}(\tau^{-2}h^{\alpha-2}\text{e}^{-2\pi\sqrt{(1-\sfrac{\alpha}{2})m}})$. We next outline a systematic approach to verify the convergence behavior with respect to each of the parameters $h$, $\tau$, and $m$, by appropriately balancing the remaining error components. 

\paragraph{Convergence in $h$}
\label{convh}
To observe convergence in $h$, we determine $\tau$ numerically from the relation $\tau\propto h^{\alpha-\epsilon}|\ln(\tau)|$ and choose $m\propto\left\lceil\ln^2\!\bigl(h^{2-\epsilon}|\ln(\tau)|\tau^2\bigr)\big/\!\left(4\pi^2\left(1-\sfrac{\alpha}{2}\right)\right)\right\rceil$ so that the temporal and rational approximation errors are of the same order as the spatial error. This yields $E_h=E/|\ln(\tau)|\lesssim h^{\alpha-\epsilon}$, corresponding to a convergence rate of order $\alpha-\epsilon$ with respect to the mesh size $h$. Fitting the regression $\log_{10}E_h=s_h\log_{10}h+\log_{10}c_h$, we therefore expect $s_h\approx\alpha-\epsilon$.

\paragraph{Convergence in $\tau$}
\label{convtau}
To observe convergence in $\tau$, we choose $h \propto (\tau/|\ln(\tau)|)^{\sfrac{1}{(\alpha-\epsilon)}}$ and $m\propto\left\lceil\ln^2\!\bigl(h^{2-\epsilon}|\ln(\tau)|\tau^{2}\bigr)\big/\!\left(4\pi^2\left(1-\sfrac{\alpha}{2}\right)\right)\right\rceil$. This yields $E_\tau = E\lesssim\tau$, corresponding to a convergence rate of order $1$ with respect to the time step $\tau$. Fitting the regression $\log_{10}E_\tau=s_\tau\log_{10}\tau+\log_{10}c_\tau$, we therefore expect $s_\tau\approx1$.

\paragraph{Convergence in $m$} 
\label{convm}
To observe convergence in $m$, we determine $h$ numerically from the relation $|\ln((C_mh^{\alpha-2})^{\sfrac{1}{3}})| \propto C_m^{\sfrac{1}{3}}h^{\epsilon-\sfrac{(2\alpha+2)}{3}}$ and choose $\tau \propto (C_mh^{\alpha-2})^{\sfrac{1}{3}}$, where $C_m = \mathrm{e}^{-2\pi\sqrt{(1-\sfrac{\alpha}{2})m}}$. This yields $E_m = E/(\tau^{-2}h^{\alpha-2})\lesssim C_m$, corresponding to exponential decay with respect to the rational approximation order $m$. Fitting the regression $\ln E_m = s_m\sqrt{m} +\ln c_m$, we therefore expect $s_m\approx -2\pi\sqrt{1-\sfrac{\alpha}{2}}$.

\subsection{Results}

To verify the convergence behavior with respect to $h$, $\tau$, and $m$, we fix $T = 2$, $\epsilon =  10^{-10}$, and $\kappa = 4$. The initial condition $u_0$ is defined by choosing the coefficient $x_5=10$ and setting all other $x_j$ to zero, which gives $u_0(s) = 10e_5(s)$. Similarly, the right-hand side function $f$ is constructed by choosing $y_7=10$ and all other $y_j=0$, and by setting $g(t)=\sin(\pi t)$. This yields $f(s,t) = 10\sin(\pi t)e_7(s)$, and the corresponding exact solution takes the form $u(s,t) = x_5\mathrm{e}^{-\lambda^{\sfrac{\alpha}{2}}_5t}e_5(s)+y_7 G_7(t)\mathrm{e}^{-\lambda^{\sfrac{\alpha}{2}}_7t}e_7(s)$, where $G_7(t)= \int_0^t \mathrm{e}^{\lambda^{\sfrac{\alpha}{2}}_7r}\sin(\pi r)\dd r$. We evaluate the exact solution on a fine space-time mesh with spatial and temporal resolutions given by $h_{\star} = 9\cdot10^{-4}$ and $\tau_{\star} = 5\cdot10^{-6}$, respectively. The same fine spatial mesh is also used to approximate the inner products $(e_7, \psi^i_h)_{L_2(\Gamma)}$. To study convergence with respect to $h$, we consider six logarithmically spaced mesh sizes ranging from $9\cdot10^{-4}$ to $10^{-2}$, inclusive, balancing $\tau$ and $m$ accordingly. For each value of $h$, the corresponding error $E_h$ is computed as described in Section~\ref{errorcompsec}. A linear regression is then performed on the pairs $(\log_{10} h, \log_{10} E_h)$ to estimate the convergence rate $\alpha-\epsilon$, following the procedure outlined in Section~\ref{convh}. We repeat this routine for $\alpha = 1.0,1.2,1.4,1.6,1.8$. The results are shown in the left panel of Figure~\ref{fig:combined_cov_rates}. Similarly, to investigate convergence with respect to $\tau$, we consider six logarithmically spaced time steps ranging from $9\cdot10^{-4}$ to $10^{-2}$, inclusive, balancing $h$ and $m$ accordingly. For each value of $\tau$, the corresponding error $E_\tau$ is computed as described in Section~\ref{errorcompsec}. A linear regression is then performed on the pairs $(\log_{10} \tau, \log_{10} E_\tau)$ to estimate the convergence rate $1$, following the procedure outlined in Section~\ref{convtau}. The results are displayed in the center panel of Figure~\ref{fig:combined_cov_rates}. Finally, to examine convergence with respect to $m$, we vary $m$ over the values $1, 2, 3, 4$, balancing $h$ and $\tau$ accordingly. For each value of $m$, the corresponding error $E_m$ is computed as described in Section~\ref{errorcompsec}. A linear regression is then performed on the pairs $(\sqrt{m}, \ln E_m)$ to estimate the exponential decay $-2\pi\sqrt{1-\sfrac{\alpha}{2}}$, following the approach outlined in Section~\ref{convm}. The results are shown in the right panel of Figure~\ref{fig:combined_cov_rates}. Together, the convergence trends observed across all panels of Figure~\ref{fig:combined_cov_rates} consistently align with the theoretical predictions, as reflected by the relative percentage errors reported in the legends, the largest of which is only 2.86\%. This confirms the accuracy and reliability of the numerical scheme and the underlying error analysis.

\begin{figure}[t]
\centering
\includegraphics[width=0.99\textwidth]{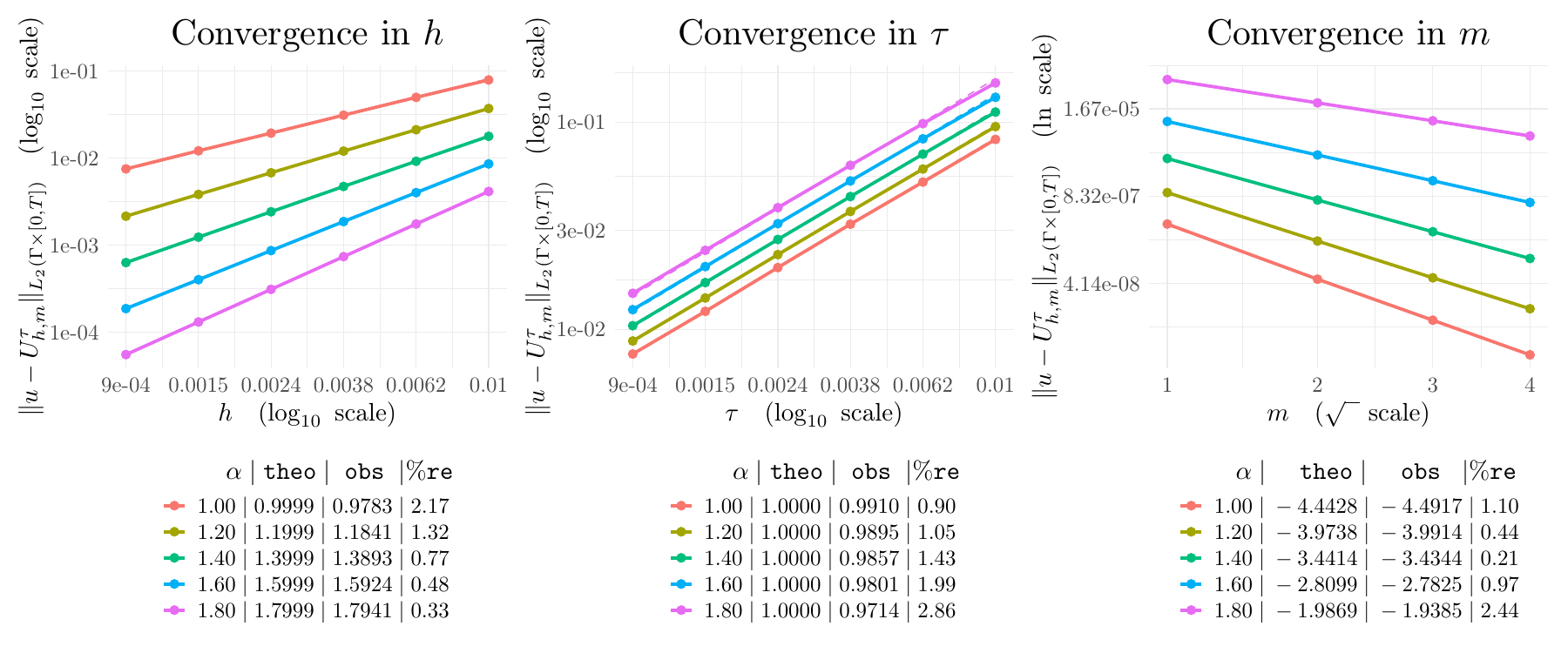}
\caption{Comparison between the theoretical and observed convergence behavior of the $\|u-U_{h,m}^\tau\|_{L_2(0,T;L_2(\Gamma))}$-error with respect to $h$, $\tau$, and $m$. The left and center panels display the convergence with respect to $h$ and $\tau$, respectively, on a $\mathrm{log}_{10}$-$\mathrm{log}_{10}$ scale, while the right plot shows the exponential decay in $m$ on a semi-$\ln$ scale, with $m$ plotted on a square-root scale. Dashed lines indicate the theoretical rates, and solid lines represent the observed error curves. 
The legends report the value of $\alpha$, the theoretical $(\mathtt{theo})$ and observed $(\mathtt{obs})$ values, together with the relative percentage error $(\mathtt{\%re})=100|\mathtt{theo}-\mathtt{obs}|/|\mathtt{theo}|$. For the $h$-, $\tau$-, and $m$-studies, $(\mathtt{theo},\mathtt{obs})$ corresponds to $(\alpha-\epsilon,s_h)$, $(1,s_\tau)$, and $(-2\pi\sqrt{1-\sfrac{\alpha}{2}},s_m)$, respectively.}
\label{fig:combined_cov_rates}
\end{figure}

\section{Conclusions and Future Work}
\label{conclusion}
We have presented a comprehensive numerical framework for the approximation of fractional diffusion equations on compact metric graphs. By combining recent analytical developments for fractional operators on metric graphs with efficient numerical techniques, it establishes a foundation for the study and computation of nonlocal diffusion processes on network domains.

There are several natural extensions of this work for future investigation. First, the framework can be generalized to incorporate operators $L =\kappa^2 -\nabla\cdot(\mathbf{H}\nabla)$, where $\mathbf{H}$ is a symmetric and positive definite matrix. Second, our approach can be extended to fractional time derivatives, replacing the classical time derivative with a Caputo or Riemann–Liouville derivative. Inclusion of memory effects would improve modeling capabilities for anomalous diffusion processes \citep{Evangelista2018Fractional, Metzler2000Therandomwalksguide}. Third, building on  \citep{Glusa2021errorestimates}, a natural extension is the study of optimal control problems governed by space-time fractional models on metric graphs. Fourth, although we focused on metric graphs, the proposed spectral-rational framework relies only on the functional calculus of self-adjoint operators with compact resolvent. Consequently, it is expected to extend naturally to fractional diffusion problems on bounded Euclidean domains and compact Riemannian manifolds using standard finite element discretizations. Finally, replacing the right-hand side with a stochastic forcing term, particularly space-time white noise, would facilitate modeling stochastic fractional diffusion processes on graphs.

\bibliographystyle{chicago}
\bibliography{references}

@Inbook{Henry1981,
author="Henry, Daniel",
title="Preliminaries",
bookTitle="Geometric Theory of Semilinear Parabolic Equations",
year="1981",
publisher="Springer Berlin Heidelberg",
address="Berlin, Heidelberg",
pages="3--40",
isbn="978-3-540-38528-8",
      NOTE = "\url{https://doi.org/10.1007/BFb0089649}",
}

@article {bolin2020numerical,
    AUTHOR = {Bolin, David and Kirchner, Kristin and Kov\'acs, Mih\'aly},
     TITLE = {Numerical solution of fractional elliptic stochastic {PDE}s
              with spatial white noise},
   JOURNAL = {IMA J. Numer. Anal.},
  FJOURNAL = {IMA Journal of Numerical Analysis},
    VOLUME = {40},
      YEAR = {2020},
    NUMBER = {2},
     PAGES = {1051--1073},
      ISSN = {0272-4979,1464-3642},
   MRCLASS = {65N75 (35J25 35R11 35R60 60H15 60H35 65N30)},
  MRNUMBER = {4092278},
      NOTE = {\url{https://doi.org/10.1093/imanum/dry091}},
}

@article {ChandlerWilde2015interpolation,
    AUTHOR = {Chandler-Wilde, S. N. and Hewett, D. P. and Moiola, A.},
     TITLE = {Interpolation of {H}ilbert and {S}obolev spaces: quantitative
              estimates and counterexamples},
   JOURNAL = {Mathematika},
  FJOURNAL = {Mathematika. A Journal of Pure and Applied Mathematics},
    VOLUME = {61},
      YEAR = {2015},
    NUMBER = {2},
     PAGES = {414--443},
      ISSN = {0025-5793,2041-7942},
   MRCLASS = {46B70 (46E35)},
  MRNUMBER = {3343061},
MRREVIEWER = {Oscar\ Dom\'inguez},
      NOTE = {\url{https://doi.org/10.1112/S0025579314000278}},
}

@article {bolin2023regularity,
    AUTHOR = {Bolin, David and Kov\'acs, Mih\'aly and Kumar, Vivek and
              Simas, Alexandre B.},
     TITLE = {Regularity and numerical approximation of fractional elliptic
              differential equations on compact metric graphs},
   JOURNAL = {Math. Comp.},
  FJOURNAL = {Mathematics of Computation},
    VOLUME = {93},
      YEAR = {2024},
    NUMBER = {349},
     PAGES = {2439--2472},
      ISSN = {0025-5718,1088-6842},
   MRCLASS = {65M12 (35A01 35A02 35R02 60H15 60H40)},
  MRNUMBER = {4759380},
      NOTE = {\url{https://doi.org/10.1090/mcom/3929}},
}

@article {arioli2018finite,
    AUTHOR = {Arioli, Mario and Benzi, Michele},
     TITLE = {A finite element method for quantum graphs},
   JOURNAL = {IMA J. Numer. Anal.},
  FJOURNAL = {IMA Journal of Numerical Analysis},
    VOLUME = {38},
      YEAR = {2018},
    NUMBER = {3},
     PAGES = {1119--1163},
      ISSN = {0272-4979,1464-3642},
   MRCLASS = {65N30 (05C10 35R02 81Q35)},
  MRNUMBER = {3829156},
      NOTE = {\url{https://doi.org/10.1093/imanum/drx029}},
       URL = {https://doi.org/10.1093/imanum/drx029},
}

@article {bolin2020rationalapprox,
    AUTHOR = {Bolin, David and Kirchner, Kristin},
     TITLE = {The rational {SPDE} approach for {G}aussian random fields with
              general smoothness},
   JOURNAL = {J. Comput. Graph. Statist.},
  FJOURNAL = {Journal of Computational and Graphical Statistics},
    VOLUME = {29},
      YEAR = {2020},
    NUMBER = {2},
     PAGES = {274--285},
      ISSN = {1061-8600,1537-2715},
   MRCLASS = {62M30 (60G60 60H15 60H35 62-08)},
  MRNUMBER = {4116041},
      NOTE = {\url{https://doi.org/10.1080/10618600.2019.1665537}},
}

@phdthesis{kups73182,
            year = {2024},
          author = {Anna Weller},
           title = {Numerical Methods for Parabolic Partial Differential Equations on Metric Graphs},
          school = {Universit{\"a}t zu K{\"o}ln},
      NOTE = {\url{https://kups.ub.uni-koeln.de/73182/}}
}

@article {bolin2024gaussian,
    AUTHOR = {Bolin, David and Simas, Alexandre B. and Wallin, Jonas},
     TITLE = {Gaussian {W}hittle-{M}at\'ern fields on metric graphs},
   JOURNAL = {Bernoulli},
  FJOURNAL = {Bernoulli. Official Journal of the Bernoulli Society for
              Mathematical Statistics and Probability},
    VOLUME = {30},
      YEAR = {2024},
    NUMBER = {2},
     PAGES = {1611--1639},
      ISSN = {1350-7265,1573-9759},
   MRCLASS = {60G60 (35J25 35R60 60G15 60G17 60H15)},
  MRNUMBER = {4699566},
      NOTE = {\url{https://doi.org/10.3150/23-bej1647}},
}

@book {berkolaiko2013introduction,
    AUTHOR = {Berkolaiko, Gregory and Kuchment, Peter},
     TITLE = {Introduction to quantum graphs},
    SERIES = {Mathematical Surveys and Monographs},
    VOLUME = {186},
 PUBLISHER = {American Mathematical Society, Providence, RI},
      YEAR = {2013},
     PAGES = {xiv+270},
      ISBN = {978-0-8218-9211-4},
   MRCLASS = {81Q35 (05C90 31C20 34B24 34B45 81Q50)},
  MRNUMBER = {3013208},
MRREVIEWER = {Delio\ Mugnolo},
      NOTE = {\url{https://doi.org/10.1090/surv/186}},
}

@article {Glusa2021errorestimates,
    AUTHOR = {Glusa, Christian and Ot\'arola, Enrique},
     TITLE = {Error estimates for the optimal control of a parabolic
              fractional {PDE}},
   JOURNAL = {SIAM J. Numer. Anal.},
  FJOURNAL = {SIAM Journal on Numerical Analysis},
    VOLUME = {59},
      YEAR = {2021},
    NUMBER = {2},
     PAGES = {1140--1165},
      ISSN = {0036-1429,1095-7170},
   MRCLASS = {49M41 (35R11 49J20 49K20 49M25 65M12 65M15 65M60)},
  MRNUMBER = {4250571},
MRREVIEWER = {Lino\ J.\ \'Alvarez-V\'azquez},
      NOTE = {\url{https://doi.org/10.1137/19M1267581}},
}

@article {bottcher2024dynamicalprocessesmetricnetworks,
    AUTHOR = {B\"ottcher, Lucas and Porter, Mason A.},
     TITLE = {Dynamical processes on metric networks},
   JOURNAL = {SIAM J. Appl. Dyn. Syst.},
  FJOURNAL = {SIAM Journal on Applied Dynamical Systems},
    VOLUME = {24},
      YEAR = {2025},
    NUMBER = {4},
     PAGES = {2848--2885},
      ISSN = {1536-0040},
   MRCLASS = {35R02 (05C82 37E99 81Q35)},
  MRNUMBER = {4973741},
      NOTE = {\url{https://doi.org/10.1137/24M1628153}},
       URL = {https://doi.org/10.1137/24M1628153},
}

@Manual{MetricGraphRpackage,
    title = {MetricGraph: Random fields on metric graphs},
    author = {David Bolin and Alexandre B. Simas and Jonas Wallin},
    year = {2023},
      NOTE = {R package version 1.3.0.9000. \url{https://CRAN.R-project.org/package=MetricGraph}},
  }

@Manual{rSPDERpackage,
    title = {rSPDE: Rational Approximations of Fractional Stochastic Partial Differential Equations},
    author = {David Bolin and Alexandre B. Simas},
    year = {2023},
      NOTE = {R package version 2.3.3. \url{https://CRAN.R-project.org/package=rSPDE}},
  }

@Manual{Rsoftware,
    title = {R: A Language and Environment for Statistical Computing},
    author = {{R Core Team}},
    organization = {R Foundation for Statistical Computing},
    address = {Vienna, Austria},
    year = {2023},
      NOTE = {\url{https://www.R-project.org/}},
  }

@incollection {Ainsworth2018Towards,
    AUTHOR = {Ainsworth, Mark and Glusa, Christian},
     TITLE = {Towards an efficient finite element method for the integral
              fractional {L}aplacian on polygonal domains},
 BOOKTITLE = {Contemporary computational mathematics---a celebration of the
              80th birthday of {I}an {S}loan. {V}ol. 1, 2},
     PAGES = {17--57},
 PUBLISHER = {Springer, Cham},
      YEAR = {2018},
      ISBN = {978-3-319-72455-3; 978-3-319-72456-0},
   MRCLASS = {65N30 (35R11)},
  MRNUMBER = {3822227},
      NOTE = {\url{https://doi.org/10.1007/978-3-319-72456-0_2}},
}

@article {Caffarelli2007Extension,
    AUTHOR = {Caffarelli, Luis and Silvestre, Luis},
     TITLE = {An extension problem related to the fractional {L}aplacian},
   JOURNAL = {Comm. Partial Differential Equations},
  FJOURNAL = {Communications in Partial Differential Equations},
    VOLUME = {32},
      YEAR = {2007},
    NUMBER = {7-9},
     PAGES = {1245--1260},
      ISSN = {0360-5302,1532-4133},
   MRCLASS = {35J70},
  MRNUMBER = {2354493},
MRREVIEWER = {Francesco\ Petitta},
      NOTE = {\url{https://doi.org/10.1080/03605300600987306}},
}

@article {Bonito2018Numericalmethods,
    AUTHOR = {Bonito, Andrea and Borthagaray, Juan Pablo and Nochetto,
              Ricardo H. and Ot\'arola, Enrique and Salgado, Abner J.},
     TITLE = {Numerical methods for fractional diffusion},
   JOURNAL = {Comput. Vis. Sci.},
  FJOURNAL = {Computing and Visualization in Science},
    VOLUME = {19},
      YEAR = {2018},
    NUMBER = {5-6},
     PAGES = {19--46},
      ISSN = {1432-9360,1433-0369},
   MRCLASS = {65N30 (26A33 65N15)},
  MRNUMBER = {3893441},
      NOTE = {\url{https://doi.org/10.1007/s00791-018-0289-y}},
}

@article {Nochetto2016APDEapproach,
    AUTHOR = {Nochetto, Ricardo H. and Ot\'arola, Enrique and Salgado, Abner
              J.},
     TITLE = {A {PDE} approach to space-time fractional parabolic problems},
   JOURNAL = {SIAM J. Numer. Anal.},
  FJOURNAL = {SIAM Journal on Numerical Analysis},
    VOLUME = {54},
      YEAR = {2016},
    NUMBER = {2},
     PAGES = {848--873},
      ISSN = {0036-1429,1095-7170},
   MRCLASS = {65M60 (26A33 35R11 65M12 65M15)},
  MRNUMBER = {3478958},
MRREVIEWER = {Mohammad\ Asadzadeh},
      NOTE = {\url{https://doi.org/10.1137/14096308X}},
}

@article {Antil2016Aspacetime,
    AUTHOR = {Antil, Harbir and Ot\'arola, Enrique and Salgado, Abner J.},
     TITLE = {A space-time fractional optimal control problem: analysis and
              discretization},
   JOURNAL = {SIAM J. Control Optim.},
  FJOURNAL = {SIAM Journal on Control and Optimization},
    VOLUME = {54},
      YEAR = {2016},
    NUMBER = {3},
     PAGES = {1295--1328},
      ISSN = {0363-0129,1095-7138},
   MRCLASS = {49M25 (26A33 35J70 49J20 65M12 65M15 65M60 65R10)},
  MRNUMBER = {3504977},
      NOTE = {\url{https://doi.org/10.1137/15M1014991}},
}

@article {Cordoni2017Stochasticreactiondiffusion,
    AUTHOR = {Cordoni, Francesco and Di Persio, Luca},
     TITLE = {Stochastic reaction-diffusion equations on networks with
              dynamic time-delayed boundary conditions},
   JOURNAL = {J. Math. Anal. Appl.},
  FJOURNAL = {Journal of Mathematical Analysis and Applications},
    VOLUME = {451},
      YEAR = {2017},
    NUMBER = {1},
     PAGES = {583--603},
      ISSN = {0022-247X,1096-0813},
   MRCLASS = {35R02 (35K57 35R60)},
  MRNUMBER = {3619253},
      NOTE = {\url{https://doi.org/10.1016/j.jmaa.2017.02.008}},
}

@article {Pastor2015Epidemicprocess,
    AUTHOR = {Pastor-Satorras, Romualdo and Castellano, Claudio and Van
              Mieghem, Piet and Vespignani, Alessandro},
     TITLE = {Epidemic processes in complex networks},
   JOURNAL = {Rev. Modern Phys.},
  FJOURNAL = {Reviews of Modern Physics},
    VOLUME = {87},
      YEAR = {2015},
    NUMBER = {3},
     PAGES = {925--979},
      ISSN = {0034-6861,1539-0756},
   MRCLASS = {94C99},
  MRNUMBER = {3406040},
      NOTE = {\url{https://doi.org/10.1103/RevModPhys.87.925}},
}

@article {Herty2010Anewmodelforgasflow,
    AUTHOR = {Herty, M. and Mohring, J. and Sachers, V.},
     TITLE = {A new model for gas flow in pipe networks},
   JOURNAL = {Math. Methods Appl. Sci.},
  FJOURNAL = {Mathematical Methods in the Applied Sciences},
    VOLUME = {33},
      YEAR = {2010},
    NUMBER = {7},
     PAGES = {845--855},
      ISSN = {0170-4214,1099-1476},
   MRCLASS = {76N15},
  MRNUMBER = {2662309},
      NOTE = {\url{https://doi.org/10.1002/mma.1197}},
}

@article {Du2018PDE,
    AUTHOR = {Du, Bo and Lian, Xiuguo and Cheng, Xiwang},
     TITLE = {Partial differential equation modeling with {D}irichlet
              boundary conditions on social networks},
   JOURNAL = {Bound. Value Probl.},
  FJOURNAL = {Boundary Value Problems},
      YEAR = {2018},
     PAGES = {Paper No. 50, 11},
      ISSN = {1687-2762,1687-2770},
   MRCLASS = {91D30 (35K20 35K57 35K58)},
  MRNUMBER = {3797149},
      NOTE = {\url{https://doi.org/10.1186/s13661-018-0964-4}},
}

@book {Barrat2008Dynamicalprocess,
    AUTHOR = {Barrat, Alain and {Barth\'elemy}, Marc and Vespignani,
              Alessandro},
     TITLE = {Dynamical processes on complex networks},
 PUBLISHER = {Cambridge University Press, Cambridge},
      YEAR = {2008},
     PAGES = {xvii+347},
      ISBN = {978-0-521-87950-7},
   MRCLASS = {05-02 (05C82 34C15 34D06 60K99 90B10 91D30)},
  MRNUMBER = {2797803},
      NOTE = {\url{https://doi.org/10.1017/CBO9780511791383}},
}

@article{Jivkov2014Anetworkmodel,
	author = {Jivkov, Andrey P. and Xiong, Qingrong},
	date = {2014/10/01},
      NOTE = {\url{https://doi.org/10.1007/s11242-014-0360-1}},
	id = {Jivkov2014},
	isbn = {1573-1634},
        journal = {Transp. Porous Media},
	fjournal = {Transport in Porous Media},
	number = {1},
	pages = {83--104},
	title = {A Network Model for Diffusion in Media with Partially Resolvable Pore Space Characteristics},
	volume = {105},
	year = {2014},
}

@article{Raj2012Anetworkdiffusionmodel,
	author = {Raj, Ashish and Kuceyeski, Amy and Weiner, Michael},
	journal = {Neuron},
        fjournal = {Neuron},
	month = {Mar},
	number = {6},
	pages = {1204--1215},
	title = {A network diffusion model of disease progression in dementia.},
	volume = {73},
       NOTE = {\url{https://doi.org/10.1016/j.neuron.2011.12.040}},
	year = {2012}
}

@article {Leugering2023OptimalControl,
    AUTHOR = {Leugering, G\"unter and Mophou, Gis\`ele and Moutamal, Maryse
              and Warma, Mahamadi},
     TITLE = {Optimal control problems of parabolic fractional
              {S}turm-{L}iouville equations in a star graph},
   JOURNAL = {Math. Control Relat. Fields},
  FJOURNAL = {Mathematical Control and Related Fields},
    VOLUME = {13},
      YEAR = {2023},
    NUMBER = {2},
     PAGES = {771--807},
      ISSN = {2156-8472,2156-8499},
   MRCLASS = {49J20 (26A33 35J20 49K20)},
  MRNUMBER = {4534588},
MRREVIEWER = {Xiaodong\ Yan},
      NOTE = {\url{https://doi.org/10.3934/mcrf.2022015}},
}

@article {Daoud2024Aclass,
    AUTHOR = {Daoud, Maha and Laamri, El-Haj and Baalal, Azeddine},
     TITLE = {A class of fractional parabolic reaction--diffusion systems
              with control of total mass: theory and numerics},
   JOURNAL = {J. Pseudo-Differ. Oper. Appl.},
  FJOURNAL = {Journal of Pseudo-Differential Operators and Applications},
    VOLUME = {15},
      YEAR = {2024},
    NUMBER = {1},
     PAGES = {Paper No. 18, 36},
      ISSN = {1662-9981,1662-999X},
   MRCLASS = {35R11 (35B45 35K59 47H10)},
  MRNUMBER = {4709430},
      NOTE = {\url{https://doi.org/10.1007/s11868-023-00576-w}},
}

@misc{Daoud2025classp,
      title={A class of parabolic reaction-diffusion systems governed by spectral fractional Laplacians : Analysis and numerical simulations}, 
      author={Maha Daoud},
      year={2025},
      eprint={2502.13771},
      archivePrefix={arXiv},
      primaryClass={math.AP},
      NOTE = {\url{https://arxiv.org/abs/2502.13771}},
}

@incollection {Vazquez2017Themathematical,
    AUTHOR = {V\'azquez, Juan Luis},
     TITLE = {The mathematical theories of diffusion: nonlinear and
              fractional diffusion},
 BOOKTITLE = {Nonlocal and nonlinear diffusions and interactions: new
              methods and directions},
    SERIES = {Lecture Notes in Math.},
    VOLUME = {2186},
     PAGES = {205--278},
 PUBLISHER = {Springer, Cham},
      YEAR = {2017},
      ISBN = {978-3-319-61493-9; 978-3-319-61494-6},
   MRCLASS = {35K57 (35R11)},
  MRNUMBER = {3588125},
MRREVIEWER = {Jeffrey\ R.\ Anderson},
      NOTE = {\url{https://doi.org/10.1007/978-3-319-61494-6_5}},
}

@article{Riascos2014Fractional,
  title = {Fractional dynamics on networks: Emergence of anomalous diffusion and L\'evy flights},
  author = {Riascos, A. P. and Mateos, Jos\'e L.},
  journal = {Phys. Rev. E},
  fjournal = {Physical Review E},
  volume = {90},
  issue = {3},
  pages = {032809},
  numpages = {7},
  year = {2014},
  month = {Sep},
  publisher = {American Physical Society},
      NOTE = {\url{https://doi.org/10.1103/PhysRevE.90.032809}},
}

@article {Miller2006Onthecontrollability,
    AUTHOR = {Miller, Luc},
     TITLE = {On the controllability of anomalous diffusions generated by
              the fractional {L}aplacian},
   JOURNAL = {Math. Control Signals Systems},
  FJOURNAL = {Mathematics of Control, Signals, and Systems},
    VOLUME = {18},
      YEAR = {2006},
    NUMBER = {3},
     PAGES = {260--271},
      ISSN = {0932-4194,1435-568X},
   MRCLASS = {93B05 (35F10 47D06)},
  MRNUMBER = {2272076},
MRREVIEWER = {Jun-Min\ Wang},
      NOTE = {\url{https://doi.org/10.1007/s00498-006-0003-3}},
}

@article {Henry2006Anomalous,
    AUTHOR = {Henry, B. I. and Langlands, T. A. M. and Wearne, S. L.},
     TITLE = {Anomalous diffusion with linear reaction dynamics: from
              continuous time random walks to fractional reaction-diffusion
              equations},
   JOURNAL = {Phys. Rev. E (3)},
  FJOURNAL = {Physical Review E. Statistical, Nonlinear, and Soft Matter
              Physics},
    VOLUME = {74},
      YEAR = {2006},
    NUMBER = {3},
     PAGES = {031116, 15},
      ISSN = {1539-3755,1550-2376},
   MRCLASS = {82C41 (35K57)},
  MRNUMBER = {2282122},
      NOTE = {\url{https://doi.org/10.1103/PhysRevE.74.031116}},
}

@article {Somathilake2018Aspacefractional,
    AUTHOR = {Somathilake, Lekam Watte and Burrage, Kevin},
     TITLE = {A space-fractional-reaction-diffusion model for pattern
              formation in coral reefs},
   JOURNAL = {Cogent Math. Stat.},
  FJOURNAL = {Cogent Mathematics \& Statistics},
    VOLUME = {5},
      YEAR = {2018},
    NUMBER = {1},
     PAGES = {Art. ID 1426524, 21},
      ISSN = {2574-2558},
   MRCLASS = {92C15 (35R11 37N25 65M70)},
  MRNUMBER = {3782923},
      NOTE = {\url{https://doi.org/10.1080/23311835.2018.1426524}},
}

@book {Danko2017Modelelements,
    AUTHOR = {Danko, George L.},
     TITLE = {Model elements and network solutions of heat, mass and
              momentum transport processes},
    SERIES = {Heat and Mass Transfer},
 PUBLISHER = {Springer-Verlag, Berlin},
      YEAR = {2017},
     PAGES = {xvii+251},
      ISBN = {978-3-662-52929-4; 978-3-662-52931-7},
   MRCLASS = {76-02 (80-02)},
  MRNUMBER = {3560892},
      NOTE = {\url{https://doi.org/10.1007/978-3-662-52931-7}},
}

@article {Metzler2000Therandomwalksguide,
    AUTHOR = {Metzler, Ralf and Klafter, Joseph},
     TITLE = {The random walk's guide to anomalous diffusion: a fractional
              dynamics approach},
   JOURNAL = {Phys. Rep.},
  FJOURNAL = {Physics Reports. A Review Section of Physics Letters},
    VOLUME = {339},
      YEAR = {2000},
    NUMBER = {1},
     PAGES = {77},
      ISSN = {0370-1573,1873-6270},
   MRCLASS = {82C31 (82C70)},
  MRNUMBER = {1809268},
      NOTE = {\url{https://doi.org/10.1016/S0370-1573(00)00070-3}},
}

@book {Evangelista2018Fractional,
    AUTHOR = {Evangelista, Luiz Roberto and Lenzi, Ervin Kaminski},
     TITLE = {Fractional diffusion equations and anomalous diffusion},
 PUBLISHER = {Cambridge University Press, Cambridge},
      YEAR = {2018},
     PAGES = {xiii+345},
      ISBN = {978-1-107-14355-5},
   MRCLASS = {35-02 (26A33 35R11 60G22 74-02 74N25)},
  MRNUMBER = {3793188},
MRREVIEWER = {Krzysztof\ Rogowski},
      NOTE = {\url{https://doi.org/10.1017/9781316534649}},
}

@book {Evans2010Partial,
    AUTHOR = {Evans, Lawrence C.},
     TITLE = {Partial differential equations},
    SERIES = {Graduate Studies in Mathematics},
    VOLUME = {19},
   EDITION = {Second},
 PUBLISHER = {American Mathematical Society, Providence, RI},
      YEAR = {2010},
     PAGES = {xxii+749},
      ISBN = {978-0-8218-4974-3},
   MRCLASS = {35-01},
  MRNUMBER = {2597943},
MRREVIEWER = {Diego\ M.\ Maldonado},
      NOTE = {\url{https://doi.org/10.1090/gsm/019}},
}

@book {Lions1972Nonhomogeneous,
    AUTHOR = {Lions, J.-L. and Magenes, E.},
     TITLE = {Non-homogeneous boundary value problems and applications.
              {V}ol. {I}},
    SERIES = {Die Grundlehren der mathematischen Wissenschaften},
    VOLUME = {Band 181},
 PUBLISHER = {Springer-Verlag, New York-Heidelberg},
      YEAR = {1972},
     PAGES = {xvi+357},
   MRCLASS = {35JXX (35KXX 35LXX 46E35)},
  MRNUMBER = {350177},
      NOTE = {Translated from the French by P. Kenneth. \url{https://doi.org/10.1007/978-3-642-65161-8}},
}

@book {Lunardi2018Interpolation,
    AUTHOR = {Lunardi, Alessandra},
     TITLE = {Interpolation theory},
    SERIES = {Appunti. Scuola Normale Superiore di Pisa (Nuova Serie)
              [Lecture Notes. Scuola Normale Superiore di Pisa (New
              Series)]},
    VOLUME = {16},
   EDITION = {Third},
 PUBLISHER = {Edizioni della Normale, Pisa},
      YEAR = {2018},
     PAGES = {xiv+199},
      ISBN = {978-88-7642-639-1; 978-88-7642-638-4},
   MRCLASS = {46M35 (46-02 46B70 47D06 47F05)},
  MRNUMBER = {3753604},
      NOTE = {\url{https://doi.org/10.1007/978-88-7642-638-4}},
}

@article {Faheem2023Acollocation,
    AUTHOR = {Faheem, Mo and Khan, Arshad},
     TITLE = {A collocation method for time-fractional diffusion equation on
              a metric star graph with {$\eta$} edges},
   JOURNAL = {Math. Methods Appl. Sci.},
  FJOURNAL = {Mathematical Methods in the Applied Sciences},
    VOLUME = {46},
      YEAR = {2023},
    NUMBER = {8},
     PAGES = {8895--8914},
      ISSN = {0170-4214,1099-1476},
   MRCLASS = {65T60 (35A22 35R11)},
  MRNUMBER = {4589847},
MRREVIEWER = {Libo\ Feng},
      NOTE = {\url{https://doi.org/10.1002/mma.9023}},
       URL = {https://doi.org/10.1002/mma.9023},
}

@article {Kumari2025Finite,
    AUTHOR = {Kumari, Shweta and Mehra, Mani and Mehandiratta, Vaibhav},
     TITLE = {Finite difference approximation of time-fractional
              advection-diffusion equation on a metric star graph},
   JOURNAL = {Int. J. Comput. Math.},
  FJOURNAL = {International Journal of Computer Mathematics},
    VOLUME = {102},
      YEAR = {2025},
    NUMBER = {12},
     PAGES = {2032--2050},
      ISSN = {0020-7160,1029-0265},
   MRCLASS = {65M06},
  MRNUMBER = {4996704},
      NOTE = {\url{https://doi.org/10.1080/00207160.2025.2531521}},
       URL = {https://doi.org/10.1080/00207160.2025.2531521},
}

@article {Goloshchapova2021Anonlinear,
    AUTHOR = {Goloshchapova, Nataliia},
     TITLE = {A nonlinear {K}lein-{G}ordon equation on a star graph},
   JOURNAL = {Math. Nachr.},
  FJOURNAL = {Mathematische Nachrichten},
    VOLUME = {294},
      YEAR = {2021},
    NUMBER = {9},
     PAGES = {1742--1764},
      ISSN = {0025-584X,1522-2616},
   MRCLASS = {35Q40},
  MRNUMBER = {4333890},
      NOTE = {\url{https://doi.org/10.1002/mana.201900526}},
       URL = {https://doi.org/10.1002/mana.201900526},
}

@article {Dutykh2018Wave,
    AUTHOR = {Dutykh, Denys and Caputo, Jean-Guy},
     TITLE = {Wave dynamics on networks: method and application to the
              sine-{G}ordon equation},
   JOURNAL = {Appl. Numer. Math.},
  FJOURNAL = {Applied Numerical Mathematics. An IMACS Journal},
    VOLUME = {131},
      YEAR = {2018},
     PAGES = {54--71},
      ISSN = {0168-9274,1873-5460},
   MRCLASS = {65M06 (05C82 35L71 35R02)},
  MRNUMBER = {3807170},
      NOTE = {\url{https://doi.org/10.1016/j.apnum.2018.03.010}},
       URL = {https://doi.org/10.1016/j.apnum.2018.03.010},
}

@article {Goodman2025QGlAB,
    AUTHOR = {Goodman, Roy H. and Conte, Grace and Marzuola, Jeremy L.},
     TITLE = {Q{GLAB}: a {MATLAB} package for computations on quantum
              graphs},
   JOURNAL = {SIAM J. Sci. Comput.},
  FJOURNAL = {SIAM Journal on Scientific Computing},
    VOLUME = {47},
      YEAR = {2025},
    NUMBER = {2},
     PAGES = {B428--B453},
      ISSN = {1064-8275,1095-7197},
   MRCLASS = {65Y15 (35-02 35J10 35P05 35Q55 65M70 81Q35)},
  MRNUMBER = {4885029},
      NOTE = {\url{https://doi.org/10.1137/23M1627729}},
       URL = {https://doi.org/10.1137/23M1627729},
}

@article {Goodman2019NLSbifurcations,
    AUTHOR = {Goodman, Roy H.},
     TITLE = {N{LS} bifurcations on the bowtie combinatorial graph and the
              dumbbell metric graph},
   JOURNAL = {Discrete Contin. Dyn. Syst.},
  FJOURNAL = {Discrete and Continuous Dynamical Systems},
    VOLUME = {39},
      YEAR = {2019},
    NUMBER = {4},
     PAGES = {2203--2232},
      ISSN = {1078-0947,1553-5231},
   MRCLASS = {05C90 (81Q05)},
  MRNUMBER = {3927510},
      NOTE = {\url{https://doi.org/10.3934/dcds.2019093}},
       URL = {https://doi.org/10.3934/dcds.2019093},
}

@article {Kovacs2021Stochastics,
    AUTHOR = {Kov\'acs, M. and Sikolya, E.},
     TITLE = {Stochastic reaction-diffusion equations on networks},
   JOURNAL = {J. Evol. Equ.},
  FJOURNAL = {Journal of Evolution Equations},
    VOLUME = {21},
      YEAR = {2021},
    NUMBER = {4},
     PAGES = {4213--4260},
      ISSN = {1424-3199,1424-3202},
   MRCLASS = {60H15 (35R02 35R60 47D06)},
  MRNUMBER = {4350573},
      NOTE = {\url{https://doi.org/10.1007/s00028-021-00719-w}},
       URL = {https://doi.org/10.1007/s00028-021-00719-w},
}

@article {Mehandiratta2021Optimalcontrol,
    AUTHOR = {Mehandiratta, Vaibhav and Mehra, Mani and Leugering, Gunter},
     TITLE = {Optimal control problems driven by time-fractional diffusion
              equations on metric graphs: optimality system and finite
              difference approximation},
   JOURNAL = {SIAM J. Control Optim.},
  FJOURNAL = {SIAM Journal on Control and Optimization},
    VOLUME = {59},
      YEAR = {2021},
    NUMBER = {6},
     PAGES = {4216--4242},
      ISSN = {0363-0129,1095-7138},
   MRCLASS = {49K20 (26A33 35R11 49J20 49M41 65M06 93C20)},
  MRNUMBER = {4334537},
      NOTE = {\url{https://doi.org/10.1137/20M1340332}},
}

@article {Bonito2017Theapproximation,
    AUTHOR = {Bonito, Andrea and Lei, Wenyu and Pasciak, Joseph E.},
     TITLE = {The approximation of parabolic equations involving fractional
              powers of elliptic operators},
   JOURNAL = {J. Comput. Appl. Math.},
  FJOURNAL = {Journal of Computational and Applied Mathematics},
    VOLUME = {315},
      YEAR = {2017},
     PAGES = {32--48},
      ISSN = {0377-0427,1879-1778},
   MRCLASS = {65M60 (35R11 65M12)},
  MRNUMBER = {3583668},
MRREVIEWER = {JiChun\ Li},
      NOTE = {\url{https://doi.org/10.1016/j.cam.2016.10.016}},
       URL = {https://doi.org/10.1016/j.cam.2016.10.016},
}
\end{document}